\providecommand{\U}[1]{\protect\rule{.1in}{.1in}}
\newtheorem{theorem}{Theorem}
\newtheorem{acknowledgement}[theorem]{Acknowledgement}
\newtheorem{definition}[theorem]{Definition}
\newtheorem{example}[theorem]{Example}
\newtheorem{lemma}[theorem]{Lemma}
\newtheorem{proposition}[theorem]{Proposition}
\newtheorem{remark}[theorem]{Remark}
\numberwithin{equation}{section}
\begin{document}
\title{Admissibility for Quasiregular Representations of Exponential Solvable Lie Groups}

\author{Vignon Oussa\\
Bridgewater State University\\
Bridgewater, MA 02325 U.S.A.\\
E-mail: vignon.oussa@bridgew.edu}

\date{}

\maketitle


\renewcommand{\thefootnote}{}

\footnote{2010 \emph{Mathematics Subject Classification}: Primary 22E27; Secondary 22E30.}

\footnote{\emph{Key words and phrases}: admissibility, representation, solvable, Lie group.}

\renewcommand{\thefootnote}{\arabic{footnote}}
\setcounter{footnote}{0}


\begin{abstract}
Let $N$ be a simply connected, connected non-commutative nilpotent Lie group
with Lie algebra $\mathfrak{n}$ of dimension $n.$ Let $H$ be a subgroup of the
automorphism group of $N.$ Assume that $H$
is a commutative, simply connected, connected Lie group with Lie algebra $\mathfrak{h}.$ Furthermore, let us
assume that the linear adjoint action of $\mathfrak{h}$ on $\mathfrak{n}$ is
diagonalizable with non-purely imaginary eigenvalues. Let $\tau=\mathrm{Ind}%
_{H}^{N\rtimes H} 1$. We obtain an explicit direct integral decomposition for
$\tau$, including a description of the spectrum as a sub-manifold of
$(\mathfrak{n}+\mathfrak{h})^{\ast}$, a formula for the multiplicity
function of the unitary irreducible representations occurring in the direct
integral, and a precise intertwining operator. Finally, we completely settle
the admissibility question of $\tau$. In fact, we show that if $G=N\rtimes H$
is unimodular, then $\tau$ is never admissible, and if $G$ is nonunimodular,
$\tau$ is admissible if and only if the intersection of $H$ and the center of
$G$ is equal to the identity of the group. The motivation of this work is to contribute to the general theory of admissibility, and also to shed some light on the existence of continuous wavelets on non-commutative and connected nilpotent Lie groups. 
\end{abstract}
\maketitle







\begin{acknowledgement}
Thanks go to Professor Bradley Currey who introduced me to the theory of admissibility. Without his support, and guidance this work would have never been possible. I also thank my wife Lindsay and my three children Senami, Kemi and Donah for their support.
\end{acknowledgement}

\section{Introduction}

Let $\pi$ be a unitary representation of a locally compact group $X,$ acting
in some Hilbert space $\mathcal{H}.$ We say that $\pi$ is admissible, if and only if
there exists some function $\phi\in \mathcal{H}$ such that the operator $W_{\phi}$
defines an isometry on $\mathcal{H},$ and $W_{\phi}:\mathcal{H}\rightarrow L^{2}\left(  X\right)
,$ $W_{\phi}\psi\left(  x\right)  =\left\langle \psi,\pi\left(  x\right)
\phi\right\rangle .$ For continuous wavelets on the real line, the admissibility of the quasiregular representation $\mathrm{Ind}_{\left(  0,\infty\right)  }%
^{\mathbb{R}\rtimes\left(  0,\infty\right)  }1$ of the `ax+b' group which is a unitary representation acting in $L^2(\mathbb{R})$ leads to the well-known Calderon condition. 

Given any locally compact group, a great deal is already known about the
admissibility of its left regular representation \cite {Fuhr}. For example, it is known
that the left regular representation of the `ax+b' group is admissible. The
left regular representation of $\mathbb{R}\rtimes\left(  0,\infty\right)  $
admits a decomposition into a direct sum of two unitary irreducible
representations acting in $L^{2}((0,\infty))$; each with infinite
multiplicities. Thus, the Plancherel measure of this affine group, is supported
on $2$ points. It is also known that the quasiregular representation
$\mathrm{Ind}_{\left(  0,\infty\right)  }^{\mathbb{R}\rtimes\left(
0,\infty\right)  }1$ is unitarily equivalent with a subrepresentation of the
left regular representation, and thus, is admissible.

Several authors have studied the admissibility of various representations; see
\cite{Ali}, and also \cite{Guido}, where Guido Weiss and his collaborators
obtained an almost complete characterization of groups of the type $H\leq
GL(n,\mathbb{R})$ for which the quasiregular representation $\tau
=\mathrm{Ind}_{H}^{\mathbb{R}^{n}\rtimes H}1$ is admissible. It is known that
if $\tau$ is admissible then the stabilizer subgroup of the action of $H$ on
characters belonging to the unitary dual of $\mathbb{R}^{n}$ must be compact
almost everywhere. However, this condition is not sufficient to guarantee the
admissibility of $\tau$. In \cite{Fuhr3}, a complete characterization of
dilation groups $H\leq GL(n,\mathbb{R})$ is given. On non-commutative
nilpotent domains, Liu and Peng answered the question for $\tau=\mathrm{Ind}%
_{H}^{N\rtimes H}1$, where $N$ is the Heisenberg group, and $H$ is a
1-parameter dilation group. They have also constructed some explicit continuous
wavelets on the Heisenberg group (see \cite{Liu}). In 2007, Currey considered
$\tau=\mathrm{Ind}_{H}^{N\rtimes H}1$, where $N$ is a connected, simply
connected non commutative nilpotent Lie group, and $H$ is a commutative,
connected, simply connected Lie group such that $G=N\rtimes H$ is completely
solvable and $\mathbb{R}$-split. He settled the admissibility question for
$\tau$ under the restriction that the stabilizer subgroup inside $H$ is
trivial, and he also gave some explicit construction of some continuous
wavelets (see \cite{Currey 4}). However, he did not address the case, where
the stabilizer of the action of $H$ on the unitary dual of $N$ is non trivial;
leaving this problem open. In 2011, we provided some answers for the
admissibility of monomial representations for completely solvable exponential
Lie groups in \cite{CV}. We now know that when $N$ is not commutative, the stabilizer of the action of $H$ on the dual of $N$ does not have to be trivial in order for $\tau$ to be admissible. We remark that such fact is always false if $N$ is commutative. Also, we were recently informed that new results on the subject of admissibility were obtained by Cordero, and Tabacco in \cite{Cordero}, and Filippo De Mari and Ernesto De Vito in \cite{Mari} for a different class of groups.

The purpose of this paper is to extend the results of Currey \cite{Currey}. Firstly, we make no assumption that the little group inside $H$ is trivial. Secondly, the class of groups considered in this paper is larger than the class considered by Currey. This class of groups also contains exponential solvable Lie groups which
are not completely solvable. We consider the situation where the
action of $\mathfrak{h}$ on $\mathfrak{n}$ has roots of the type $\alpha+i\beta$, with $\alpha\not=0.$ Let us be more precise. Let $N$ be a simply connected, connected non-commutative nilpotent Lie group
with real Lie algebra $\mathfrak{n}.$ Let $H$ be a subgroup of the
automorphism group of $N$, which we denote by $\mathrm{Aut}\left(  N\right)
.$ Assume that $H$ is isomorphic to $%
\mathbb{R}
^{r}$ with Lie algebra $\mathfrak{h}.$ Furthermore, let us assume that the
linear adjoint action of $\mathfrak{h}$ on $\mathfrak{n}$ is diagonalizable
with non-purely imaginary complex eigenvalues. We form the semi-direct
product Lie group $G=N\rtimes H$ such that $G$ is an exponential solvable Lie
group with Lie algebra $\mathfrak{g}.$ More precisely, there exist basis
elements such that $\mathfrak{h}=\mathbb{R}A_{1}\oplus\cdots\oplus
\mathfrak{\mathbb{R}}A_{r},$ and basis elements $Z_{i}$ for the
complexification of $\mathfrak{n}$ such that $Z_{i}$ are eigenvectors for the
linear operator $adA_{k},k=1,\cdots,r.$ Furthermore, we have $ad A_{k}%
Z_{j}=\left[  A_{k},Z_{j}\right]  =\gamma_{j}\left(  A_{k}\right)  Z_{j}$ with
weight $\gamma_{j}\left(  A_{k}\right)  =\lambda\left(  A_{k}\right)  \left(
1+i\alpha_{j}\right)  ,$ $\lambda\in\mathfrak{h}^{\ast},\text{ a real-valued
linear functional, and }\alpha_{j}\in\mathbb{R}.$ $G$ is an exponential
solvable Lie group, and is therefore type $\mathrm{I}$. We define the action of $H$ on $N$ multiplicatively, and
the multiplication law for $G$ is obtained as follows: $\left(  n,h\right)
\left(  n^{\prime},h^{\prime}\right)  =\left(  nh\cdot n^{\prime},hh^{\prime
}\right)  .$ The Haar measure of $G$ is $\left\vert \det Ad\left(  h\right)
\right\vert ^{-1}dndh$, where $dn,dh$ are the canonical Haar measures on $N,H$
respectively. We will denote by $L$ the left regular representation of $G$
acting in $L^{2}\left(  G\right)  .$ We consider the quasiregular
representation $\tau=\mathrm{Ind}_{H}^{G}\left(  1\right)  $ acting in
$L^{2}\left(  N\right)  $ as follows%
\begin{align*}
\tau\left(  n,1\right)  f\left(  m\right)   &  =f\left(  n^{-1}m\right)  \\
\tau\left(  1,h\right)  f\left(  m\right)   &  =\left\vert \det\left(
Ad\left(  h\right)  \right)  \right\vert ^{-1/2}f\left(  h^{-1}m\right)  .
\end{align*}

In this paper, mainly motivated by the admissibility question of $\tau$, we
aim to obtain an explicit decomposition of $\tau$, including a precise
description of its spectrum, an explicit formula for the multiplicity
function, the measure occurring in the decomposition of $\tau$, and finally,
we completely settle the admissibility question for $\tau.$ Here is the main
result of our paper.

\begin{theorem}
Let $N$ be a simply connected, connected non commutative nilpotent Lie group
with Lie algebra $\mathfrak{n}$ of dimension $n.$ Let $H$ be a subgroup of the
automorphism group of $N$. Assume that $H$
is a commutative simply connected, connected Lie group with Lie algebra $\mathfrak{h}.$ Furthermore, let us
assume that the linear adjoint action of $\mathfrak{h}$ on $\mathfrak{n}$ is
diagonalizable with non-purely imaginary eigenvalues such that $N\rtimes H$ is
an exponential solvable Lie group. Let $\tau=\mathrm{Ind}_{H}^{N\rtimes H}1$.

\begin{enumerate}
\item Assume that $\dim(H\cap Z(G))=0$. $\tau$ is admissible if and only if
$N\rtimes H$ is nonunimodular.

\item Assuming that $\dim(H\cap Z(G)) \not =0$, $\tau$ is never admissible.
\end{enumerate}
\end{theorem}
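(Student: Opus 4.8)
The plan is to read off admissibility from the explicit Plancherel-type data of $\tau$ already obtained in the decomposition—its spectrum $\Sigma\subset(\mathfrak n+\mathfrak h)^{\ast}$, its multiplicity function $m$, and the measure $\mu$—and to compare these against the Plancherel decomposition of the left regular representation $L$ of $G$. Writing $L\cong\int^{\oplus}_{\widehat G}\sigma\otimes\overline{\mathcal H_\sigma}\,d\nu_G(\sigma)$ with Duflo--Moore (formal-degree) operators $C_\sigma$, the criterion for $\tau=\int^{\oplus}_\Sigma\sigma\,d\mu(\sigma)$ to possess an admissible vector (see \cite{Fuhr,CV}) splits into two regimes according to the modular function of $G$: when $G$ is nonunimodular the operators $C_\sigma$ are unbounded, and a single admissible $\phi$ can always be normalized provided $\mu\ll\nu_G$; when $G$ is unimodular one has $C_\sigma=\mathrm{Id}$, and the coefficient map $W_\phi$ is an isometry only if, in addition, $\int_{\Sigma}m(\sigma)\,d\nu_G(\sigma)<\infty$. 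The whole statement then follows by checking case by case which of these conditions hold.

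For part (2) I would argue directly, independently of the modular function. Since $\dim(H\cap Z(G))\neq0$, choose $0\neq X$ in the Lie algebra of $H\cap Z(G)$ and set $h_s=\exp(sX)$. Each $h_s$ acts trivially as an automorphism of $N$, so $\mathrm{Ad}(h_s)=\mathrm{Id}$ on $\mathfrak g$ and $|\det\mathrm{Ad}(h_s)|=1$; the defining formulas for $\tau$ then give $\tau(1,h_s)=\mathrm{Id}$. Consequently every coefficient function $W_\phi\psi(x)=\langle\psi,\tau(x)\phi\rangle$ is right-invariant under the subgroup $\{h_s\}\cong\mathbb R$, which is closed and noncompact because $H\cong\mathbb R^{r}$ is simply connected. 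Integrating $|W_\phi\psi|^{2}$ along this one-parameter fibre (Weil's formula) forces $\|W_\phi\psi\|_{L^{2}(G)}=\infty$ unless $W_\phi\psi\equiv0$, so no nonzero $\phi$ makes $W_\phi$ isometric and $\tau$ is never admissible.

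For part (1), assume $\dim(H\cap Z(G))=0$. First I would record the unimodularity dictionary: $G$ is unimodular precisely when $\mathrm{tr}\,(\mathrm{ad}\,A)|_{\mathfrak n}=\sum_j\mathrm{Re}\,\gamma_j(A)=0$ for all $A\in\mathfrak h$, i.e.\ when $|\det\mathrm{Ad}(h)|\equiv1$. In the \emph{nonunimodular} subcase, the hypothesis $\dim(H\cap Z(G))=0$ guarantees that the generic $H$-orbits in $\widehat N$ are free, so $\Sigma$ meets the generic coadjoint orbits of $G$ and $\mu\ll\nu_G$ with locally bounded density; since the Duflo--Moore operators are then unbounded, one normalizes $\phi$ through the intertwining operator produced in the decomposition (exactly as for the $ax+b$ group), and $\tau$ is admissible. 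In the \emph{unimodular} subcase $C_\sigma=\mathrm{Id}$, so admissibility reduces to finiteness of $\int_\Sigma m(\sigma)\,d\nu_G(\sigma)$; but $\Sigma$ is a positive-dimensional continuum of generic orbits carrying infinite Plancherel mass, whence the integral diverges and $\tau$ fails to be admissible. Together these give ``admissible if and only if $G$ is nonunimodular.''

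The main obstacle I anticipate is the nonunimodular construction together with the divergence claim in the unimodular subcase, both of which require the explicit Plancherel data $(\nu_G,C_\sigma)$ of the exponential solvable group $G$. Concretely, one must identify the Duflo--Moore operators along the continuous part of $\widehat G$ and verify that the normalization integral $\int_\Sigma \mathrm{tr}(T_\sigma^{\ast}C_\sigma T_\sigma)\,d\mu(\sigma)$ can be made equal to $\langle\psi,\psi\rangle$ by a square-integrable field $T_\sigma$ in the nonunimodular case, while showing $\int_\Sigma m\,d\nu_G=\infty$ in the unimodular case. Matching the $H$-orbit picture on $\widehat N$ with the coadjoint-orbit picture for $G$—so that the measure $\mu$ and multiplicity $m$ of $\tau$ are correctly compared with $\nu_G$—is the technical heart; once this bookkeeping is in place, the dichotomy is immediate.
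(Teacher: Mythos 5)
Your overall strategy---decompose $\tau$ and $L$ into direct integrals and invoke F\"uhr's criteria (subrepresentation containment in the nonunimodular case, finiteness of $\int_\Sigma m\,d\mu$ in the unimodular case)---is exactly the paper's. Your argument for part (2) is a genuinely different and more elementary route: since $H\cap Z(G)$ acts trivially on $N$ and $\mathrm{Ad}$ is trivial on it, $\tau$ kills a closed noncompact one-parameter subgroup, so every coefficient function is invariant along its right cosets and cannot lie in $L^2(G)$ unless it vanishes. That is correct and self-contained; the paper instead proves the stronger statement that the spectral measure of $\tau$ (supported over $(\mathfrak{k}/(\mathfrak{z}(\mathfrak{g})\cap\mathfrak{h}))^{\ast}$) is singular with respect to the Plancherel measure of $G$ (supported over $\mathfrak{k}^{\ast}$), so that $\tau$ is disjoint from $L$.

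Part (1), however, has a genuine gap. You assert that $\dim(H\cap Z(G))=0$ ``guarantees that the generic $H$-orbits in $\widehat N$ are free.'' This is false, and the failure is precisely the point of the paper. The generic stabilizer $K=\mathrm{Stab}_H(\lambda)$ can be a nontrivial, even positive-dimensional, subgroup of $H$ while $H\cap Z(G)$ is trivial: in the paper's Heisenberg example with $H\cong\mathbb{R}^2$ acting by $e^t,e^{t-r},e^r$, one has $K\cong\mathbb{R}$ yet $Z(G)\cap H=\{1\}$, and $\tau$ \emph{is} admissible. With $K\neq\{1\}$ your appeal to free orbits no longer produces $\mu\ll\nu_G$; what is actually needed is the analysis of the restriction $\gamma_\lambda$ of the extended representation $\widetilde\pi_\lambda$ to $K$ (the symplectic representation $\beta$ of $K$ on $\mathfrak{n}/\mathfrak{n}(\lambda)$, whose kernel is exactly $H\cap Z(G)$), showing that $\gamma_\lambda$ decomposes over all of $(\mathfrak{k}/(\mathfrak{z}(\mathfrak{g})\cap\mathfrak{h}))^{\ast}$ with a uniform multiplicity $\mathbf{m}$. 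Only then does one conclude that $\tau$'s spectral measure fills the fiber $\mathfrak{k}^{\ast}$ of the Plancherel measure exactly when $\mathfrak{z}(\mathfrak{g})\cap\mathfrak{h}=0$. Without this step your argument covers only the case $K=\{1\}$, which was already Currey's theorem. A secondary, smaller issue: in the unimodular subcase your claim that $\Sigma$ ``carries infinite Plancherel mass'' is not automatic ($\Sigma^\circ$ alone can have finite Pfaffian mass, e.g.\ $\Sigma^\circ=\{\pm1\}$ for the Heisenberg group); the paper's divergence argument splits on whether $\mathbf{m}=\infty$ (immediate) or $\mathbf{m}<\infty$, in which case $K$ and hence $\mathfrak{k}^{\ast}$ is nontrivial and the $\sigma$-independence of the Pfaffian forces $\int_{\Sigma^\circ}\int_{\mathfrak{k}^{\ast}}|\mathbf{Pf}_{\mathbf{e}}(\lambda)|\,d\lambda\,d\sigma=\infty$. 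You would need to supply both of these ingredients for the proof to go through.
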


\section{Preliminaries}

We recall that the coadjoint action of $G$ on $\mathfrak{g}^{\ast}$ is simply
the dual of the adjoint action, and is also defined multiplicatively as
$g\cdot l\left(  X\right)  =l\left(  Ad_{g^{-1}}X\right)  ,g\in G,X\in
\mathfrak{g}^{\ast}.$ In this paper, the group $G$ always stands for $N\rtimes H$ as described earlier.

\begin{definition}
Given 2 representations $\pi,\theta$ of $G$ acting in the Hilbert spaces
$\mathcal{H}_{\pi},\mathcal{\mathcal{H}}_{\theta}$ respectively, if there exists a bounded linear operator
$T:\mathcal{H}_{\pi}\rightarrow\mathcal{H}_{\theta}$ such that $\theta\left(  x\right)
T=T\pi\left(  x\right)  $ for all $x\in G,$ we say $T$ intertwines $\pi$ with
$\theta.$ If $T$ is a unitary operator, then we say the representations are
unitarily equivalent, we write $\pi\simeq\theta,$ and $[\pi]=[\theta]$.
\end{definition}

\begin{lemma} \label{lemma1} Let $L$ be the left regular representation of $G$ acting in
$L^{2}(G)$. $L$ is admissible if and only if $G$ is nonunimodular.
\end{lemma}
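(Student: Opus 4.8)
The plan is to derive the statement from the Plancherel theory of the type I group $G$ together with the Duflo--Moore orthogonality relations, which is essentially the content of the general admissibility criterion for regular representations in \cite{Fuhr}. First I would record the intertwining property of the wavelet operator: writing $W_{\phi}\psi(x)=\langle\psi,L(x)\phi\rangle$, a change of variables against left Haar measure shows that $W_{\phi}$ intertwines $L$ with itself, so that if $W_{\phi}$ is an isometry then $W_{\phi}^{\ast}W_{\phi}=\mathrm{Id}$ is equivalent to the reconstruction formula
\begin{equation}
\psi=\int_{G}\langle\psi,L(x)\phi\rangle\,L(x)\phi\,dx\qquad(\psi\in L^{2}(G)).
\end{equation}
Thus admissibility of $L$ is exactly the assertion that $\{L(x)\phi:x\in G\}$ is a continuous tight frame for all of $L^{2}(G)$. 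Since $G$ is exponential solvable it is type I, so I may pass to the Plancherel decomposition $L\cong\int_{\hat{G}}^{\oplus}\pi\otimes\overline{\mathcal{H}_{\pi}}\,d\mu(\pi)$, under which $L^{2}(G)$ is realized as the field of Hilbert--Schmidt operators $\int_{\hat{G}}^{\oplus}\mathrm{HS}(\mathcal{H}_{\pi})\,d\mu(\pi)$ and $\phi$ becomes a measurable field $(\phi_{\pi})$. I would also note at the outset that $G$, being a nontrivial connected Lie group, is never discrete; this rules out the exceptional discrete-unimodular case in which $\delta_{e}$ is trivially admissible, and is the reason the dichotomy reduces to unimodularity alone.

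The decisive ingredient is the family of Duflo--Moore operators $C_{\pi}$ attached to the Plancherel decomposition, the positive self-adjoint (generally unbounded) operators implementing the orthogonality relations fibrewise. The structural fact I would invoke is that $C_{\pi}$ is a scalar multiple of the identity for $\mu$-almost every $\pi$ precisely when $G$ is unimodular, and is a genuinely unbounded operator $\mu$-almost everywhere when $G$ is nonunimodular. Translating the reconstruction formula into the Plancherel picture, admissibility becomes a fibrewise normalization of $C_{\pi}^{-1}\phi_{\pi}$ together with the requirement $\int_{\hat{G}}\|\phi_{\pi}\|_{\mathrm{HS}}^{2}\,d\mu(\pi)<\infty$. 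In the unimodular case, where $C_{\pi}$ is a scalar, the fibrewise admissibility condition forces each $\|\phi_{\pi}\|_{\mathrm{HS}}^{2}$ to equal the formal dimension of $\pi$; but since $N$ is noncommutative nilpotent, $\dim\mathcal{H}_{\pi}=\infty$ on a set of positive Plancherel measure, so $\|\phi\|_{2}^{2}=\int_{\hat{G}}\|\phi_{\pi}\|_{\mathrm{HS}}^{2}\,d\mu(\pi)=\infty$, a contradiction. This is the abstract form of the elementary $\mathbb{R}^{n}$ computation in which admissibility forces $|\hat{\phi}|\equiv 1$ and hence $\phi\notin L^{2}$, and it shows that no nonzero $\phi\in L^{2}(G)$ can be admissible when $G$ is unimodular.

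For the converse I would exhibit an admissible vector directly from the Duflo--Moore operators: when $G$ is nonunimodular each $C_{\pi}$ is unbounded, so one may choose a measurable field $\phi_{\pi}$ of Hilbert--Schmidt operators realizing the fibrewise relation $\|C_{\pi}^{-1}\phi_{\pi}\|_{\mathrm{HS}}$ constant while keeping $\int_{\hat{G}}\|\phi_{\pi}\|_{\mathrm{HS}}^{2}\,d\mu(\pi)<\infty$, the unboundedness of $C_{\pi}$ being exactly what allows the norms $\|\phi_{\pi}\|_{\mathrm{HS}}^{2}$ to be absorbed into an integrable weight over $\hat{G}$. Verifying that the resulting $\phi$ satisfies the reconstruction formula then reduces to the orthogonality relations applied fibrewise. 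I expect the main obstacle to be precisely this converse construction: justifying the measurable selection of the field $(\phi_{\pi})$ and the interchange of the fibrewise isometry with the integral over $\hat{G}$, so that pointwise reconstruction on each fibre assembles into the global identity on $L^{2}(G)$. This is the step where nonunimodularity is indispensable, and the one for which I would lean most heavily on the abstract machinery of \cite{Fuhr}.
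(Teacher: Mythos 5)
Your proposal is correct and takes essentially the same route as the paper: the paper disposes of this lemma by citing F\"uhr's Theorem 4.23 in \cite{Fuhr} (valid for any connected type I group), and your sketch is precisely the Duflo--Moore/Plancherel argument underlying that theorem, with the same ultimate reliance on \cite{Fuhr} for the measurable-field construction in the nonunimodular direction. The only remark worth adding is that in the unimodular case the obstruction is already fibrewise---no Hilbert--Schmidt $\phi_{\pi}$ can satisfy $\phi_{\pi}^{\ast}\phi_{\pi}=c\,\mathrm{Id}_{\mathcal{H}_{\pi}}$ with $c>0$ once $\dim\mathcal{H}_{\pi}=\infty$ on a set of positive Plancherel measure---so the divergence of $\int_{\hat{G}}\|\phi_{\pi}\|_{\mathrm{HS}}^{2}\,d\mu(\pi)$ is not even needed there.
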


Lemma \ref{lemma1} was proved in more general terms by Hartmut F\"uhr
in Theorem 4.23 \cite{Fuhr}. In fact, the general statement of his proof only assumes that
$G$ is type I and connected.

\begin{lemma}\label{lem5}
If $G$ is nonunimodular, $\tau$ is admissible if and only if $\tau$ is
equivalent with a subrepresentation of $L.$
\end{lemma}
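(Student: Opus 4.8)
The plan is to exploit the single structural fact underlying all of wavelet analysis: for any unitary representation $\pi$ acting on a Hilbert space $\mathcal{H}$ and any $\phi\in\mathcal{H}$, the coefficient operator $W_{\phi}\psi(x)=\langle\psi,\pi(x)\phi\rangle$ intertwines $\pi$ with the left regular representation $L$. A direct computation gives $W_{\phi}(\pi(g)\psi)(x)=\langle\psi,\pi(g^{-1}x)\phi\rangle=(L(g)W_{\phi}\psi)(x)$, so that $W_{\phi}\circ\pi(g)=L(g)\circ W_{\phi}$ for every $g\in G$. I would record this at the outset, together with the standard observation that admissibility is invariant under unitary equivalence of representations.

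For the forward implication, suppose $\tau$ is admissible, so there is a $\phi$ with $W_{\phi}$ isometric. Being an isometric intertwiner, $W_{\phi}$ has closed range $\mathcal{R}=W_{\phi}(\mathcal{H})$, and the intertwining relation forces $L(g)\mathcal{R}=\mathcal{R}$ for all $g$; thus $\mathcal{R}$ is $L$-invariant and $W_{\phi}$ realizes a unitary equivalence of $\tau$ with the subrepresentation $L|_{\mathcal{R}}$. This direction uses nothing about unimodularity.

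The nonunimodularity hypothesis enters only in the converse. Suppose $\tau\cong L|_{\mathcal{M}}$ for some closed $L$-invariant subspace $\mathcal{M}\subseteq L^{2}(G)$. By Lemma \ref{lemma1}, nonunimodularity of $G$ guarantees that $L$ itself is admissible, so there is $\Phi\in L^{2}(G)$ with $W_{\Phi}\colon L^{2}(G)\to L^{2}(G)$ isometric. Let $P$ be the orthogonal projection onto $\mathcal{M}$. Since $\mathcal{M}$ is invariant under the unitaries $L(g)$, the projection $P$ is self-adjoint and commutes with every $L(g)$, which lets me transfer the admissible vector: setting $\phi_{0}=P\Phi\in\mathcal{M}$, for every $\psi\in\mathcal{M}$ one computes $\langle\psi,L(x)P\Phi\rangle=\langle\psi,PL(x)\Phi\rangle=\langle P\psi,L(x)\Phi\rangle=\langle\psi,L(x)\Phi\rangle$, so $W_{\phi_{0}}$ agrees with $W_{\Phi}$ on $\mathcal{M}$ and is therefore isometric there. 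Hence $\phi_{0}$ is an admissible vector for $L|_{\mathcal{M}}$, and by invariance under unitary equivalence $\tau$ is admissible.

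The argument is short once the intertwining identity is in place, and I do not anticipate a serious conceptual obstacle; the only point requiring genuine care is the bookkeeping with conventions — verifying that $W_{\phi}$ intertwines $\tau$ with $L$ (rather than with the right regular representation) and that the normalizing modulus $|\det Ad(h)|^{-1/2}$ built into the definitions of $\tau$ and $L$ is handled consistently, so that the identity $W_{\phi_{0}}|_{\mathcal{M}}=W_{\Phi}|_{\mathcal{M}}$ holds exactly rather than up to a stray modular factor.
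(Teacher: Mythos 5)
Your proof is correct and complete: the coefficient-operator intertwining identity gives the forward direction, and Lemma \ref{lemma1} together with the projection argument ($P$ commutes with $L(g)$, so $P\Phi$ is admissible for $L|_{\mathcal{M}}$) gives the converse. The paper itself supplies no proof of this lemma, deferring to page 126 of F\"uhr's monograph \cite{Fuhr}; your argument is precisely the standard one found there, so there is nothing to add.
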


\begin{lemma}
\label{irr} Let $\pi,\rho$ be two type I unitary representations of $G$ with
the following direct integral decomposition. $\pi\simeq\int_{\widehat{G}%
}^{\oplus} \sigma\otimes1_{\mathbb{C}^{m_{\pi}}} d\mu(\sigma)$, and
$\rho\simeq\int_{\widehat{G}}^{\oplus} \sigma\otimes1_{\mathbb{C}^{m^{\prime
}_{\rho}}} d\mu^{\prime}\sigma.$ $\pi$ is equivalent with a subrepresentation
of $\rho$ if and only if $\mu$ is absolutely continuous with $\mu^{\prime}$
and $m_{\pi}\leq m^{\prime}_{\rho}$ $\mu$ a.e.
\end{lemma}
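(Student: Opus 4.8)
The plan is to reduce everything to the uniqueness theorem for direct integral (central) decompositions over a type I group: two representations written as $\int_{\widehat{G}}^{\oplus}\sigma\otimes 1_{\mathbb{C}^{m(\sigma)}}\,d\mu(\sigma)$ and $\int_{\widehat{G}}^{\oplus}\sigma\otimes 1_{\mathbb{C}^{n(\sigma)}}\,d\nu(\sigma)$ are unitarily equivalent if and only if $\mu$ and $\nu$ lie in the same measure class and $m=n$ almost everywhere. This invariant-completeness statement for type I representations (see Dixmier, or F\"uhr \cite{Fuhr}) is precisely what makes the measure class together with the multiplicity function a complete unitary invariant, and both directions of the lemma will be built on top of it, together with a fiberwise description of invariant subspaces.

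For the forward direction, I would suppose $\pi$ is equivalent to a subrepresentation of $\rho$, so there is a closed $G$-invariant subspace $\mathcal{K}\subseteq\mathcal{H}_{\rho}$ with $\rho|_{\mathcal{K}}\simeq\pi$. Working in the direct integral model of $\rho$, a $G$-invariant subspace corresponds to a measurable field $\sigma\mapsto\mathcal{K}(\sigma)$ of invariant subspaces of the fibers $\mathcal{H}_{\sigma}\otimes\mathbb{C}^{m'_{\rho}(\sigma)}$. Because $\sigma$ is irreducible, the commutant of $\sigma\otimes 1$ is $1\otimes B(\mathbb{C}^{m'_{\rho}(\sigma)})$, so by Schur's lemma every invariant subspace of the fiber has the form $\mathcal{H}_{\sigma}\otimes W(\sigma)$ for some $W(\sigma)\subseteq\mathbb{C}^{m'_{\rho}(\sigma)}$. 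Hence $\rho|_{\mathcal{K}}\simeq\int^{\oplus}\sigma\otimes 1_{W(\sigma)}\,d\mu'(\sigma)$, a direct integral with measure class $[\mu'|_{E}]$, where $E=\{\sigma:\dim W(\sigma)>0\}$, and multiplicity function $\dim W(\sigma)$. Comparing with the decomposition of $\pi$ via the uniqueness theorem forces $[\mu]=[\mu'|_{E}]$, whence $\mu\ll\mu'$, and $m_{\pi}(\sigma)=\dim W(\sigma)\le m'_{\rho}(\sigma)$ for $\mu$-a.e. $\sigma$.

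For the converse, assuming $\mu\ll\mu'$ and $m_{\pi}\le m'_{\rho}$ $\mu$-a.e., I would write $d\mu=f\,d\mu'$ by Radon--Nikodym and set $E=\{f>0\}$; a short computation shows $\mu$ and $\mu'|_{E}$ are equivalent. Over $E$ I then select, measurably in $\sigma$, a subspace $W(\sigma)\subseteq\mathbb{C}^{m'_{\rho}(\sigma)}$ of dimension exactly $m_{\pi}(\sigma)$ (possible since $m_{\pi}\le m'_{\rho}$), and form the invariant subspace $\mathcal{K}=\int_{E}^{\oplus}\mathcal{H}_{\sigma}\otimes W(\sigma)\,d\mu'(\sigma)$ of $\mathcal{H}_{\rho}$. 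Then $\rho|_{\mathcal{K}}\simeq\int_{E}^{\oplus}\sigma\otimes 1_{W(\sigma)}\,d\mu'(\sigma)$ has measure class $[\mu'|_{E}]=[\mu]$ and multiplicity function $m_{\pi}$ $\mu$-a.e., so by the uniqueness theorem $\rho|_{\mathcal{K}}\simeq\pi$, exhibiting $\pi$ as a subrepresentation of $\rho$.

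I expect the measure-theoretic bookkeeping to be the delicate part rather than the representation theory: namely, justifying that invariant subspaces of a direct integral correspond to measurable fields of fiberwise invariant subspaces, and carrying out the measurable selection of $W(\sigma)$ of prescribed dimension so that $\sigma\mapsto\mathcal{H}_{\sigma}\otimes W(\sigma)$ is a genuine measurable field of subspaces. Both are standard in the theory of decomposable operators and measurable fields of Hilbert spaces, but they are where care is required; everything else is an application of Schur's lemma fiberwise and the uniqueness of the type I central decomposition.
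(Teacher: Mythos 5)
Your argument is correct, but it is worth noting that the paper itself offers no proof of this lemma at all: it simply refers the reader to page 126 of F\"uhr's monograph \cite{Fuhr}, where the containment criterion is stated and justified. What you have written out is essentially the standard proof of that cited fact. Both directions rest, as you say, on the uniqueness of the central decomposition of a type I representation (measure class plus multiplicity function as a complete invariant), and the one genuinely nontrivial ingredient in the forward direction is the identification of the commutant of $\int_{\widehat{G}}^{\oplus}\sigma\otimes 1_{\mathbb{C}^{m'_{\rho}(\sigma)}}\,d\mu'(\sigma)$ with the decomposable operators whose fibers lie in $1\otimes B(\mathbb{C}^{m'_{\rho}(\sigma)})$; this is valid precisely because the decomposition is over $\widehat{G}$ with almost all fibers irreducible and pairwise inequivalent, which is where the type I hypothesis enters. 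Your fiberwise Schur argument and the trivial measurable selection in the converse (e.g.\ taking $W(\sigma)$ to be the span of the first $m_{\pi}(\sigma)$ standard basis vectors) then close the argument. So the proposal buys the reader a self-contained proof where the paper buys only a citation; there is no gap.
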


A clear explanation of Lemma \ref{lem5} and Lemma \ref{irr} is given on Page 126 of the Monograph \cite{Fuhr}. The following theorem is due to Lipsman, and the proof is in Theorem $7.1$
in \cite{Lipsman}.

\begin{lemma}
\label{Ronald} Let $G=N\rtimes H$ be a semi-direct product of locally compact
groups, $N$ normal and type I. Let $\gamma\in\widehat{N},$ $H_{\gamma}$ the
stability group. Let $\widetilde{\gamma}$ be any extension of $\gamma$ to
$H_{\gamma}.$ Suppose that $N$ is unimodular, $\widehat{N}/H$ is countably
separated and $\widetilde{\gamma}$ is a type I representation for $\mu_{N}$
almost everywhere $\gamma\in\widehat{N}$. Let
\[
\widetilde{\gamma}\simeq\int_{\widehat{H_{\gamma}}}^{\oplus}n_{\gamma}\left(
\sigma\right)  \sigma d\mu_{\gamma}\left(  \sigma\right)
\]
be the unique direct integral decomposition of $\widetilde{\gamma}.$ Then
\[
\mathrm{Ind}_{H}^{G}1\simeq\int_{\widehat{N}/H}^{\oplus}\int_{\widehat
{H_{\gamma}}}^{\oplus}\pi_{\gamma,\sigma}\otimes1_{%
\mathbb{C}
^{n_{\gamma}\left(  \sigma\right)  }}d\mu_{\gamma}\left(  \sigma\right)
d\overset{\cdot}{\mu}_{N}\left(  \gamma\right)  ,
\]
where $\overset{\cdot}{\mu}_{N}$is the push-forward of the Plancherel measure
on $\mu_{N}$ on $\widehat{N}.$
\end{lemma}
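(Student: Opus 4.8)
The plan is to prove this decomposition by combining the Plancherel theorem for the type I unimodular group $N$ with Mackey's normal subgroup analysis, organized fiberwise over the orbit space $\widehat{N}/H$. The starting point is that $\tau=\mathrm{Ind}_{H}^{G}1$ is realized on $L^{2}(N)$ (because $G/H\cong N$ as measure spaces) in such a way that its restriction to the normal subgroup is the left regular representation $\lambda_{N}$ of $N$: applying Mackey's restriction theorem to the single double coset $H\backslash G/N$, together with $N\cap H=\{e\}$, gives $\tau|_{N}\simeq\mathrm{Ind}_{\{e\}}^{N}1=\lambda_{N}$. This reduces the problem to tracking how the $H$-action, implemented by the operators $\tau(1,h)$, interacts with the Plancherel picture of $\lambda_{N}$.

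First I would apply the abstract Plancherel theorem to $\lambda_{N}$: since $N$ is type I and unimodular, $L^{2}(N)\simeq\int_{\widehat{N}}^{\oplus}\mathcal{H}_{\gamma}\otimes\overline{\mathcal{H}_{\gamma}}\,d\mu_{N}(\gamma)$ with $\lambda_{N}\simeq\int_{\widehat{N}}^{\oplus}\gamma\otimes 1\,d\mu_{N}(\gamma)$, the multiplicity being carried by the conjugate factor $\overline{\mathcal{H}_{\gamma}}$ on which $N$ acts trivially. The operators $\tau(1,h)$ do not fix the individual fibers; they implement the dual action of $H$ on $\widehat{N}$, carrying the fiber over $\gamma$ to that over $h\cdot\gamma$ while introducing the cocycle coming from the Haar-measure factor $\abs{\det Ad(h)}$. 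Because $\widehat{N}/H$ is countably separated by hypothesis, the orbit space is a standard Borel space with a measurable cross-section, so I can disintegrate $\mu_{N}$ as $\int_{\widehat{N}/H}\nu_{\gamma}\,d\dot{\mu}_{N}(\gamma)$ with each $\nu_{\gamma}$ concentrated on the orbit $H\cdot\gamma$ and $\dot{\mu}_{N}$ the push-forward. This regroups the Plancherel integral of $\tau$ into an integral over $\widehat{N}/H$ of $G$-invariant ``orbital'' subrepresentations.

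Next I would identify each orbital subrepresentation. Fixing $\gamma$, the orbit $H\cdot\gamma\cong H/H_{\gamma}$ is a transitive $H$-space, so the part of $\tau$ living over this orbit carries a system of imprimitivity based on $H/H_{\gamma}$; by the imprimitivity theorem it is unitarily equivalent to a representation induced from the little group $G_{\gamma}=N\rtimes H_{\gamma}$. Concretely this piece is $\mathrm{Ind}_{G_{\gamma}}^{G}\bigl(\widetilde{\gamma}\otimes(\theta_{\gamma}\circ p)\bigr)$, where $\widetilde{\gamma}$ is the chosen extension of $\gamma$ to $G_{\gamma}$ on $\mathcal{H}_{\gamma}$, $p:G_{\gamma}\to H_{\gamma}$ is the projection, and $\theta_{\gamma}$ is the representation of $H_{\gamma}$ carried by the multiplicity space $\overline{\mathcal{H}_{\gamma}}$, whose decomposition is the one recorded as $\widetilde{\gamma}\simeq\int_{\widehat{H_{\gamma}}}^{\oplus}n_{\gamma}(\sigma)\sigma\,d\mu_{\gamma}(\sigma)$ in the statement. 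Since induction commutes with direct integrals, each orbital piece becomes $\int_{\widehat{H_{\gamma}}}^{\oplus}\pi_{\gamma,\sigma}\otimes 1_{\mathbb{C}^{n_{\gamma}(\sigma)}}\,d\mu_{\gamma}(\sigma)$, where $\pi_{\gamma,\sigma}=\mathrm{Ind}_{G_{\gamma}}^{G}(\widetilde{\gamma}\otimes\sigma)$ is exactly Mackey's parametrization of the part of $\widehat{G}$ lying over the orbit. Reassembling over $\widehat{N}/H$ yields the stated formula, and uniqueness of the disintegration (the type I hypothesis on $\widetilde{\gamma}$, together with Lemma \ref{irr}) makes $\dot{\mu}_{N}$, the fiber measures $\mu_{\gamma}$, and the multiplicities $n_{\gamma}(\sigma)$ essentially unique.

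The hard part will be the measurability and gluing: one must verify that the extensions $\widetilde{\gamma}$, the little groups $H_{\gamma}$, the fiber Hilbert spaces, and the entire field of decompositions $(\mu_{\gamma},n_{\gamma}(\cdot))$ can be chosen to depend measurably on $\gamma\in\widehat{N}/H$, so that the iterated direct integral is well defined. This is precisely where the standing hypotheses enter: $N$ type I and unimodular supplies a unique Plancherel disintegration, $\widehat{N}/H$ countably separated provides a measurable cross-section for the orbit space, and $\widetilde{\gamma}$ type I almost everywhere makes each little-group decomposition unique and measurably selectable. Establishing the existence of a measurable choice of the extensions $\widetilde{\gamma}$ (equivalently the Borel triviality of the Mackey obstruction cocycle on the semidirect product) and the measurable dependence of the imprimitivity identification on the orbit is the technical crux; the rest is the routine theory of measurable fields of Hilbert spaces and representations, as carried out in \cite{Lipsman} and \cite{Fuhr}.
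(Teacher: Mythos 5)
The paper offers no internal proof of this lemma to compare against: it is quoted from Lipsman, with the proof deferred to Theorem 7.1 of \cite{Lipsman}. Your sketch reconstructs exactly the standard Mackey-machine argument underlying that theorem --- $\tau|_{N}\simeq\lambda_{N}$ via the subgroup theorem, Plancherel disintegration of $\lambda_{N}$, regrouping of the Plancherel measure over $\widehat{N}/H$ (where countable separability enters), identification of each orbital piece by the imprimitivity theorem as induced from $N\rtimes H_{\gamma}$, and commutation of induction with direct integrals --- and it is correct in outline, with the genuine technical burden (measurable fields of extensions, stabilizers, and decompositions) correctly identified and deferred, just as the paper itself defers everything to the reference. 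One bookkeeping caveat: the representation $\theta_{\gamma}$ of $H_{\gamma}$ carried by the multiplicity space $\overline{\mathcal{H}_{\gamma}}$ is the \emph{conjugate} of $\widetilde{\gamma}|_{H_{\gamma}}$, not $\widetilde{\gamma}|_{H_{\gamma}}$ itself, so each orbital piece decomposes against the conjugate irreducibles $\overline{\sigma}$ with multiplicities $n_{\gamma}(\sigma)$; this is absorbed into the convention defining $\pi_{\gamma,\sigma}$, and it is precisely why the paper's own applications (Lemma \ref{lemma27} and the theorem on the decomposition of $\tau$) carry the characters $\chi_{\overline{\sigma}}$ rather than $\chi_{\sigma}$. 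Also, the existence of an ordinary (non-projective) extension $\widetilde{\gamma}$ is a hypothesis of the lemma, so what remains to be checked is only its measurable selection in $\gamma$, not the vanishing of the Mackey obstruction as your last paragraph suggests.
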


It is now clear that in order to settle the admissibility question, it is
natural to compare both representations. Being that $G$ is a type I group,
there exist unique direct integral decompositions for both $L$ and $\tau.$
Since both representations use the same family of unitary irreducible
representations in their direct integral decomposition, in order to compare
both representations, it is important to obtain the direct integral
decompositions for both $L$ and $\tau$, and to check for the containment of
$\tau$ inside $L$. In order to have a complete picture of the results in Lemma
\ref{Ronald}, we will need the following.

\begin{enumerate}
\item A precise description of the spectrum of the quasiregular representation.

\item The multiplicity function of the irreducible representations occurring
in the decomposition of the quasiregular representation.

\item A description of the push-forward of the Plancherel measure of $N$.
\end{enumerate}

Our approach here, will rely on the orbit method, and we will construct a
smooth orbital cross-section to parametrize the dual of the group $G.$ 

\section{Orbital Parameters}

In this section, we will introduce the reader to the theory developed by Currey, and Arnal, and Dali in \cite{Didier} for the construction of
cross-sections for coadjoint orbits in $\mathfrak{g}^{\ast}$, where
$\mathfrak{g}$ is any $n$-dimensional real exponential solvable Lie algebra
with Lie group $G$. First, we consider a complexification of the Lie algebra
$\mathfrak{g}$ which we denote here by $\mathfrak{c}=\mathfrak{g}_{\mathbb{C}%
}$. Let us be more precise. We begin by fixing an ordered basis $\left\{  Z_{1},\cdots,Z_{n}\right\}
$ for the Lie algebra $\mathfrak{c}$, where $Z_{i}=\operatorname{Re}%
Z_{i}+i\operatorname{Im}Z_{i}$, $\operatorname{Re}\left(  Z_{i}\right)  $, and
$\operatorname{Im}\left(  Z_{i}\right)  $ belong to $\mathfrak{g}$ such that
the following conditions are satisfied:

\begin{enumerate}
\item For each $k\in\{1,\cdots,n\},$ $\mathfrak{c}_{k}=\mathbb{C}\text{-}\mathrm{span}%
\left\{  Z_{1},Z_{2},\cdots,Z_{k}\right\}  $ is an ideal.

\item If $\mathfrak{c}_{j}\neq\overline{\mathfrak{c}_{j}}$ then
$\mathfrak{c_{j+1}}=\overline{\mathfrak{c}_{j+1}}$ and $Z_{j+1}=\overline
{Z_{j}}$.

\item If $\mathfrak{c}_{j}=\overline{\mathfrak{c}_{j}}$ and $\mathfrak{c}
_{j-1}=$ $\overline{\mathfrak{c}_{j-1}}$ then $Z_{j}\in\mathfrak{g}$.

\item For any $A\in$ $\log H,\left[  A,Z_{j}\right]  =\gamma_{j}
(A)Z_{j}\operatorname{mod}\mathfrak{c}_{j-1}$ with weight $$\gamma_j (A_k)=\lambda(A_k) (1+i \alpha_j).$$  $\lambda\in\mathfrak{h}^{\ast},\text{ is a real-valued
linear functional, and }\alpha_{j}\in\mathbb{R}.$
\end{enumerate}

Such basis is called an \textbf{adaptable basis}. We recall the procedure
described in \cite{Didier}. For any $l\in\mathfrak{g}%
^{\ast}$, we define for any subset $\mathfrak{s}$ of $\mathfrak{c}$,
$\mathfrak{s}^{l}=\{Z\in\mathfrak{c\ }:$ $l\left(  \left[  \mathfrak{s,}%
Z\right]  \right)  =0\}$ and $\mathfrak{s}(l)=\mathfrak{s}^{l}\cap
\mathfrak{s}$. Also, we define
\begin{align*}
i_{1}(l) =\min\left\{  j:\mathfrak{c}_{j}\text{ }\not \subset \text{
}\mathfrak{c}(l)\right\} ,\\
\mathfrak{h}_{1}(l)  =\mathfrak{c}_{i_{1}}^{l}=\left(  Z_{i_{1}}\right)
^{l},\\j_{1}(l)   =\min\left\{  j:\mathfrak{c}_{j}\text{ }\not \subset \text{
}\mathfrak{h}_{1}(l)\right\}  .
\end{align*}
By induction, for any $k\in\{1,2,\cdots,n\},$ we define%
\begin{align}
\label{polar}i_{k}(l)  &  =\min\{j:\mathfrak{c}_{j}\cap \mathfrak{h}_{k-1}%
(l)\not \subset \mathfrak{h}_{k-1}(l)^{l}\},\\
\mathfrak{h}_{k}(l)  &  =\left(  \mathfrak{h}_{k-1}(l)\cap\mathfrak{c}_{i_{k}%
}\right)  ^{l}\cap\mathfrak{h}_{k-1}(l),\\
j_{k}(l)  &  =\min\left\{  j:\mathfrak{c}_{j}\cap \mathfrak{h}_{k-1}(l)\text{
}\not \subset \text{ }\mathfrak{h}_{k}(l)\right\}  .
\end{align}
Finally, put $\mathbf{e}(l)=\mathbf{i}(l)\cup\mathbf{j}(l),$ where $\mathbf{i}(l)=\{i_{k}(l):1\leq k\leq d\},$ and $\mathbf{j}(l)=\{j_{k}(l):1\leq k\leq d\}$. An interesting
well-known fact is that $\mathrm{card}\left(  \mathbf{e}(l)\right)  $ is
always even. Also, observe the sequence $\{i_{k}:1\leq k\leq d\}$ is an
increasing sequence and, $i_{k} <j_{k}$ for $1\leq k\leq d$.

Following Definition $2$ \cite{Didier} , let $\mathcal{P}$ be a partition of the linear dual of the Lie algebra $\mathfrak{g}$.
\begin{definition} We say $\mathcal{P}$ is an \textbf{orbital
stratification} of $\mathfrak{g}^{\ast}$ if the following conditions are satisfied
\end{definition}

\begin{enumerate}
\item Each element $\Omega$ in $\mathcal{P}\ $is $G$-invariant.

\item For each $\Omega$ in $\mathcal{P}$, the coadjoint orbits in
$\Omega$ have the same dimension.

\item There is a linear ordering on $\mathcal{P}$ such that for each
$\Omega\in\mathcal{P},$ $${\displaystyle\bigcup}
\left\{  \Omega^{\prime}|\Omega^{\prime}\leq\Omega\right\}  $$ is a Zariski open subset of $\mathfrak{g}^{\ast}.$
\end{enumerate}

The elements $\Omega$ belonging to a stratification are called
\textbf{layers} of the dual space $\mathfrak{g}^{\ast}.$

\begin{definition}
Given any subset of $\mathbf{e}$ of $\left\{  1,2,\cdots,n\right\}  ,$ we
define the set
\[
\Omega_{\mathbf{e}}=\left\{  l\in\mathfrak{g}^{\ast}|\mathbf{e}(l)=\mathbf{e}%
\right\}
\]
which is $G$-invariant$.$ The collection of non-empty $\Omega_{\mathbf{e}}$
forms a partition of $\mathfrak{g}^{\ast}.$ Such partition is called a
\textbf{coarse stratification }of $\mathfrak{g}^{\ast}$. Given $\mathbf{e}%
(l)=\left\{  i_{1},\cdots,i_{d}\right\}  \cup\left\{  j_{1},\cdots
,j_{d}\right\}  ,$ we define $$\Omega_{\mathbf{e},\mathbf{j}}=\left\{
l\in\mathfrak{g}^{\ast}| \:\mathbf{e}(l)=\mathbf{e}\text{ and }\mathbf{j}%
(l)=\mathbf{j}\right\}.$$ The collection of non-empty $\Omega_{\mathbf{e,j}}$
forms a partition of $\mathfrak{g}^{\ast}$ called the \textbf{fine
stratification} of $\mathfrak{g}^{\ast},$ and the elements $\Omega
_{\mathbf{e},\mathbf{j}}$ are called \textbf{fine layers}.
\end{definition}

We keep the notations used in \cite{Didier}.

\begin{enumerate}
\item We fix an adaptable basis, an open dense layer $\Omega_{\mathbf{e}%
,\mathbf{j}}.$ We let $\mathfrak{c}_{0}=\{0\},$ and we define the following
sets:
\begin{align}
I  &  =\left\{  0\leq j\leq n+r\text{ }:\mathfrak{c}_{j}=\overline
{\mathfrak{c}_{j}}\right\}  ,\label{data}\\
j^{\prime}  &  =\max\left(  \left\{  0,1,\cdots,j-1\right\}  \cap I\right)
,\nonumber\\
j^{\prime\prime}  &  =\min\left(  \left\{  j,j+1,\cdots,n+r\right\}  \cap
I\right)  ,\nonumber\\
K_{0}  &  =\left\{  1\leq k\leq d\text{ }:\mathfrak{\ }i_{k}^{\prime\prime
}-i_{k}^{\prime}=1\right\}  ,\nonumber\\
K_{1}  &  =\left\{  1\leq k\leq d\text{ }:\mathfrak{\ }i_{k}\notin I\text{ and
}i_{k}+1\notin\mathbf{e}\right\}  ,\nonumber\\
K_{2}  &  =\left\{  1\leq k\leq d\text{ }:\mathfrak{\ }i_{k}-1\in
\mathbf{j}\backslash I\right\}  ,\nonumber\\
K_{3}  &  =\left\{  1\leq k\leq d\text{ }:\mathfrak{\ }i_{k}\notin I\text{ and
}i_{k}+1\in\mathbf{j}\right\}  ,\nonumber\\
K_{4}  &  =\left\{  1\leq k\leq d\text{ }:\mathfrak{\ }i_{k}\notin I\text{ and
}i_{k}+1\in\mathbf{i}\right\}  ,\nonumber\\
K_{5}  &  =\left\{  1\leq k\leq d\text{ }:\mathfrak{\ }\text{ }i_{k}%
-1\in\mathbf{i\backslash}I\right\}  .\nonumber
\end{align}
 We remark here that $$\mathbf{i}=\bigcup_{j=0}^{5}\{i_k: k \in K_j\}.$$

\item We gather some data corresponding to the fixed fine layer $\Omega
_{\mathbf{e},\mathbf{j}}$. For each $j\in\mathbf{e,}$ we
define recursively the rational function $Z_{j}:\Omega\rightarrow
\mathfrak{c}_{j^{\prime\prime}}$ such that for $k\in\left\{  1,2,\cdots
,d\right\}  ,$
\begin{align}
V_{1}(l)  &  =Z_{i_{1}}(l),U_{1}(l)=Z_{j_{1}}(l),\label{data 2}\\
V_{k}(l)  &  =\rho_{k-1}\left(  Z_{i_{k}}(l),l\right)  ,U_{k}(l)=\rho
_{k-1}\left(  Z_{j_{k}}(l),l\right)  ,\nonumber\\
Z_{i_{k}}(l)  &  =\beta_{1,k}(l)\operatorname{Re}Z_{i_{k}}+\beta
_{2,k}(l)\operatorname{Im}Z_{i_{k}},\nonumber\\
Z_{j_{k}}(l)  &  =\alpha_{1,k}(l)\operatorname{Re}Z_{j_{k}}+\alpha
_{2,k}(l)\operatorname{Im}Z_{j_{k}},\nonumber\\
\alpha_{1,k}  &  =l\left[  \operatorname{Re}Z_{j_{k}},V_{k}(l)\right]
,\alpha_{2,k}=l\left[  \operatorname{Im}Z_{j_{k}},V_{k}(l)\right]  .\nonumber
\end{align}%
\[
\rho_{k}\left(  Z,l\right)  =\rho_{k-1}\left(  Z,l\right)  -\frac{l\left[
\rho_{k-1}\left(  Z,l\right)  ,U_{k}(l)\right]  }{l\left[  V_{k}%
(l),U_{k}(l)\right]  }V_{k}(l)-\frac{l\left[  \rho_{k-1}\left(  Z,l\right)
,V_{k}(l)\right]  }{l\left[  U_{k}(l),V_{k}(l)\right]  }U_{k}(l)
\]
and $\rho_{0}(\cdot, l)$ is the identity map.

\begin{description}
\item[(a)] If $k\in K_{0},\beta_{1,k}(l)=1,$ and $\beta_{2,k}(l)=0.$

\item[(b)] If $k\in K_{1},\beta_{1,k}(l)=l\left(  \left[  \rho_{k-1}\left(
Z_{j_{k}},l\right)  ,\operatorname{Re}Z_{i_{k}}\right]  \right)  ,$ and
\[
\beta_{2,k}(l)=l\left(  \left[  \rho_{k-1}\left(  Z_{j_{k}},l\right)
,\operatorname{Im}Z_{i_{k}}\right]  \right)  .
\]
\item[(c)] If $k\in K_{2},i_{k}-1=j_{k},$ $\beta_{1,k}(l)=-\alpha_{2},_{k}(l)$
and $\beta_{2,k}(l)=-\alpha_{1},_{k}(l).$

\item[(d)] If $k\in K_{3},\beta_{1,k}(l)=0,\beta_{2,k}(l)=1.$

\item[(e)] If $k\in K_{4}(K_{5}$ is covered here too) and if $Z_{j_{k+1}%
}=\overline{Z_{j_{k}}},$ then $\beta_{1,k}(l)=1,\beta_{2,k}(l)=0,$ and
\[
Z_{i_{k}+1}(l)=-\left(  \frac{l\left[  U_{k}(l),\operatorname{Im}Z_{i_{k}%
}\right]  }{l\left[  U_{k}(l),\operatorname{Re}Z_{i_{k}}\right]  }\right)
\operatorname{Re}Z_{i_{k}+1}-\operatorname{Im}Z_{i_{k}+1}.
\]

\end{description}

\item Let $C_{j}=\ker\gamma_{j}\cap\mathfrak{g,a}_{j}(l)=\left(
\mathfrak{g}_{j^{\prime}}^{l}\cap C_{j}\right)  /\left(  \mathfrak{g}%
_{j^{\prime\prime}}^{l}\cap C_{j}\right)  ,$ we define the set $\varphi
(l)\subset\mathbf{i}$ such that $\varphi(l)=\left\{  j\in\mathbf{e|}%
\mathfrak{a}_{j}(l)= \{0\}\right\}  ,$ and
\[
\mathbf{b}_{j}(l)=\frac{\gamma_{j}\left(  U_{k}(l)\right)  }{l\left[
Z_{j},U_{k}(l)\right]  }.
\]
The collection of sets $\Omega_{\mathbf{e,j},\varphi}=\left\{  l\in
\Omega_{\textbf{e},\textbf{j}}|\varphi(l)=\varphi\right\}  $ forms a partition of $\mathfrak{g}%
^{\ast}$, refining the fine stratification which, we call the
\textbf{ultrafine stratification} of $\mathfrak{g}^{\ast}.$

\item Letting $\Omega_{\mathbf{e,j},\varphi}$ be a layer obtained by refining
the fixed fine layer $\Omega_{\mathbf{e},\mathbf{j}},$ and gathering the data
\[
Z_{j}(l),\mathbf{e},\varphi(l),\mathbf{b}_{j}(l),
\]
the cross-section for the coadjoint orbits of $\Omega$ is given by the set
\begin{equation}
\Sigma=\left\{  l\in\Omega:l\left(  Z_{j}(l)\right)  =0,j\in\mathbf{e}%
\backslash\varphi\text{ and }\left\vert \mathbf{b}_{j}(l)\right\vert
=1\ ,j\in\varphi\text{ }\right\}  . \label{crs}%
\end{equation}
\end{enumerate}
Let us now offer some concrete examples. 
\begin{example} Let $\mathfrak{g}$ be a Lie algebra spanned by $\{Z,Y,X,A\}$ with the following non-trivial Lie brackets: 
$$ [X,Y]=Z, [A,X+iY]=(1+i)(X+iY), [A,Z]=2Z.$$ An adaptable basis is $\{Z,X+iY,X-iY,A \}$ and an arbitrary linear functional is written as $l=(z,x+iy,x-iy,a)$. Here $I=\{0,1,3,4\}, 1'=0,2'=1,3'=1,4'=3, 4''=1,2''=3, 3''=3,\text{and } 4''=4.$ Put $\mathbf{e}=\{1,2,3,4\},$ and $\mathbf{j}=\{3,4\}.$ Next, it is easy to see that $1\in K_0$ and $2\in K_3.$ Moreover, we have  $$Z_{i_1}(l)=V_1(l)=Z, Z_{j_1}(l)=U_1(l)=A,Z_{i_2}(l)=Y,V_2(l)=\rho_1(Y,l)=Y-\dfrac{x+y}{2z}Z, $$ and $$ Z_{j_2}(l)=X,U_2(l)=\rho_1(X,l)=X-\dfrac{x-y}{2z}Z. $$ Then $\varphi=\{1\}$ and  $\Omega_{\mathbf{e,j}}=\{(z,x+iy,x-iy,a) : z \neq 0\}$ and $$\Sigma =\{(z,x+iy,x-iy,a)\in \Omega : |z|=1,a=x=y=0\}.$$ 
\end{example}
\begin{example} Let $\mathfrak{g}$ be a Lie algebra spanned by $$\{Z_1,Z_2,Y,X_1,X_2,A\}$$ with the following non-trivial Lie brackets: 
$$ [X_j,Y]=Z_j, [A,X_1+iX_2]=(1+i)(X_1+iX_2), [A,Z_1+iZ_2]=(1+i)(Z_1+iZ_2).$$ We choose an adaptable basis 
$$\{Z_1+iZ_2,Z_1-iZ_2,Y,X_1+iX_2,X_1-iX_2,A \}$$ for $\mathfrak{c}.$ We compute here that $I=\{0,2,3,5,6\},$ and
$1'=0,2'=0,3'=2,4'=3,5'=3,6'=5, 1''=2,2''=2,3''=3,4''=5,5''=5,6''=6. $
Pick $\mathbf{e}=\{1,3,4,6\},$ and  $\mathbf{j}=\{6,4\}$. In this example, the set $K_1$ contains $1$, $K_0$ contains $2$. Next, with some simple computations, we obtain $$Z_{i_1}(l)=(z_1-z_2)Z_1+(z_1+z_2)Z_2, Z_{j_1}=A, Z_{i_2}(l)=Y,Z_{j_2}=z_1 X_1+z_2 X_2.$$

Clearly $\varphi=\{1\},$ the corresponding layer is $\Omega_{\mathbf{e},\mathbf{j}}=\{(z,\overline{z},y,x,\overline{x}) : z \neq 0 \}$ and the corresponding cross-section is $$\Sigma=\{(z,\overline{z},y,x,\overline{x}) : |z|=1, a=y=0, \mathrm{Re}(\overline{z}x)=0 \} . $$\end{example}

Now, that we are introduced to the general construction, we will focus our attention to $N$ which is the
Lie group of the nilradical of $\mathfrak{g}$. $N$ being an exponential
solvable Lie group also, Formula \ref{crs} is valid. Let us recall the following well-known
facts. The first one is due to Kirillov, and the second one is an application
of the `Mackey Machine' (see \cite{Mackey}).

\begin{lemma}
Let $f\in\mathfrak{n}^{\ast}$, and $\widehat{N}$ the set of unitary
irreducible representations of $N$ up to equivalence. Let $\mathfrak{n}^{\ast
}/N=\{N\cdot f:f\in\mathfrak{n}^{\ast}\}$ be the set of coadjoint orbits.
There exists a unique bijection between $\mathfrak{n}^{\ast}/N$ and
$\widehat{N}$ via Kirillov map. Thus, the construction of a measurable
cross-section for the coadjoint orbits is a natural way to parametrize
$\widehat{N}$.
\end{lemma}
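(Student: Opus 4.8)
The plan is to recover this correspondence by induction on $n=\dim\mathfrak{n}$, so that the well-definedness, injectivity, and surjectivity of the Kirillov map are all obtained at once. First I would make the map explicit. Given $f\in\mathfrak{n}^{\ast}$, I would fix a flag of ideals $0=\mathfrak{n}_{0}\subset\mathfrak{n}_{1}\subset\cdots\subset\mathfrak{n}_{n}=\mathfrak{n}$ with $\dim\mathfrak{n}_{k}=k$, which exists in any nilpotent Lie algebra, and form the Vergne polarization $\mathfrak{p}(f)=\sum_{k=1}^{n}\mathfrak{n}_{k}(f)$, where $\mathfrak{n}_{k}(f)=\{X\in\mathfrak{n}:f([X,\mathfrak{n}_{k}])=0\}$. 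A routine verification shows that $\mathfrak{p}(f)$ is a subalgebra subordinate to $f$ of dimension $\frac{1}{2}(n+\dim\mathfrak{n}(f))$, where $\mathfrak{n}(f)=\{X:f([X,\mathfrak{n}])=0\}$. Because $N$ is simply connected nilpotent, $\exp$ is a global diffeomorphism and the subordination condition makes $\chi_{f}(\exp X)=e^{if(X)}$ a well-defined unitary character of $P=\exp\mathfrak{p}(f)$; the Kirillov map then sends the orbit $N\cdot f$ to the class of $\pi_{f}=\mathrm{Ind}_{P}^{N}\chi_{f}$.

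Next I would verify that $\pi_{f}$ is irreducible and that its class depends only on the coadjoint orbit and not on the flag. The standard device is a codimension-one reduction. Let $\mathfrak{z}$ be the center of $\mathfrak{n}$. If $f$ vanishes on some one-dimensional central ideal $\mathbb{R}Z\subset\mathfrak{z}$, then $\chi_{f}$ and hence $\pi_{f}$ factors through the simply connected nilpotent quotient $N/\exp(\mathbb{R}Z)$ of dimension $n-1$, and the inductive hypothesis applies directly. Otherwise $f(Z)\neq0$ for some central $Z$; choosing a codimension-one ideal $\mathfrak{m}\supset[\mathfrak{n},\mathfrak{n}]$ with $M=\exp\mathfrak{m}$ normal in $N$, one shows $\pi_{f}\simeq\mathrm{Ind}_{M}^{N}\pi_{f|_{\mathfrak{m}}}$ and uses the Stone--von Neumann uniqueness attached to the nondegenerate pairing induced by $Z$ to conclude irreducibility and to pin down the orbit. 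The crucial point is that the restriction $\mathfrak{n}^{\ast}\to\mathfrak{m}^{\ast}$ intertwines the two orbit pictures, which is exactly what lets the induction close.

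For surjectivity I would start from an arbitrary $\pi\in\widehat{N}$, restrict to $M$, and invoke the Mackey machine (as in \cite{Mackey}): either $\pi|_{M}$ is a multiple of a single irreducible fixed by the $N$-action, handled by descent to a quotient, or $\pi\simeq\mathrm{Ind}_{M}^{N}\sigma$ with $\sigma\in\widehat{M}$ on a nontrivial $N$-orbit. By induction $\sigma\simeq\pi_{f_{0}}$ for some $f_{0}\in\mathfrak{m}^{\ast}$, and any extension of $f_{0}$ to $\mathfrak{n}^{\ast}$ yields an $f$ with $\pi\simeq\pi_{f}$; injectivity follows by running the same dichotomy in reverse, forcing the two orbits to agree under $\mathfrak{n}^{\ast}\to\mathfrak{m}^{\ast}$ and hence to coincide. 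I expect the main obstacle to be the bookkeeping of the non-central induced case: checking that induction from $M$ matches the coadjoint-orbit projection uniformly, so that no orbit is double-counted and none is missed. Since the author attributes this to Kirillov and the Mackey machine, in the paper itself I would simply cite \cite{Mackey} rather than reproduce the full inductive argument.
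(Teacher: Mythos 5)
The paper offers no proof of this lemma at all: it is stated as a known fact, attributed to Kirillov (and, for the companion lemma, to the Mackey machine via \cite{Mackey}), and the author relies on the literature --- in effect the reference is \cite{corwin}, Chapter 2, where the full inductive argument you sketch is carried out. Your reconstruction is the classical proof and its outline is correct: Vergne polarization from a flag of ideals, well-definedness and irreducibility by induction on $\dim\mathfrak{n}$ through the codimension-one/central dichotomy, Stone--von Neumann in the nondegenerate case, and the Mackey analysis of $\pi|_{M}$ for surjectivity and injectivity. Two small cautions if you were to write it out in full. First, in the noncentral case the codimension-one ideal $\mathfrak{m}$ cannot be arbitrary among those containing $[\mathfrak{n},\mathfrak{n}]$: for the Stone--von Neumann step one takes $\mathfrak{z}(\mathfrak{n})$ one-dimensional with $f$ nonvanishing on it, picks $Y\notin\mathfrak{z}(\mathfrak{n})$ with $[\mathfrak{n},Y]\subset\mathfrak{z}(\mathfrak{n})$, and lets $\mathfrak{m}$ be the centralizer of $Y$, so that the $N$-action on the relevant $M$-orbits is free in the transverse direction; with a generic $\mathfrak{m}$ the bookkeeping you worry about genuinely breaks. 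Second, the final sentence of the lemma --- that a \emph{measurable} cross-section for the orbits parametrizes $\widehat{N}$ --- is the part the paper actually uses downstream, and your argument does not address measurability of the orbit space; in the paper this is supplied by the explicit rational layer and cross-section construction of \cite{Didier} in Section 3, not by the Kirillov induction itself. Given that you close by saying you would simply cite the reference, your proposal is consistent with what the paper does.
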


\begin{lemma}
The set of unitary irreducible representations of $G$, $\widehat{G}$ is a
fiber set with $\widehat{N}/H$ as base, and fibers $\widehat{H_{\lambda}}$,
where $H_{\lambda}$ is a closed subgroup of $H$ stabilizing the coadjoint
action of $H$ on the linear functional $\lambda$ .
\end{lemma}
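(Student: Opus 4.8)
The plan is to read this statement off the Mackey normal subgroup analysis (the `Mackey Machine') applied to the pair $N\triangleleft G$, using Lipsman's theorem (Lemma \ref{Ronald}) as the organizing principle. Since $N$ is nilpotent it is type I, and it is normal in $G=N\rtimes H$ by construction; the Kirillov correspondence recalled above identifies $\widehat{N}$ with the coadjoint orbit space $\mathfrak{n}^{\ast}/N$. Under this identification the action of $G$ on $\widehat{N}$ factors through $H$ and coincides with the $H$-action on $\mathfrak{n}^{\ast}/N$ induced by the dual adjoint action. First I would translate the stabilizer: for $\gamma\in\widehat{N}$ corresponding to the orbit $N\cdot\lambda$, the little group $H_{\gamma}=\{h\in H:h\cdot\gamma\simeq\gamma\}$ is exactly the closed subgroup $H_{\lambda}$ of $H$ that preserves the orbit $N\cdot\lambda$, i.e. the stabilizer in $H$ of the coadjoint action on $\lambda$ (equivalently, on the representation $\gamma$). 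This fixes the base $\widehat{N}/H$ and pins down the groups $H_{\lambda}$ that must appear in the fibers.

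Next I would verify the single regularity hypothesis that the Mackey machine requires, namely that the $H$-action on $\widehat{N}$ is regular (the orbit space $\widehat{N}/H$ is countably separated, hence standard Borel). This is where the orbital machinery of Section 3 does the work: because $G$ is exponential solvable, the coadjoint orbits sit inside a semialgebraic stratification, and the construction recalled from \cite{Didier} produces, on each ultrafine layer, a smooth cross-section $\Sigma$ for the orbits. A measurable cross-section separates the $H$-orbits on $\widehat{N}$, so $\widehat{N}/H$ is standard Borel and the action is regular. With regularity in hand, the Mackey machine (equivalently Lemma \ref{Ronald}, whose outer index set is exactly $\widehat{N}/H$ and whose inner fibre is $\widehat{H_{\gamma}}$) asserts that every $[\pi]\in\widehat{G}$ lies over a unique orbit $[\gamma]\in\widehat{N}/H$, and that the representations lying over $[\gamma]$ are induced from the irreducible representations of $G_{\gamma}=N\rtimes H_{\lambda}$ whose restriction to $N$ is $\gamma$-isotypic.

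It then remains to identify that set of representations of $G_{\gamma}$ with $\widehat{H_{\lambda}}$, and this is the one place where a genuine (not merely formal) argument is needed. The obstruction is the Mackey multiplier: $\gamma$ extends to an ordinary representation of $G_{\gamma}$ exactly when the associated class in $H^{2}(H_{\lambda},\mathbb{T})$ vanishes. Here $H_{\lambda}$ is a closed connected subgroup of $H\cong\mathbb{R}^{r}$, hence itself a simply connected abelian Lie group, so this cohomology is trivial and an extension $\widetilde{\gamma}$ of $\gamma$ to $G_{\gamma}$ exists. Tensoring $\widetilde{\gamma}$ with the pullbacks of the characters of $H_{\lambda}$ gives a bijection between the representations of $G_{\gamma}$ over $\gamma$ and $\widehat{H_{\lambda}}$, so inducing up to $G$ yields a bijection between the fibre over $[\gamma]$ and $\widehat{H_{\lambda}}$. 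Assembling these fibrewise bijections over all $[\gamma]\in\widehat{N}/H$ exhibits $\widehat{G}$ as the claimed fibre set.

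The main obstacle is the combination of the regularity step with the vanishing of the Mackey obstruction: one must be sure that the smooth cross-sections of Section 3 genuinely separate the $H$-orbits on $\widehat{N}$ (so that the orbit space is standard), and that the extension $\widetilde{\gamma}$ can be chosen so the fibre is literally $\widehat{H_{\lambda}}$ rather than a cocycle-twisted dual. Both are controlled by the exponential solvable structure: semialgebraic orbits yield the standard Borel orbit space, and the simple connectivity of $H_{\lambda}$ kills the multiplier.
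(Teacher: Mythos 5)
Your framework is the right one, and in fact the paper offers no proof at all for this lemma: it is stated as a well-known fact, an application of the Mackey machine with a citation to Mackey, so your fleshing-out of Kirillov correspondence, little groups, and regularity of $\widehat{N}/H$ via the cross-sections of Section 3 is exactly the intended reading, and the regularity step is sound. One small caveat: your identification of the little group $H_{\gamma}$ (the stabilizer of the \emph{orbit} $N\cdot\lambda$) with the stabilizer of the \emph{point} $\lambda$ is only valid because $\lambda$ can be taken in the $H$-invariant cross-section $\Lambda$; there, $h\cdot\lambda\in N\cdot\lambda$ forces $h\cdot\lambda=\lambda$ since $\Lambda$ meets each $N$-orbit once, whereas for a general representative the orbit stabilizer can be strictly larger than the point stabilizer.

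The genuine gap is your treatment of the Mackey obstruction, which you yourself single out as the one step needing a real argument. The claim that $H^{2}(H_{\lambda},\mathbb{T})$ is trivial because $H_{\lambda}\cong\mathbb{R}^{k}$ is simply connected and abelian is false: the Moore (measurable) cohomology relevant to Mackey theory satisfies $H^{2}(\mathbb{R}^{k},\mathbb{T})\cong\Lambda^{2}(\mathbb{R}^{k})^{\ast}\neq 0$ for $k\geq 2$. The standard counterexample is the Heisenberg cocycle $\omega(v,w)=e^{i(v_{1}w_{2}-v_{2}w_{1})}$ on $\mathbb{R}^{2}$, whose projective representations are the Weyl/CCR systems and do not lift to ordinary representations. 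So simple connectivity alone cannot kill the obstruction; what must be shown is that the \emph{specific} class attached to $\pi_{\lambda}$ vanishes. In the present setting this is true, and the paper's own route (Lemma \ref{claim}) is the natural repair: the Currey--Arnal--Dali construction provides an $H$-invariant positive polarization, so the intertwining operators can be chosen geometrically as $C(k,\lambda)f(x)=\left\vert\delta_{\mathcal{X}}(k)\right\vert^{-1/2}f(k^{-1}\cdot x)$, and these visibly satisfy $C(k_{1},\lambda)C(k_{2},\lambda)=C(k_{1}k_{2},\lambda)$, i.e. $k\mapsto C(k,\lambda)$ is already an honest extension of $\pi_{\lambda}$ to $NK$ with trivial cocycle. (Alternatively, one can argue that the projective representation of $K$ factors through $\beta:K\rightarrow\mathrm{Sp}(\mathfrak{n}/\mathfrak{n}(\lambda))$ and the metaplectic class, which is $2$-torsion, and $H^{2}(\mathbb{R}^{k},\mathbb{Z}_{2})=0$ for a vector group, so the pullback dies.) With either repair your argument goes through and feeds correctly into Lemma \ref{Ronald}; as written, however, the step ``cohomology is trivial, hence an extension exists'' is not valid.
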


We aim here to construct an $H$-invariant cross-section for the the coadjoint
orbits of $N$ in $\mathfrak{n}^{\ast}$. We consider the nilradical
$\mathfrak{n}$ of $\mathfrak{g}$ instead of $\mathfrak{g}$, and we go through
the procedure described earlier. We first obtain an adaptable basis $\left\{
Z_{1},\cdots,Z_{n}\right\}  $ for the complexification of the Lie algebra
$\mathfrak{n}$ which we denote by $\mathfrak{m}$. Notice that, $\left\{
Z_{1},\cdots,Z_{n},A_{1},\cdots,A_{\dim(\mathfrak{h})}\right\}  $ is then an
adaptable basis for $\mathfrak{g}$. First, fixing a dense open layer $\Omega\subset\mathfrak{g}^{\ast}$ and
$f\in\Omega$, we obtain the jump indices corresponding to the generic layer of
$\mathfrak{g}^{\ast}$.
\begin{align*}
\mathbf{i}^{\circ}(f)  &  =\left\{  i_{1},\cdots,i_{d^{\circ}}\right\} \\
\mathbf{j}^{\circ}(f)  &  =\left\{  j_{1},\cdots,j_{d^{\circ}}\right\} \\
\mathbf{e}^{\circ}(f)  &  =\left\{  i_{1},\cdots,i_{d^{\circ}}\right\}
\cup\left\{  j_{1},\cdots,j_{d^{\circ}}\right\}  .
\end{align*}

Second, let $\Omega_{\mathbf{e}^{\circ}\mathbf{j}^{\circ}}$ be a fixed fine
layer obtained by refining $\Omega$. Given any subset $\mathbf{e}^{\circ
}\mathbf{\subseteq\{}1,\cdots,n\}$, the non-empty sets $\Omega_{\mathbf{e}
^{\circ}\mathbf{j}^{\circ}}$ are characterized by the Pfaffian of the
skew-symmetric matrix $M_{\mathbf{e}^{\circ}}(f)=\left[  f\left[  Z_{i}%
,Z_{j}\right]  \right]  _{i,j\in\mathbf{e}^{\circ}}.$ Referring to the
procedure described in (\ref{data}) and (\ref{data 2}), we obtain
\[
Z_{_{i_{1}^{\circ}}}\left(  f\right)  ,Z_{j_{1}^{\circ}}\left(  f\right)
\cdots Z_{i_{d^{\circ}}}\left(  f\right)  ,Z_{j_{d^{\circ}}}\left(  f\right)
,
\]
and we have the polarizing sequence $\mathfrak{m=}$ $\mathfrak{h}%
_{0}(l)\supseteq\mathfrak{h}_{1}(l)\supseteq\cdots\supseteq\mathfrak{h}%
_{d^{\circ}}(l)$. Thirdly, we compute the following data:
\begin{align*}
&  I,j^{\prime},j^{\prime\prime},K_{0},K_{1},K_{2},K_{3},K_{4},K_{5}
,V_{1}(f),\\
&  \cdots,V_{d^{\circ}}(f),U_{1}(f),\cdots,U_{d^{\circ}}(f),\varphi
(f),b_{j}(f)
\end{align*}
corresponding to our fine layer $\Omega_{\mathbf{e}^{\circ}\mathbf{j}^{\circ}%
}$ as described in (\ref{data}) and (\ref{data 2}). Finally, gathering all the data, we first notice that $\varphi(f)=\emptyset$, since according to Proposition 4.1 in \cite{Didier}, $\mathfrak{a}_{j}\left(
l\right)  =0$ if and only if $\gamma_{j}\left(  U_{k}\left(  l\right)
\right)  \neq0$ for $j=i_{k}.$ As shown in \cite{Didier}, an $H$-invariant cross-section for the
coadjoint $N$ orbits for $\Omega_{\mathbf{e}^{\circ}}$ is given by
\begin{equation}
\Lambda=\left\{  f\in\Omega_{\mathbf{e}^{\circ},\mathbf{j}^{\circ}}\text{
}:f\left(  Z_{j}(f)\right)  =0,j\in\mathbf{e}^{\circ}\right\}  .
\label{Lambda}%
\end{equation}
Following the proof of Theorem 4.2 in \cite{Didier}, we have three separate cases
\begin{description}
\item[Case 1] If $j\in I$ or if $j\not \in I$ and $j+1\in\mathbf{e}^{\circ}$
then $f\left(  Z_{j}(f)\right)  =0$ is equivalent to $f\left(  Z_{j}\right)
=0$. 
\item[Case 2] If $j\not \in I$, $j+1\not \in \mathbf{e}^{\circ},$ and
$j=i_{k}$ then
\begin{align*}
f(Z_{j}(f))  &  =f([\rho_{k-1}(Z_{j_{k}},f),\operatorname{Re}Z_{j}%
])\operatorname{Re}f(Z_{j})\\
&  +f[\rho_{k-1}(Z_{j_{k}},f),\operatorname{Im}Z_{j}]\operatorname{Im}%
f(Z_{j}).
\end{align*}
\item[Case 3] If $j\not \in I$, $j+1\not \in \mathbf{e}^{\circ},$ and
$j=j_{k}$ then the equation $f\left(  Z_{j}(f)\right)  =0$ is equivalent to
\[
\operatorname{Re}(f[\rho_{k-1}(\overline{Z_{j}},f), \operatorname{Re}
Z_{i_{k}} ] f(Z_{j})=\operatorname{Re}(f[\rho_{k-1}(\overline{Z_{j}},f),
\operatorname{Im} Z_{i_{k}} ] f(Z_{j})=0.
\]
\end{description}
\begin{remark}
\label{remark} If the assumptions of Case 1 hold for all elements of
$\mathbf{e}^{\circ}$ then $$
\Lambda=\left\{  f\in\Omega_{\mathbf{e}^{\circ},\mathbf{j}^{\circ}}\text{
}:f\left(  Z_{j}\right)  =0,j\in\mathbf{e}^{\circ}\right\}.$$
\end{remark}
\begin{example} Let $\mathfrak{g}$ be a nilpotent Lie algebra spanned by $\{Z_1,Z_2,Y_1,Y_2,X_1,X_2\}$ with the following non-trivial Lie brackets: $[X_j,Y_j]=Z_j.$ Choosing the following adaptable basis $$\{Z_1+iZ_2,Z_1-iZ_2,Y_1+iY_2,Y_1-iY_2,X_1+iX_2,X_1-iX_2\},$$ letting $\mathbf{e}^{\circ}=\{3,4,5,6\}$, and $\mathbf{j}^{\circ}=\{5,6\}$ then $$\Omega_{\mathbf{e}^{\circ},\mathbf{j}^{\circ}}=\{(z,\overline{z},y,\overline{y},x,\overline{x}) : z\neq 0\}$$ and $$\Lambda=\{(z,\overline{z},y,\overline{y},x,\overline{x})\in \Omega_{\mathbf{e}^{\circ},\mathbf{j}^{\circ}} : x=y=0 \}. $$ \end{example}
 Now, we will compute a general formula a smooth cross-section for the $G$-orbits in
some open dense set in $\mathfrak{g}^{\ast}$. Let $\lambda:\Omega
_{\mathbf{e}^{\circ}\mathbf{j}^{\circ}}\rightarrow\Lambda$ be the
cross-section mapping, for each $f\in\mathfrak{n}^{\ast}$, we define
$\nu\left(  f\right)  =\left\{  1\leq j\leq n:f\left(  Z_{j}\right)
\neq0\right\}  .$ Put
\[
\mathfrak{h}\left(  f\right)  = {\displaystyle\bigcap\limits_{j\in\nu\left(
f\right)  }} \ker\gamma_{j},
\]
and let $\Lambda_{\nu}=\left\{  f\in\Lambda:\nu\left(  f\right)  =\nu\right\}
. $ Observe that $\mathfrak{h}(f)$ is the Lie algebra of the stabilizer
subgroup (a subgroup of $H$) of the linear functional $f$. For any
$f\in\Lambda_{\nu}$, since we have a diagonal action, then $\mathfrak{h}(f)$
is independent of $f$ and is equal to some constant subalgebra $\mathfrak{k}%
\subset\mathfrak{h}$. 

\begin{lemma}
There exists $\nu\subseteq\left\{  1,\cdots,n\right\}  $ such that
$\Lambda_{\nu}$ is dense and Zariski open in $\Lambda$, and letting $\pi$ be
the projection or restriction mapping from $\mathfrak{g}^{\ast}$ onto
$\mathfrak{n}^{\ast}$, and $\Omega_{\nu}=\pi^{-1}\circ\lambda^{-1}\left(
\Lambda_{\nu}\right)  , $ then $\Omega_{\nu}$ is Zariski open in
$\mathfrak{g}^{\ast}$.
\end{lemma}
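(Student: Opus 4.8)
The plan is to single out the generic value of the index set $\nu(f)$ on $\Lambda$ and then carry Zariski-openness backward through the two maps $\lambda$ and $\pi$, each of which is algebraic on the layer in question. Recall that on the dense open layer $\Omega_{\mathbf{e}^\circ,\mathbf{j}^\circ}$ the data $Z_j(f)$, the Pfaffian of $M_{\mathbf{e}^\circ}(f)$, and the pairings $l[V_k(l),U_k(l)]$ occurring in the denominators of (\ref{data 2}) are nonvanishing rational functions; consequently, by the construction of \cite{Didier}, $\Lambda$ is a Zariski locally closed subset of $\mathfrak{n}^*$ whose Zariski closure is irreducible (in the situation of Remark~\ref{remark} it is literally an open piece of the affine subspace $\{f : f(Z_j)=0,\ j\in\mathbf{e}^\circ\}$). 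I would then consider the coordinate functions $f\mapsto f(Z_j)$ restricted to $\Lambda$: by irreducibility each is either identically zero on $\Lambda$ or nonvanishing on a dense Zariski-open subset, and I set $\nu=\{\,j : f(Z_j)\text{ is not identically zero on }\Lambda\,\}$.

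For this choice I would check that $\Lambda_\nu=\bigcap_{j\in\nu}\{f\in\Lambda : f(Z_j)\ne 0\}$. Indeed, if $f$ lies in the right-hand intersection then $f(Z_j)\ne 0$ for $j\in\nu$, while $f(Z_j)=0$ for $j\notin\nu$ because those coordinates vanish identically on $\Lambda$; hence $\nu(f)=\nu$, and the reverse inclusion is immediate. Being a finite intersection of dense Zariski-open subsets of the irreducible set $\Lambda$, the set $\Lambda_\nu$ is nonempty, dense, and Zariski open, which proves the first assertion. Writing $W=\bigcap_{j\in\nu}\{g\in\mathfrak{n}^* : g(Z_j)\ne 0\}$, a Zariski-open subset of $\mathfrak{n}^*$, we have $\Lambda_\nu=\Lambda\cap W$.

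For the second assertion I would transport $W$ backward. Since $\lambda$ maps $\Omega_{\mathbf{e}^\circ,\mathbf{j}^\circ}$ into $\Lambda$ and its components are rational functions regular on the layer (their denominators being the nonvanishing quantities above), $\lambda$ is a morphism there, and $\lambda^{-1}(\Lambda_\nu)=\lambda^{-1}(W)$ is Zariski open in $\Omega_{\mathbf{e}^\circ,\mathbf{j}^\circ}$; as the latter is itself Zariski open in $\mathfrak{n}^*$, so is $\lambda^{-1}(\Lambda_\nu)$. Finally, the restriction map $\pi:\mathfrak{g}^*\to\mathfrak{n}^*$ is linear and hence a morphism, so $\Omega_\nu=\pi^{-1}(\lambda^{-1}(\Lambda_\nu))$ is Zariski open in $\mathfrak{g}^*$, as claimed.

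The main obstacle is not the topological bookkeeping but the two structural inputs that make it run: that the Zariski closure of $\Lambda$ is irreducible (so that ``not identically zero'' can be upgraded to ``dense and open''), and that $\lambda$ is a genuine regular morphism on $\Omega_{\mathbf{e}^\circ,\mathbf{j}^\circ}$. Both follow from a careful reading of the orbital-parameter construction of \cite{Didier}, specifically from the nonvanishing of the Pfaffian of $M_{\mathbf{e}^\circ}(f)$ and of the pairings $l[V_k,U_k]$ on the generic layer, rather than from any new computation.
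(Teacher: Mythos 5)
Your proof is correct and follows essentially the same route as the paper: single out the generic index set $\nu$, observe that $\Lambda_{\nu}$ is dense and Zariski open in $\Lambda$, and pull this back through the rational cross-section map $\lambda$ (regular on the layer) and the linear restriction map $\pi$. The only real difference is that the paper simply takes $\nu=\left\{1,\cdots,n\right\}\backslash\mathbf{e}^{\circ}$ and asserts the density and openness of $\Lambda_{\nu}$ outright, whereas you characterize $\nu$ as the set of coordinates not identically vanishing on $\Lambda$ and justify the genericity via irreducibility of the Zariski closure of $\Lambda$ --- a more careful argument for the same claim.
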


\begin{proof}
It suffices to let $\nu=\left\{  1,\cdots,n\right\}  \backslash\mathbf{e}
^{\circ}.$ Notice that
\[
\Lambda_{\nu}=\left\{  f\in\Lambda\text{ }:\nu\left(  f\right)  =\left\{
1,\cdots,n\right\}  \backslash\mathbf{e}^{\circ}\text{ }\right\}
\]
is dense and Zariski open in $\Lambda$. Additionally, we observe that for
$f\in\Lambda_{\nu},$ and $j\in$ $\left\{  1,\cdots,n\right\}  \backslash
\mathbf{e}^{\circ},f\left(  Z_{j}\right)  \neq0.$ Next, $\Omega_{\nu}$ is
Zariski open in $\mathfrak{g}^{\ast}$ since the projection map is continuous,
and the cross-section mapping is rational and smooth (see \cite{Didier}).
\end{proof}

\begin{lemma}
If $l\in\Omega_{\nu},\mathbf{e}\left(  l\right)  $ is the set of jump indices
for $\Omega_{\nu}$ such that
\begin{align*}
\mathbf{e}(l)  &  =\left\{  i_{1},\cdots,i_{d}\right\}  \cup\left\{
j_{1},\cdots,j_{d}\right\}  ,\\
\mathbf{i}(l)  &  =\left\{  i_{1},\cdots,i_{d}\right\}  ,\\
\mathbf{j}(l)  &  =\left\{  j_{1},\cdots,j_{d}\right\}
\end{align*}
then $\max\mathbf{i}(l)$ $\leq\dim\mathfrak{n}$.
\end{lemma}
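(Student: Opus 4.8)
The plan is to reduce the claim to two facts that are special to our setting: that $\mathfrak h$ is abelian, and that the nilradical occupies the initial segment $Z_1,\dots,Z_n$ of the adaptable basis while the $A_k$'s occupy the tail $n+1,\dots,n+r$. First I would observe that the alternating form $(X,Y)\mapsto l[X,Y]$ on $\mathfrak c$ depends only on the restriction $f=l|_{\mathfrak n}$. Indeed $[\mathfrak g,\mathfrak g]\subseteq\mathfrak n$, because $\mathfrak n$ is an ideal and $[\mathfrak h,\mathfrak h]=0$, so $l[X,Y]$ never tests $l$ on $\mathfrak h$. In particular the entire jump-index data $\mathbf e(l),\mathbf i(l),\mathbf j(l)$ depends only on $f$, and I may compute it from this single form.

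Next I would recall the description of the jump indices through the radicals $\mathfrak c_j(l)=\mathfrak c_j^{\,l}\cap\mathfrak c_j$ of the restrictions of the form to the flag $\mathfrak c_1\subset\cdots\subset\mathfrak c_{n+r}$. Since the form is alternating, $\operatorname{rank}(l[\cdot,\cdot]|_{\mathfrak c_j})$ is even, and in passing from $\mathfrak c_{j-1}$ to $\mathfrak c_j$ it increases by exactly $2$ when $Z_j\notin\mathfrak c_{j-1}^{\,l}$ and is unchanged otherwise. With respect to the recursion in \eqref{polar}, an index $j_k$ is a $j$-index precisely at the steps where the rank increases, and each such step pairs $Z_{j_k}$ with a radical direction that entered the flag at a strictly earlier index $i_k<j_k$; the set $\mathbf i(l)$ consists exactly of those radical-entry steps that are eventually liberated in this way. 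Thus proving $\max\mathbf i(l)\le n$ reduces to showing that no basis vector of index $>n$, that is, none of the $A_k$, can be such a liberated radical-entry.

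The heart of the matter is then a short case analysis for the vector $A_k$ sitting at flag position $n+k$. Because $\mathfrak c_{n+k-1}=\mathfrak m\oplus\mathbb C\text{-}\mathrm{span}\{A_1,\dots,A_{k-1}\}$ and $[\mathfrak h,\mathfrak h]=0$, we have $l[A_k,\mathfrak c_{n+k-1}]=f[A_k,\mathfrak m]$. If this vanishes, then $A_k$ is orthogonal to $\mathfrak c_{n+k}$, and since every later basis vector $A_{k+1},\dots,A_r$ also commutes with $A_k$, it is orthogonal to all of $\mathfrak c=\mathfrak c_{n+r}$; hence $A_k\in\mathfrak c(l)$ lies in the global radical and $n+k\notin\mathbf e(l)$. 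If instead $f[A_k,\mathfrak m]\neq0$, then the rank increases at the step $n+k$, so $n+k\in\mathbf j(l)$; moreover the direction it pairs with lies in $\mathfrak c_{n+k-1}$ and, again by $[\mathfrak h,\mathfrak h]=0$, in fact in $\mathfrak m$, so the matching $i$-index is $\le n$. In neither case is $n+k$ an $i$-index. Combining the two cases, every index exceeding $n$ is either absent from $\mathbf e(l)$ or lies in $\mathbf j(l)$, whence $\mathbf i(l)\subseteq\{1,\dots,n\}$ and $\max\mathbf i(l)\le\dim\mathfrak n$.

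I expect the one delicate point to be the identification of the recursive definitions of $i_k(l),j_k(l)$ with the rank picture used above, namely verifying that the radical direction liberated at a rank-increasing step is precisely the one that entered at its partner index, and that a vector entering the radical at an $A_k$-step and commuting with all later basis vectors can never be liberated. This is exactly where the ordering ``nilradical first, then the mutually commuting $A_k$'s'' is essential: the commutativity of $\mathfrak h$ forces every $A_k$ to pair only downward into $\mathfrak m$, which is what prevents any $A_k$ from ever serving as an $i$-index. A good sanity check on the whole scheme is that it reproduces the two worked examples, in each of which the single generator $A$ occurs in $\mathbf j$ and every element of $\mathbf i$ is a nilradical index.
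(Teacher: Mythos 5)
Your argument is correct and rests on exactly the same two facts as the paper's own proof: $\mathfrak{h}$ is commutative and occupies the tail of the adaptable basis, so an index beyond $n$ can never pair with another index beyond $n$, forcing every such index to be either outside $\mathbf{e}(l)$ or a $j$-index paired downward into $\mathfrak{m}$. The paper packages this as a two-line contradiction (if $i_t>n$ then $j_t>i_t>n$, whence $l[Z_{i_t},Z_{j_t}]=0$ by commutativity, contradicting the pairing), whereas you unwind it into a rank/flag case analysis; the content is the same.
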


\begin{proof}
Let us assume by contradiction that there exists some jump index $i_{t}%
\in\mathbf{i}(l)$ such that $Z_{i_{t}}\in\mathfrak{h}$. Because, jump indices
always come in pairs, and because $j_{t}>i_{t}$, then $Z_{j_{t}}%
\in\mathfrak{h}$. However, since $\mathfrak{h}$ is commutative, then
$l[Z_{i_{t}},Z_{j_{t}}]=0$. This is a contradiction.
\end{proof}

\begin{lemma}
For any $l\in\Omega_{\nu}$, and for all $j\in$ $\left(  \mathbf{e}\left(
l\right)  \backslash\mathbf{e}^{\circ}\right)  \backslash\mathbf{i}(l),$
$Z_{j}\in\mathfrak{h}.$
\end{lemma}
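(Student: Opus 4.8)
The plan is to strip the index set down to a set of pure upper jump indices and then to show that those indices all lie in the $\mathfrak{h}$-range. First I would simplify the set in the statement. Since the jump indices occur in pairs $i_{k}<j_{k}$, the sets $\mathbf{i}(l)$ and $\mathbf{j}(l)$ are disjoint and $\mathbf{e}(l)=\mathbf{i}(l)\cup\mathbf{j}(l)$, so
\[
\left(\mathbf{e}(l)\setminus\mathbf{e}^{\circ}\right)\setminus\mathbf{i}(l)
=\mathbf{e}(l)\setminus\left(\mathbf{e}^{\circ}\cup\mathbf{i}(l)\right)
=\mathbf{j}(l)\setminus\mathbf{e}^{\circ}.
\]
Because $\mathbf{e}^{\circ}\subseteq\{1,\dots,n\}$ and $Z_{j}\in\mathfrak{h}$ is equivalent to $j>n$, it suffices to prove that every upper jump index $j\in\mathbf{j}(l)$ with $j\le n$ already lies in $\mathbf{e}^{\circ}$; equivalently $\mathbf{j}(l)\cap\{1,\dots,n\}\subseteq\mathbf{e}^{\circ}$.

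The key step is to compare the $G$-jump data with the $N$-jump data along the nilpotent part of the flag. Put $f=l|_{\mathfrak{n}}$; since $l\in\Omega_{\nu}=\pi^{-1}\circ\lambda^{-1}(\Lambda_{\nu})$, its projection $f=\pi(l)$ lands in the fine layer $\Omega_{\mathbf{e}^{\circ},\mathbf{j}^{\circ}}$, so $f$ has nilpotent jump set $\mathbf{e}^{\circ}$ and upper set $\mathbf{j}^{\circ}$. For each $p\le n$ we have $\mathfrak{c}_{p}\subseteq\mathfrak{m}$, and since $\mathfrak{m}$ is an ideal, $[\mathfrak{c}_{p},\mathfrak{c}_{p}]\subseteq\mathfrak{m}$; hence $l([X,Y])=f([X,Y])$ for all $X,Y\in\mathfrak{c}_{p}$. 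The restricted alternating forms therefore coincide, and so do their radicals:
\[
\mathfrak{c}_{p}(l)=\mathfrak{c}_{p}(f),\qquad p\le n,
\]
so that $\dim\mathfrak{c}_{p}(l)=\dim\mathfrak{c}_{p}(f)$ for every $p\le n$.

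Now I would invoke the intrinsic description of the pairing built in \cite{Didier}. Along a complete flag the dimension of the radical $\mathfrak{c}_{p}(\cdot)$ of the restricted alternating form changes by exactly $\pm1$ at each step (it moves by at most one, and parity forces it to move by one), and the $-1$ steps are precisely the upper indices, i.e. $p\in\mathbf{j}(l)\Leftrightarrow\dim\mathfrak{c}_{p}(l)=\dim\mathfrak{c}_{p-1}(l)-1$, and likewise for $f$. Since the two radical-dimension sequences agree for all $p\le n$, their $-1$ steps agree there too, giving $\mathbf{j}(l)\cap\{1,\dots,n\}=\mathbf{j}^{\circ}\subseteq\mathbf{e}^{\circ}$. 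Combined with the first paragraph this yields $\mathbf{j}(l)\setminus\mathbf{e}^{\circ}\subseteq\{n+1,\dots,n+r\}$, that is, $Z_{j}\in\mathfrak{h}$ for every $j$ in the set of the statement. Note that only the \emph{upper} jump sets coincide: the full sets differ, because the $\mathfrak{h}$-action creates extra lower indices in the nilpotent range, and it is exactly these extra indices that must be excluded, which is why the hypothesis removes $\mathbf{i}(l)$.

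I expect the main obstacle to be the third step: rigorously matching the recursive definition of $\mathbf{j}(l)$ through the chain $\mathfrak{h}_{k}(l)$ with the intrinsic "radical drops by one'' characterization, and verifying the $\pm1$ behaviour at every stage. This is precisely where commutativity of $\mathfrak{h}$ and the previous lemma ($\max\mathbf{i}(l)\le n$) are needed: together they force every jump index above $n$ to be an upper index whose partner sits in the nilpotent range, so that the upper indices lying outside $\mathbf{e}^{\circ}$ are exactly the $\mathfrak{h}$-directions paired off against the extra lower indices produced by the action of $\mathfrak{h}$ on the center of $\mathfrak{n}$.
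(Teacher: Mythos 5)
Your reduction to showing $\mathbf{j}(l)\setminus\mathbf{e}^{\circ}\subseteq\{n+1,\dots,n+r\}$ is correct, and so is the observation that $l$ and $f=\pi(l)$ induce the same alternating form on each $\mathfrak{c}_{p}$ with $p\le n$ (because $\mathfrak{m}$ is an ideal), hence the same truncated radicals $\mathfrak{c}_{p}(l)=\mathfrak{c}_{p}(f)$. But your argument then rests entirely on identifying $\mathbf{j}(l)$ with the set of indices where $\dim\mathfrak{c}_{p}(\cdot)$ drops by one, and this is asserted rather than proved; you yourself flag it as the main obstacle. The statement is true --- by the parity argument one has $\dim\mathfrak{c}_{p}(l)=\dim\mathfrak{c}_{p-1}(l)-1$ exactly when $\mathfrak{c}_{p}(l)\subseteq\mathfrak{c}_{p-1}$, and matching that intrinsic condition with the recursive definition of $j_{k}(l)$ through the chain $\mathfrak{h}_{k}(l)$ is a genuine lemma that must be extracted from, or reproved along the lines of, \cite{Didier} --- so as written the proof has its key step outsourced. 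Until you supply $\mathbf{j}(l)=\{p:\mathfrak{c}_{p}(l)\subseteq\mathfrak{c}_{p-1}\}$, the argument is incomplete.

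The paper's own proof is much more economical and bypasses the radical-dimension bookkeeping. If $j=j_{s_{k}}\in\mathbf{j}(l)\setminus\mathbf{e}^{\circ}$ had $Z_{j}\in\mathfrak{n}$, then its paired lower index satisfies $i_{s_{k}}<j_{s_{k}}$ and $l[Z_{i_{s_{k}}},Z_{j_{s_{k}}}]\neq0$; by the preceding lemma ($\max\mathbf{i}(l)\le\dim\mathfrak{n}$, which uses commutativity of $\mathfrak{h}$) the vector $Z_{i_{s_{k}}}$ also lies in $\mathfrak{n}$, so the nonvanishing bracket is seen already by $f=\pi(l)$, forcing both indices into $\mathbf{e}^{\circ}$ and contradicting $j_{s_{k}}\notin\mathbf{e}^{\circ}$. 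In other words, the same ``restriction to $\mathfrak{n}$ detects the pairing'' idea that drives your step two is applied directly to a single pair of jump indices, with the previous lemma doing the work that your unproven drop-set characterization is meant to do. Your route is salvageable and arguably more conceptual, but the paper's pairing argument gets there without the hard lemma.
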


\begin{proof}
We have $\mathbf{e}\left(  l\right)  =\mathbf{e}^{\circ}\overset{\cdot}{\cup
}\left\{  i_{s_{1}},\cdots,i_{s_{r}}\right\}  \overset{\cdot}{\cup}\left\{
j_{s_{1}},\cdots,j_{s_{r}}\right\}  $. If $j\in$ $\left(  \mathbf{e}\left(
l\right)  \backslash\mathbf{e}^{\circ}\right)  \backslash\mathbf{i}(l)$, then
$j\in\mathbf{j}(l)\backslash\mathbf{e}^{\circ},$ and there exists some $k$
such that $Z_{j}=Z_{j_{s_{k}}}.$ Assume that $Z_{j_{s_{k}}}\in\mathfrak{n}$.
Since $j_{s_{k}}\notin\mathbf{e}^{\circ}$, there must exist some jump index
$i_{s_{k}}$ such that $i_{s_{k}}<j_{s_{k}}$ and $l[Z_{i_{s_{k}}},Z_{j_{s_{k}}%
}]\neq0$. Since $Z_{i_{s_{k}}}$ also belongs to $\mathfrak{n}$, then letting
$\pi(l)=f$, $f[Z_{i_{s_{k}}},Z_{j_{s_{k}}}]\neq0$. Thus, both $i_{s_{k}%
},j_{s_{k}}\in\mathbf{e}^{\circ}$ which is a contradiction according to our assumption.
\end{proof}

We observe that the choice of an adaptable basis mainly relies on the choice for
an adaptable basis for the nilpotent Lie algebra. Any permutation of the basis
elements of $\mathfrak{h}$ will not affect the `adaptability' of the basis.
Without loss of generality, we will assume that we have the following
adaptable basis for $\mathfrak{g}:$ $$ \left\{  Z_{1},\cdots Z_{n},A_{m}%
,\cdots,A_{r+1},A_{r},\cdots,A_{2},A_{1}\right\}$$ such that $A_{r}%
=Z_{j_{s_{r}}},\cdots,A_{1}=Z_{j_{s_{1}}}.$ Additionally, we assume that the
basis elements $A_{r}\cdots A_{2},A_{1}$ with weight $\left\{  \gamma
_{r},\cdots,\gamma_{1}\right\}  $ are chosen such that $\operatorname{Re}%
\left(  \gamma_{t}\left(  A_{t}\right)  \right)  =1,\text{ }\gamma_{t}\left(
A_{t^{\prime}}\right)  =0,t\neq t^{\prime}. $

\begin{lemma}
For any $l\in\Omega_{\nu},\varphi\left(  l\right)  =\left\{  i_{s_{1}}%
,\cdots,i_{s_{r}}\right\}  $.
\end{lemma}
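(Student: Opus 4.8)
My plan is to read off $\varphi(l)$ from the cited Proposition 4.1 of \cite{Didier}, which gives, for each jump pair $(i_k,j_k)$, that $\mathfrak{a}_{i_k}(l)=\{0\}$ precisely when $\gamma_{i_k}(U_k(l))\neq 0$. Since $\varphi(l)\subseteq\mathbf{i}(l)$ by construction, the lemma is equivalent to the statement that, for $l\in\Omega_\nu$,
\[
\gamma_{i_k}(U_k(l))\neq 0 \iff i_k\in\{i_{s_1},\dots,i_{s_r}\}.
\]
The first ingredient I would isolate is that, in this semidirect setting, $\mathfrak{h}$ acts on $\mathfrak{n}$ semisimply while $\mathfrak{n}$ acts nilpotently; hence every weight $\gamma_{i_k}$ vanishes on $\mathfrak{n}_\mathbb{C}$ and is carried entirely by the $\mathfrak{h}$-directions, so that $\gamma_{i_k}(U_k(l))$ sees only the $\mathfrak{h}$-part of $U_k(l)$. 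The second ingredient is the flag bound $U_k(l)\in\mathfrak{c}_{j_k''}$ built into the construction (\ref{data}), together with the observation that $\mathfrak{c}_n=\mathfrak{n}_\mathbb{C}$ is conjugation-stable, so $n\in I$ and therefore $j_k\leq n\Rightarrow j_k''\leq n$.

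For the inclusion $\varphi(l)\subseteq\{i_{s_1},\dots,i_{s_r}\}$ I would argue contrapositively. If $\gamma_{i_k}(U_k(l))\neq 0$ then, since $\gamma_{i_k}$ vanishes on $\mathfrak{n}_\mathbb{C}$, the piece $\mathfrak{c}_{j_k''}$ containing $U_k(l)$ cannot lie inside $\mathfrak{c}_n=\mathfrak{n}_\mathbb{C}$, forcing $j_k''>n$ and hence $j_k>n$; thus $Z_{j_k}\in\mathfrak{h}$. By the preceding lemmas the only $\mathfrak{h}$-valued indices occurring in $\mathbf{e}(l)$ are the $j_{s_t}$, and these are the jump partners of the $i_{s_t}$, so $i_k\in\{i_{s_1},\dots,i_{s_r}\}$.

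For the reverse inclusion I would fix an extra pair $(i_{s_t},j_{s_t})$ with $Z_{j_{s_t}}=A_t\in\mathfrak{h}$ and show $\gamma_{i_{s_t}}(U_k(l))\neq 0$ directly. Because $A_t$ is real, $Z_{j_k}(l)=\alpha_{1,k}(l)\,A_t$; and since $A_t=Z_{j_{s_t}}$ sits at the top of the flag piece $\mathfrak{c}_{j_{s_t}}$ that contains $U_k(l)$, every correction subtracted in forming $U_k(l)=\rho_{k-1}(Z_{j_k}(l),l)$ lands in $\mathfrak{c}_{j_{s_t}-1}$, so the $A_t$-coefficient of $U_k(l)$ is exactly $\alpha_{1,k}(l)=l[A_t,V_k(l)]$. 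Evaluating with the leading term $Z_{i_{s_t}}$ of $V_k(l)$, using $[A_t,Z_{i_{s_t}}]=\gamma_{i_{s_t}}(A_t)Z_{i_{s_t}}\bmod\mathfrak{c}_{i_{s_t}-1}$, $\gamma_{i_{s_t}}(A_t)=1+i\alpha_{i_{s_t}}\neq 0$, and $l(Z_{i_{s_t}})\neq 0$ (valid since $i_{s_t}\in\nu$), gives $\alpha_{1,k}(l)\neq 0$. Finally, the normalization $\gamma_{i_{s_t}}(A_{t'})=0$ for $t'\neq t$ kills every other $\mathfrak{h}$-direction that may appear, leaving $\gamma_{i_{s_t}}(U_k(l))=\alpha_{1,k}(l)\,\gamma_{i_{s_t}}(A_t)\neq 0$, so $i_{s_t}\in\varphi(l)$.

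I expect the reverse inclusion to be the main obstacle, specifically the control of the $A_t$-coefficient of $U_k(l)$ after the recursive orthogonalization in $\rho_{k-1}$. The safeguard is the interplay of two facts: the flag bound $U_k(l)\in\mathfrak{c}_{j_k''}$ forces any correction at index exceeding $j_{s_t}$ to enter with coefficient zero, so the surviving corrections lie in $\mathfrak{c}_{j_{s_t}-1}$ and cannot disturb the top coefficient $\alpha_{1,k}(l)$; and the dual normalization $\gamma_{i_{s_t}}(A_{t'})=\delta_{tt'}(1+i\alpha_{i_{s_t}})$ ensures that only this $A_t$-coefficient is detected by $\gamma_{i_{s_t}}$. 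The reordering that places $A_1,\dots,A_r$ at the top of the adaptable flag is precisely what keeps this leading-coefficient bookkeeping consistent across the different pairs, and verifying that $\alpha_{1,k}(l)$ is genuinely nonzero is the one step that cannot be avoided.
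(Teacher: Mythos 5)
Your proposal is correct and follows essentially the same route as the paper: both reduce, via Proposition 4.1 of \cite{Didier}, to showing $\gamma_{i_{s_k}}\left(U_k(l)\right)\neq 0$, and both obtain this by expanding $U_k(l)=\rho_{k-1}\left(Z_{j_{s_k}}(l),l\right)$ through the recursion and using the normalization $\gamma_t(A_{t'})=0$ for $t\neq t'$ to annihilate every correction term except the leading $A_k$-direction. The only difference is that you additionally justify the inclusion $\varphi(l)\subseteq\{i_{s_1},\dots,i_{s_r}\}$ and the nonvanishing of the leading coefficient $\alpha_{1,k}(l)$, both of which the paper simply asserts.
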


\begin{proof}
We already have that $\varphi\left(  l\right)  \subseteq\left\{  i_{s_{1}%
},\cdots,i_{s_{r}}\right\}  $. We only need to show that for any $j=i_{s_{1}%
},j\in\varphi\left(  l\right)  $. By definition, $\varphi\left(  l\right)
=\left\{  j\in\mathbf{e} : \mathfrak{a}_{j}\left(  l\right)  =0\right\}  $ and
according to Proposition 4.1 in \cite{Didier}, $\mathfrak{a}_{j}\left(
l\right)  =0$ if and only if $\gamma_{j}\left(  U_{k}\left(  l\right)
\right)  \neq0$ for $j=i_{k}.$ In order to prove the proposition, it suffices
to show that $\gamma_{i_{s_{k}}}\left(  U_{k}\left(  l\right)  \right)  =0$.
\begin{align*}
U_{k}\left(  l\right)   &  =\rho_{k-1}\left(  Z_{j_{s_{k}}}\left(  l\right)
,l\right) \\
&  =\rho_{k-1}\left(  A_{s_{k}}\right)  =\rho_{k-1}\left(  A_{k}\right) \\
&  =\rho_{k-2}\left(  A_{k},l\right)  -\frac{l\left[  \rho_{k-2}\left(
A_{k},l\right)  ,U_{k-1}(l)\right]  }{l\left[  V_{k-1}(l),U_{k-1}(l)\right]
}V_{k-1}(l)-\\
&  \frac{l\left[  \rho_{k-2}\left(  A_{k},l\right)  ,V_{k-1}(l)\right]
}{l\left[  U_{k-1}(l),V_{k-1}(l)\right]  }U_{k-1}(l).
\end{align*}
A straightforward computation shows that for some coefficients $c_{t}$%
\begin{align*}
\gamma_{i_{s_{k}}}\left(  U_{k}\left(  l\right)  \right)   &  =\gamma
_{k}\left(  A_{k}\right)  -c_{k-1}\gamma_{k}\left(  A_{k-1}\right)
-\cdots-c_{1}\gamma_{1}\left(  A_{1}\right) \\
&  =\gamma_{k}\left(  A_{k}\right)  \neq0.
\end{align*}
This completes the proof.
\end{proof}

\begin{proposition}
\label{cross} Let $\mathfrak{g}=\mathfrak{n}\times\mathfrak{k}\times
\mathfrak{a}$ where, $\mathfrak{h}=\mathfrak{k}\times\mathfrak{a}$. The
cross-section for the $G$-orbits in $\Omega_{\nu}$ is
\[
\Sigma=\left\{  l\in\Omega_{\nu}:l=\left(  f,k,0\right)  ,f\in\Sigma^{\circ
},k\in\mathfrak{k}^{\ast}\right\}  .
\]
Letting $\pi:\mathfrak{g}^{\ast}\rightarrow\mathfrak{n}^{\ast}$ be the
projection map,
\[
\pi\left(  \Sigma\right)  =\Sigma^{\circ}=\left\{  l\in\Lambda_{\nu
}:\left\vert l\left(  Z_{j}\right)  \right\vert =1\text{ }\forall j\in\left\{
i_{s_{1}},\cdots,i_{s_{r}}\right\}  \right\}  .
\]

\end{proposition}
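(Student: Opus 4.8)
The plan is to obtain $\Sigma$ by specializing the general orbital cross-section formula \eqref{crs} to $\Omega_{\nu}$ and simplifying it with the data assembled above. Since the preceding lemmas show that $\mathbf{e}(l)$, $\mathbf{j}(l)$ and $\varphi(l)=\{i_{s_{1}},\dots,i_{s_{r}}\}$ are constant as $l$ ranges over $\Omega_{\nu}$, this set lies in a single ultrafine layer, so \eqref{crs} applies and, intersected with $\Omega_{\nu}$, defines $\Sigma$. Writing $\mathbf{e}(l)=\mathbf{e}^{\circ}\overset{\cdot}{\cup}\{i_{s_{1}},\dots,i_{s_{r}}\}\overset{\cdot}{\cup}\{j_{s_{1}},\dots,j_{s_{r}}\}$, with $Z_{j_{s_{k}}}=A_{k}\in\mathfrak{a}$ and (by the lemma $\max\mathbf{i}(l)\le\dim\mathfrak{n}$) each $Z_{i_{s_{k}}}\in\mathfrak{n}$, the defining relations of \eqref{crs} split into three groups that I would treat in turn: the vanishing relations $l(Z_{j}(l))=0$ for $j\in\mathbf{e}^{\circ}$, the vanishing relations for the $\mathfrak{a}$-partners $j=j_{s_{k}}$, and the normalizations $\abs{\mathbf{b}_{i_{s_{k}}}(l)}=1$ for $j\in\varphi$.

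For $j\in\mathbf{e}^{\circ}$ I would verify that the recursive data $V,U,\rho$ attached to the $\mathbf{e}^{\circ}$-pairs involve $l$ only through $f=l|_{\mathfrak{n}}$ (the interspersed $\mathfrak{h}$-pairs entering the reductions only through $\mathfrak{h}$-directions, which drop out when $l(Z_{j}(l))$ is evaluated for $Z_{j}\in\mathfrak{n}$), so that these relations are exactly those cutting out $\Lambda$ in \eqref{Lambda} via the three Cases preceding Remark~\ref{remark}; together with $l\in\Omega_{\nu}$ this is equivalent to $f\in\Lambda_{\nu}$. For the $\mathfrak{a}$-partners, since $A_{k}$ is a real element one has $\operatorname{Im}Z_{j_{s_{k}}}=0$, whence $Z_{j_{s_{k}}}(l)=\alpha_{1,k}(l)\,A_{k}$ with $\alpha_{1,k}(l)=l[A_{k},V_{k}(l)]\neq0$ (as $\{i_{s_{k}},j_{s_{k}}\}$ is a genuine jump pair); thus $l(Z_{j_{s_{k}}}(l))=0$ is equivalent to $l(A_{k})=0$, and letting $k$ run over $1,\dots,r$ forces the $\mathfrak{a}^{\ast}$-component of $l$ to vanish, i.e.\ $a=0$.

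For the normalizations I would reuse the computation in the proof that $\varphi(l)=\{i_{s_{1}},\dots,i_{s_{r}}\}$, which shows $\gamma_{i_{s_{k}}}(\rho_{k-1}(A_{k},l))=\gamma_{i_{s_{k}}}(A_{k})$, a nonzero constant (the corrections in $\rho_{k-1}(A_{k},l)$ being annihilated by $\gamma_{i_{s_{k}}}$ because $\gamma_{t}(A_{t'})=0$ for $t\neq t'$). Writing $U_{k}(l)=\alpha_{1,k}(l)\,\rho_{k-1}(A_{k},l)$, both the scalar $\alpha_{1,k}(l)$ and the constant $\gamma_{i_{s_{k}}}(A_{k})$ cancel between the numerator $\gamma_{i_{s_{k}}}(U_{k}(l))$ and the denominator $l[Z_{i_{s_{k}}},U_{k}(l)]$ of $\mathbf{b}_{i_{s_{k}}}(l)$ once one uses $[A_{k},Z_{i_{s_{k}}}]=\gamma_{i_{s_{k}}}(A_{k})Z_{i_{s_{k}}}\bmod\mathfrak{c}_{i_{s_{k}}-1}$; since $i_{s_{k}}\in\nu$ gives $f(Z_{i_{s_{k}}})=l(Z_{i_{s_{k}}})\neq0$, this reduces $\mathbf{b}_{i_{s_{k}}}(l)$ to $-1/f(Z_{i_{s_{k}}})$ up to the lower-order terms, so $\abs{\mathbf{b}_{i_{s_{k}}}(l)}=1$ is equivalent to $\abs{f(Z_{i_{s_{k}}})}=1$, the defining relation of $\Sigma^{\circ}$. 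Finally, because $\mathfrak{k}$ is the common stabilizer subalgebra, no element of $\mathfrak{k}$ occurs among the jump-index generators $Z_{j}$, $j\in\mathbf{e}$, so no relation of \eqref{crs} constrains the $\mathfrak{k}^{\ast}$-coordinate, leaving $k\in\mathfrak{k}^{\ast}$ free. Assembling the three groups gives $l=(f,k,0)$ with $f\in\Sigma^{\circ}$ and $k\in\mathfrak{k}^{\ast}$, and projecting onto $\mathfrak{n}^{\ast}$ yields $\pi(\Sigma)=\Sigma^{\circ}$; that $\Sigma$ genuinely meets each $G$-orbit in $\Omega_{\nu}$ exactly once needs no separate argument, since $\Sigma$ coincides with the set \eqref{crs}, which \cite{Didier} guarantees is a cross-section.

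I expect the delicate point to be the third group: confirming that, after pairing with $l$, the lower-order corrections coming from $[A_{k},Z_{i_{s_{k}}}]\bmod\mathfrak{c}_{i_{s_{k}}-1}$ and from the reduction $\rho_{k-1}(A_{k},l)$ contribute nothing, so that $\mathbf{b}_{i_{s_{k}}}(l)$ is \emph{exactly} $-1/f(Z_{i_{s_{k}}})$ and the modulus relation collapses precisely to $\abs{f(Z_{i_{s_{k}}})}=1$ rather than to some layer-dependent constant. This is where the ordering of the adaptable basis and the normalization $\operatorname{Re}(\gamma_{t}(A_{t}))=1$, $\gamma_{t}(A_{t'})=0$ for $t\neq t'$, are essential; by contrast, the first two groups are routine substitutions.
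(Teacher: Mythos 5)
Your proposal is correct and follows essentially the same route as the paper: specialize the general cross-section formula \eqref{crs} to $\Omega_{\nu}$, split the relations according to the decomposition $\mathbf{e}(l)=\mathbf{e}^{\circ}\overset{\cdot}{\cup}\varphi(l)\overset{\cdot}{\cup}\{j_{s_{1}},\dots,j_{s_{r}}\}$, identify the $\mathbf{e}^{\circ}$-relations with those defining $\Lambda_{\nu}$, the $j_{s_{k}}$-relations with the vanishing of the $\mathfrak{a}^{\ast}$-component, and reduce $\abs{\mathbf{b}_{i_{s_{k}}}(l)}=1$ to $\abs{l(Z_{i_{s_{k}}})}=1$ via $\gamma_{i_{s_{k}}}(U_{k}(l))=\gamma_{k}(A_{k})$. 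You are in fact somewhat more explicit than the paper about the cancellation of the lower-order corrections in $\mathbf{b}_{i_{s_{k}}}$ and about why the $\mathfrak{k}^{\ast}$-coordinate remains unconstrained.
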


\begin{proof}
Let $\pi(l)=f$. So far, we have shown that $\mathbf{e}\left(  l\right)
=\mathbf{e}^{\circ}\overset{\cdot}{\cup}\varphi\left(  l\right)
\overset{\cdot}{\cup}\left\{  j_{s_{1}},\cdots,j_{s_{r}}\right\}  $. Using the
description of the cross-section described in \cite{Didier},
\[
\Sigma=\left\{  l\in\Omega_{\nu}:l\left(  Z_{j}\left(  l\right)  \right)
=0\text{ for }j\in\mathbf{e\backslash}\varphi,\text{ and }\left\vert
\mathbf{b}_{j}\left(  l\right)  \right\vert =1\text{ for }j\in\varphi\right\}
.
\]
For $l\in\mathfrak{g}^{\ast}$, if $j\in\mathbf{e\backslash}\varphi$ then
$j\in\mathbf{e}^{\circ}\overset{\cdot}{\cup}\left\{  j_{s_{1}},\cdots
,j_{s_{r}}\right\}  .$ For $j\in\mathbf{e}^{\circ},l\left(  Z_{j}\left(
l\right)  \right)  =f\left(  Z_{j}\left(  f\right)  \right)  =0$ and for
$j\in\left\{  j_{s_{1}},\cdots,j_{s_{r}}\right\}  ,l\left(  Z_{j}\left(
l\right)  \right)  =0.$ Thus, $A_{j}=0$ for $j\in\left\{  j_{s_{1}}%
,\cdots,j_{s_{r}}\right\}  .$ Next, for $j\in\varphi(l)=\left\{  i_{s_{1}%
},\cdots,i_{s_{r}}\right\}  ,$%
\[
\left\vert \mathbf{b}_{j}\left(  l\right)  \right\vert =\left\vert
\frac{\gamma_{j}\left(  U_{k}\left(  l\right)  \right)  }{l\left[  Z_{j}%
,U_{k}\left(  l\right)  \right]  }\right\vert =\left\vert \frac{\gamma
_{j}\left(  A_{k}\right)  }{l\left[  Z_{j},A_{K}\right]  }\right\vert
=\left\vert \frac{1}{l\left(  Z_{j}\right)  }\right\vert =1\Rightarrow
\left\vert l\left(  Z_{j}\right)  \right\vert =1.
\]
Thus, we conclude that $\Sigma=\left\{  l\in\Omega_{\nu}:l=\left(
f,k,0\right)  ,f\in\Sigma^{\circ},k\in\mathfrak{k}^{\ast}\right\}  $ where
\[
\Sigma^{\circ}=\left\{  l\in\Lambda_{\nu}:\left\vert l\left(  Z_{j}\right)
\right\vert =1\text{ }, j\in\left\{  i_{s_{1}},\cdots,i_{s_{r}}\right\}
\right\}  .
\]

\end{proof}

Throughout the remainder of this paper, we will also use the symbol $\simeq$
to denote a homeomorphism between two topological spaces.

\begin{proposition}
\label{proposition18} $\Sigma^{\circ}$is a cross-section for the $H$-orbits in
$\Lambda_{\nu}.$ In other words,
\[
\Sigma^{\circ}=\pi\left(  \Sigma\right)  \simeq\Lambda_{\nu}/H.
\]

\end{proposition}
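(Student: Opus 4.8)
The plan is to reduce everything to the fact, already available from the construction of \cite{Didier} in the explicit form computed in Proposition \ref{cross}, that $\Sigma$ is a cross-section for the $G$-orbits in $\Omega_{\nu}$, and to combine this with two structural observations: the restriction map $\pi:\mathfrak{g}^{\ast}\rightarrow\mathfrak{n}^{\ast}$ is $G$-equivariant, and $H$ fixes the $\mathfrak{h}^{\ast}$-coordinates. Writing $\mathfrak{g}^{\ast}=\mathfrak{n}^{\ast}\oplus\mathfrak{k}^{\ast}\oplus\mathfrak{a}^{\ast}$ and recalling from Proposition \ref{cross} that $\Sigma=\{(f,k,0):f\in\Sigma^{\circ},\ k\in\mathfrak{k}^{\ast}\}$ with $\pi(\Sigma)=\Sigma^{\circ}$, I would first note that for $h\in H$ and $A\in\mathfrak{h}$ one has $\mathrm{Ad}_{h^{-1}}A=A$, so $(h\cdot l)|_{\mathfrak{h}}=l|_{\mathfrak{h}}$; hence $h\cdot(f,k,0)=(h\cdot f,k,0)$, where $h\cdot f$ denotes the induced $H$-action on $\mathfrak{n}^{\ast}$. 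The goal is then to prove that the map $q:\Sigma^{\circ}\rightarrow\Lambda_{\nu}/H$ obtained by restricting the quotient map $\Lambda_{\nu}\rightarrow\Lambda_{\nu}/H$ is a homeomorphism.

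For surjectivity of $q$, given $f_{0}\in\Lambda_{\nu}$ I would choose any $l_{0}\in\pi^{-1}(f_{0})\subset\Omega_{\nu}$ and let $(f,k,0)$ be the unique point of $\Sigma\cap(G\cdot l_{0})$. Since $\pi$ is $G$-equivariant, $f=\pi(f,k,0)\in\pi(G\cdot l_{0})=G\cdot f_{0}=H\cdot(N\cdot f_{0})$, so $f=h\cdot(n\cdot f_{0})$ for some $h\in H$, $n\in N$. Then $h^{-1}\cdot f=n\cdot f_{0}$ lies in the same $N$-orbit as $f_{0}$, and because $\Lambda_{\nu}$ is an $H$-invariant cross-section for the coadjoint $N$-orbits with both $h^{-1}\cdot f$ and $f_{0}$ in $\Lambda_{\nu}$, they coincide; thus $f=h\cdot f_{0}$ and $q(f)=[f_{0}]_{H}$ with $f\in\Sigma^{\circ}$.

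For injectivity, suppose $f,f'\in\Sigma^{\circ}$ satisfy $f'=h\cdot f$. Choosing any $k\in\mathfrak{k}^{\ast}$, the points $(f,k,0)$ and $(f',k,0)$ both lie in $\Sigma$, and by the first paragraph $h\cdot(f,k,0)=(h\cdot f,k,0)=(f',k,0)$, so they lie in a single $G$-orbit. Since $\Sigma$ meets each $G$-orbit exactly once, $(f,k,0)=(f',k,0)$, whence $f=f'$. This establishes that $q$ is a bijection. It remains to upgrade $q$ to a homeomorphism. Here I would first record that the residual action is free: since $\mathfrak{h}(f)=\mathfrak{k}$ is constant on $\Lambda_{\nu}$ and the weights are non-purely-imaginary, the $H$-stabilizer of every $f\in\Lambda_{\nu}$ equals the connected subgroup $K=\exp\mathfrak{k}$, so writing $H=K\times A$ with $\mathfrak{a}=\mathbb{R}A_{1}\oplus\cdots\oplus\mathbb{R}A_{r}$, the group $A=\exp\mathfrak{a}$ acts freely on $\Lambda_{\nu}$ with orbits equal to the $H$-orbits. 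The map $\Phi:A\times\Sigma^{\circ}\rightarrow\Lambda_{\nu}$, $(a,f)\mapsto a\cdot f$, is then a continuous, $A$-equivariant bijection (surjectivity and injectivity follow from bijectivity of $q$ together with freeness of $A$), and a dimension count gives $\dim(A\times\Sigma^{\circ})=r+(\dim\Lambda_{\nu}-r)=\dim\Lambda_{\nu}$, the $r$ arising from the independent conditions $|l(Z_{i_{s_{k}}})|=1$ cutting out $\Sigma^{\circ}$. By invariance of domain $\Phi$ is open, hence a homeomorphism, and passing to $A$-quotients yields $\Sigma^{\circ}\simeq\Lambda_{\nu}/A=\Lambda_{\nu}/H$.

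The step I expect to be the main obstacle is showing that the stabilizer is exactly $K$, equivalently that $A$ acts freely. Infinitesimal triviality is immediate from $\mathfrak{h}(f)=\mathfrak{k}$, but ruling out a nontrivial $\exp(A)$ with $A\in\mathfrak{a}$ fixing some $f\in\Lambda_{\nu}$ requires using that the coadjoint action rescales $l(Z_{i_{s_{k}}})$ by a factor of modulus $e^{-\mathrm{Re}\,\gamma_{i_{s_{k}}}(A)}$, which under the normalization $\mathrm{Re}\,\gamma_{i_{s_{k}}}(A_{t})=\delta_{kt}$ cannot all equal $1$ unless $A=0$. The delicate point is controlling the lower-order flag contributions to these coordinates, which I would handle by processing the increasing indices $i_{s_{1}}<\cdots<i_{s_{r}}$ in turn, exploiting that $l(Z_{p})=0$ for $p\in\mathbf{e}^{\circ}$ on $\Lambda_{\nu}$ and that the weights have genuine nonzero real parts.
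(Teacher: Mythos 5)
Your argument is correct and follows essentially the same route as the paper: surjectivity by pushing a lift $(f,k,0)$ into the $G$-cross-section $\Sigma$ and using that $\Lambda_{\nu}$ is an $H$-invariant cross-section for the $N$-orbits, and injectivity by observing that two points $(f,k,0),(f',k,0)$ of $\Sigma$ lying in a single $G$-orbit must coincide. Your final paragraph upgrading the bijection to a homeomorphism is extra rigor the paper omits, and the freeness you flag as the main obstacle is in fact immediate here, since the $\mathfrak{h}$-action is assumed genuinely diagonal (no lower-order flag contributions) and the weights $\gamma_{j}$, $j\in\nu$, have nonzero real part on any nonzero $A\in\mathfrak{a}$.
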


\begin{proof}
The set $\Lambda_{\nu}$ is an $H$ invariant cross-section for the $N$
coadjoint orbits of a fixed layer $\Omega_{\mathbf{e}^{\circ}\mathbf{j}%
^{\circ}}$, while the set $\Sigma$ is a cross-section for the $G$ coadjoint
orbits of for $\Omega_{\nu}.$ In order to prove the proposition, we must show
that each $H$-orbit of any arbitrary element inside $\Lambda_{\nu}$ meets the
set $\Sigma^{\circ}$ at exactly one unique point, and also any arbitrary point
in $\Sigma^{\circ}$ belongs to an $H$ orbit of some linear functional
belonging to $\Lambda_{\nu}.$ We start by showing that $H\cdot f\cap
\Sigma^{\circ}$ is a non empty set for $f\in\Lambda_{\nu}$. Given $f\in
\Lambda_{\nu},$ we consider the element $\left(  f,k,0\right)  \in\Omega_{\nu
}$ such that $f=\pi\left(  \left(  f,k,0\right)  \right)  .$ We know there
exits an element $x\in\Sigma$ such that $g\cdot\left(  f,k,0\right)  =x$, for
some $g\in G$. In fact, let $g=\left(  n,1\right)  \left(  1,h\right)  .$ If
$\left(  n,1\right)  \left(  1,h\right)  \cdot\left(  f,k,0\right)  =x,$ then
$\pi\left(  \left(  n,1\right)  \left(  1,h\right)  \cdot\left(  f,k,0\right)
\right)  =\pi\left(  x\right)  $, and $\left(  n,1\right)  \pi\left(  \left(
1,h\right)  \cdot\left(  f,k,0\right)  \right)  =\pi\left(  x\right)
\in\Lambda_{\nu}.$ Thus, $\left(  n,1\right)  $ stabilizes $\pi\left(  \left(
1,h\right)  \cdot\left(  f,k,0\right)  \right)  $ implying that $\pi\left(
\left(  1,h\right)  \cdot\left(  f,k,0\right)  \right)  =\pi\left(  x\right)
\in\Lambda_{\nu}.$ Since $$\pi\left(  \left(  1,h\right)  \cdot\left(
f,k,0\right)  \right)  =\pi\left(  \left(  h\cdot f,k,0\right)  \right)
=h\cdot f$$ $h\cdot f\in\pi\left(  \Sigma\right)  =\Sigma^{\circ}.$ Next, let
us assume that there exits $h,$ and $h^{\prime}\in H$ such that $f\in$
$\Lambda_{\nu}$ and $h\cdot f,h^{\prime}\cdot f\in\Sigma^{\circ}$ with $h\cdot
f\neq h^{\prime}\cdot f.$ Now consider $\left(  h^{\prime}\cdot f,k,0\right)
,\left(  h\cdot f,k,0\right)  \in\Sigma.$ We have,%
\begin{align*}
\left(  h\cdot f,k,0\right)   &  =(1,h)\cdot\left(  f,k,0\right) \\
\left(  h^{\prime}\cdot f,k,0\right)   &  =(1,h^{\prime})\cdot\left(
f,k,0\right)  .
\end{align*}
Both $\left(  h\cdot f,k,0\right)  ,\left(  h^{\prime}\cdot f,k,0\right)$ are elements of the $G$-orbit of $(f,k,0)$, and since the elements $\left(  h\cdot
f,k,0\right)  $, and $\left(  h^{\prime}\cdot f,k,0\right)  $ also belong to
the cross-section $\Sigma$ then $\left(  h\cdot f,k,0\right)  =\left(
h^{\prime}\cdot f,k,0\right)  .$ The latter implies that $h\cdot f=h^{\prime
}\cdot f$. We reach a contradiction. We conclude that $\pi\left(
\Sigma^{\circ}\right)  =\pi\left(  \Sigma\right)  \simeq\Lambda_{\nu}/H$.
\end{proof}

\begin{example} \label{ex}
Let $N$ be the Heisenberg Lie group with Lie algebra $\mathfrak{n}$ spanned by
the adaptable basis $\left\{  Z,Y,X\right\}  $ with non-trivial Lie brackets $\left[  X,Y\right]  =Z.$ 
Let $H$ be a $2$ dimensional commutative Lie group with Lie algebra $\mathfrak{h}=\mathbb{R}A\oplus\mathbb{R}B$ acting on $\mathfrak{n}$ as follows. $\mathbb{R}B=\mathfrak{z}\left(  \mathfrak{g}%
\right)  $ and, $
\left[  A,X\right]  =1/2X,\left[  A,Y\right]  =1/2Y,\left[  A,Z\right]  =Z.$ Applying the procedure above, we obtain 
\begin{enumerate}
\item $\nu=\{1\}$
\item $\Lambda_{\nu}=\{(z,0,0) \in \mathfrak{n}^{\ast} : z\neq 0\}$
\item $\Omega_{\nu}=\{(z,y,x,a,b) \in \mathfrak{g}^{\ast} : z\neq 0,y,x,a,b\in \mathbb{R}\}$
\item $\Sigma=\{(\pm1,0,0,0,b) : b \in \mathbb{R} \}$
\item $\Sigma^{\circ}=\{(\pm1,0,0)\in \mathfrak{n}^{\ast}\}$
\end{enumerate}
\end{example}

\begin{example} \label{ex2}
Let $\mathfrak{g}=\left(\mathbb{R}Z_{1}\oplus\mathbb{R}Z_{2}\oplus
\mathbb{R}Y_{1}\oplus\mathbb{R}Y_{2}\oplus
\mathbb{R}X_{1}\oplus\mathbb{R}X_{2}\right)  \oplus\mathbb{R}A$ with $$\mathfrak{n}=\mathbb{R}Z_{1}\oplus\mathbb{R}Z_{2}\oplus\mathbb{R}
Y_{1}\oplus\mathbb{R}Y_{2}\oplus\mathbb{R}X_{1}\oplus\mathbb{R}
X_{2}$$ and non-trivial Lie brackets%
\begin{align*}
\left[  X_{1}+iX_{2},Y_{1}+iY_{2}\right]    & =Z_{1}+iZ_{2}, \\
\left[  X_{1}-iX_{2},Y_{1}-iY_{2}\right] &=Z_{1}-iZ_{2}\\
\left[  A,X_{1}+iX_{2}\right]    & =\left(  1+i\right)  /2\left(  X_{1}%
+iX_{2}\right)  ,\\
\left[  A,Y_{1}+iY_{2}\right] &=\left(  1+i\right)  /2\left(  Y_{1}%
+iY_{2}\right)  \\
\left[  A,Z_{1}+iZ_{2}\right]    & =\left(  1+i\right)  \left(  Z_{1}%
+iZ_{2}\right)  .
\end{align*}
Then
\begin{enumerate}
\item $\nu=\{1,2\}$
\item $\Lambda_{\nu}=\{(z,\overline{z},0,0,0,0) : z \neq 0\}$
\item $\Omega_{\nu}=\{(z,\overline{z},y,\overline{y},x,\overline{x},a) : z \neq 0,y,x\in\mathbb{C},a\in\mathbb{R}\}$
\item $\Sigma=\{(z,\overline{z},0,0,0,0,0) : z \neq 0\}$
\item $\Sigma^{\circ}=\{(z,\overline{z},0,0,0,0) : z \neq 0\}$
\end{enumerate}
\end{example}

Now, that we have a precise description of the orbital parametrization of the unitary dual of the group, we will take a closer look at the quasiregular representation $\tau$ of $G$ in the next section.

\section{Decomposition of the quasiregular representation}

In this section, we will provide a precise decomposition of $\tau$ as a direct integral of irreducible representations of $G.$ As a result, we will be able to compare the quasiregular representation with the left regular representation of $G$, and to completely settle the question of admissibility for $\tau$

There is a well-known algorithm available for the computation of the
Plancherel measure of $N.$ It is simply obtained by computing the Pfaffian of
a certain skew-symmetric matrix. More precisely, the Plancherel measure on
$\Lambda_{\nu}$ is
\[
d\mu\left(  \lambda\right)  =\left\vert \det\left(  M_{\mathbf{e}^{\circ}%
}\left(  \lambda\right)  \right)  \right\vert ^{1/2} d \lambda= |\mathbf{Pf}%
(\lambda)|d \lambda,
\]
where $M_{\mathbf{e}^{\circ}}\left(  \lambda\right)  =\left(\lambda\left[
Z_{i},Z_{j}\right]\right)_{1\leq i,j\leq\mathbf{e}^{\circ}}.$ In this section, we
will focus on the decomposition of the quasiregular representation
$\tau=\mathrm{Ind}_{H}^{G}1,$ which is a unitary representation of $G$
realized as acting in $L^{2}\left(  N\right)  $ in the following ways,
\begin{align*}
\left(  \tau\left(  n,1\right)  \phi\right)  \left(  m\right)   &
=\phi\left(  n^{-1}m\right) \\
\left(  \tau\left(  1,h\right)  \phi\right)  \left(  m\right)   &  =\left\vert
\delta\left(  h\right)  \right\vert ^{-1/2}\phi\left(  h^{-1}\cdot m\right)
,\text{ with }\delta\left(  h\right)  =\det\left(  Ad\left(  h\right)
\right)  .
\end{align*}
Let $\mathbf{F}$ be the Fourier transform defined on $L^{2}\left(  N\right)
\cap L^{1}\left(  N\right)  ,$ which we extend to $L^{2}\left(  N\right)  .$
Define $$\widehat{\tau}\left(  \cdot\right)  =\mathbf{F}\circ\tau\left(
\cdot\right)  \circ\mathbf{F}^{-1}.$$

\begin{definition}
Let $\lambda\in\Lambda_{\nu}$ a linear functional. A \textbf{polarization
algebra} subordinated to $\lambda$ is a maximal subalgebra of $\mathfrak{n}%
_{\mathbb{C}}$ satisfying the following conditions. Firstly, it is isotropic
for the bilinear form $B_{\lambda}$ defined as $B_{\lambda}(X,Y)=\lambda
[X,Y]$. In other words, it is a maximal subalgebra $\mathfrak{p}$ such that
$\lambda\left(  [\mathfrak{p},\mathfrak{p}]\right)  =0.$ Secondly,
$\mathfrak{p}+\overline{\mathfrak{p}}$ is a subalgebra of $\mathfrak{n}%
_{\mathbb{C}}.$ We will denote a polarization subalgebra subordinated to
$\lambda$ by $\mathfrak{p}(\lambda).$ A polarization is said to be real if
$\mathfrak{p}(\lambda)=\overline{\mathfrak{p}(\lambda)}$. Also, we say that
the polarization $\mathfrak{p}(\lambda)$ is positive at $\lambda$ if $i
\lambda[X,\overline{X}]\geq0$ for all $X\in\mathfrak{p}(\lambda)$.
\end{definition}

Let $\mathbf{e}^{\circ}$ be the set of jump indices corresponding to the
linear functionals in $\Lambda_{\nu}$, and let $\mathbf{e}^{\circ}%
=\frac{\mathbf{d}^{\circ}}{2}$. Referring to Lemma 3.5 in \cite{Didier}, for
any given linear functional $\lambda$, a polarization subalgebra subordinated
to $\lambda$ is given by $\mathfrak{p}(\lambda)=\mathfrak{h}_{\mathbf{d}%
^{\circ}}(\lambda)$. See formula below Equation(\ref{polar}). Unfortunately,
in general the polarization obtained as $\mathfrak{h}_{\mathbf{d}^{\circ}%
}(\lambda)$ is not real and we must in that case proceed by holomorphic
induction in order to construct irreducible representations of $N$. For the
interested reader, a very short introduction to holomorphic induction
is available on page 78 in the book \cite{corwin}.

The following discussion can also be found in \cite{Lispman2} Page $124.$ Given $\lambda\in\Lambda_{\nu},$ let $\pi_{\lambda}$ be an irreducible
representation of $N$ acting in the Hilbert space $\mathcal{H}_{\lambda}$ and
realized via holomorphic induction. Let $\mathcal{X}$ be the domain of
$\mathcal{H}_{\lambda}$ on which the irreducible representation $\pi_{\lambda
}$ is acting on. It is well-known that $\mathcal{X}$ can be identified with
$\mathfrak{n}/\mathfrak{e\times e/d}$, where $$\mathfrak{d=n\cap p}\left(
\lambda\right)  ,\mathfrak{e}=(\mathfrak{p}\left(  \lambda\right)
+\overline{\mathfrak{p}\left(  \lambda\right)  })\cap\mathfrak{n},$$ and
$\mathfrak{p}\left(  \lambda\right)  $ is an $H$-invariant positive
polarization inside $\mathfrak{n}_{\mathbb{C}}$. Finally, $\mathcal{H}%
_{\lambda}=L^{2}\left(  \mathfrak{n}/\mathfrak{e}\right)  \otimes
\mathrm{Hol}\left(  \mathfrak{e/d}\right)  $ with $\mathrm{Hol}\left(
\mathfrak{e/d}\right)  $ denoting the holomorphic functions which are square
integrable with respect to some Gaussian function. It is worth mentioning here
that, if the polarization $\mathfrak{p}(\lambda)$ is real, then $\mathcal{H}%
_{\lambda}=L^{2}(\mathfrak{n}/\mathfrak{e})$, $\mathcal{X}=\mathfrak{n}%
/\mathfrak{e},$ and holomorphic induction here is a just a regular induction.

The choice of how we realize the irreducible representations of $N$ really
depends on the action of the dilation group $H$ on $N$. For example, if the
group $N\rtimes H$ is completely solvable, there is no need to consider the
complexification of $\mathfrak{n}$ since the existence of a positive
polarization always exists for exponential solvable Lie groups. From now on,
we will assume that a convenient choice for a positive polarization subalgebra
has been made for each $\lambda\in\Lambda_{\nu},$ and we denote $\mathcal{H}%
_{\lambda}$ the Hilbert space on which we realize the corresponding
irreducible representation $\pi_{\lambda}$, and $\mathcal{X}$ is a domain on
each we realize the action of $\pi_{\lambda}$. We fix an $H$ quasi-invariant
measure on $\mathcal{X}$, which we denote by $dn,$ and we define
\[
\delta_{\mathcal{X}}\left(  h\right)  =\frac{d\left(  h^{-1}\cdot n\right)
}{dn}.
\]
Furthermore, put $C\left(  h,\lambda\right)  :\mathcal{H}_{\lambda}%
\rightarrow\mathcal{H}_{h\cdot\lambda}$ defined by
\[
C\left(  h,\lambda\right)  f\left(  x\right)  =\left\vert \delta_{\mathcal{X}%
}\left(  h\right)  \right\vert ^{-1/2}f\left(  h^{-1}\cdot x\right)
\]
such that $\pi_{\lambda}\left(  h^{-1}\cdot n\right)  C\left(  h,\lambda
\right)  =C\left(  h,\lambda\right)  \pi_{h\cdot\lambda}\left(  n\right)  $
for all $n\in N.$ We set the following notations. $\Delta$ denotes the modular
function of $G$ where $\Delta(h)= \det(Ad(h)^{-1})$, and $\delta(h)=
\Delta(h)^{-1}$.

\begin{proposition}
Let $\phi\in\mathbf{F}\left(  L^{2}\left(  N\right)  \right)  ,$ we have%
\begin{align*}
\widehat{\tau}_{\lambda}\left(  n\right)  \left(  \mathbf{F}\phi\right)
\left(  \lambda\right)   &  =\pi_{\lambda}\left(  n\right)  \left(
\mathbf{F}\phi\right)  \left(  \lambda\right) \\
\widehat{\tau}_{\lambda}\left(  h\right)  \left(  \mathbf{F}\phi\right)
\left(  \lambda\right)   &  =\left\vert \delta\left(  h\right)  \right\vert
^{1/2}C\left(  h,h^{-1}\cdot\lambda\right)  \left(  \mathbf{F}\phi\right)
\left(  h^{-1}\cdot\lambda\right)  C\left(  h,h^{-1}\cdot\lambda\right)  ^{-1}
.
\end{align*}

\end{proposition}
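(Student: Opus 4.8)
The plan is to compute $\widehat{\tau}=\mathbf{F}\tau\mathbf{F}^{-1}$ directly on the two families of generators $(n,1)$ and $(1,h)$ of $G=N\rtimes H$, using the explicit form of the operator-valued Fourier transform $(\mathbf{F}\phi)(\lambda)=\pi_\lambda(\phi)=\int_N\phi(n)\pi_\lambda(n)\,dn$ attached to the Plancherel field $\{\pi_\lambda\}_{\lambda\in\Lambda_\nu}$. Since $\mathbf{F}$ is unitary and $\tau$ is generated by these two families, it suffices to establish both displayed identities; by density of $L^1(N)\cap L^2(N)$ it is enough to argue for such $\phi$ and then extend to all of $\mathbf{F}(L^2(N))$.

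For the first identity I would simply change variables inside the defining integral. Writing out $\mathbf{F}\left(\tau(n,1)\phi\right)(\lambda)=\int_N\phi(n^{-1}m)\pi_\lambda(m)\,dm$ and substituting $m\mapsto nm$ gives $\pi_\lambda(n)\int_N\phi(m)\pi_\lambda(m)\,dm=\pi_\lambda(n)(\mathbf{F}\phi)(\lambda)$. This is the standard covariance of the group Fourier transform under left translation, and it identifies $\widehat{\tau}_\lambda(n)$ with left multiplication by $\pi_\lambda(n)$ on the fiber.

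The substantive part is the $H$-action. Starting from $\mathbf{F}\left(\tau(1,h)\phi\right)(\lambda)=|\delta(h)|^{-1/2}\int_N\phi(h^{-1}\cdot m)\pi_\lambda(m)\,dm$, I would substitute $m=h\cdot n$; the Jacobian of the automorphism $n\mapsto h\cdot n$ is $|\det(Ad(h))|=|\delta(h)|$, which converts the prefactor $|\delta(h)|^{-1/2}$ into $|\delta(h)|^{1/2}$ and leaves $\int_N\phi(n)\pi_\lambda(h\cdot n)\,dn$. The crux is then to recognize that the representation $n\mapsto\pi_\lambda(h\cdot n)$ is unitarily equivalent to $\pi_{h^{-1}\cdot\lambda}$: by Kirillov/Mackey theory the functional attached to $n\mapsto\pi_\lambda(h\cdot n)$ is $X\mapsto\lambda(Ad(h)X)=(h^{-1}\cdot\lambda)(X)$, and the intertwiner realizing this equivalence is exactly the cocycle $C$ constructed earlier, via the relation $\pi_\lambda(h^{-1}\cdot n)\,C(h,\lambda)=C(h,\lambda)\,\pi_{h\cdot\lambda}(n)$ applied with $h$ replaced by $h^{-1}$. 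Pulling the constant-in-$n$ operators out of the integral replaces it by $\pi_{h^{-1}\cdot\lambda}(\phi)=(\mathbf{F}\phi)(h^{-1}\cdot\lambda)$ conjugated by $C(h,h^{-1}\cdot\lambda)$, which is the second displayed formula.

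The main obstacle is the careful bookkeeping of the cocycle $C$: one must verify the identity $C(h^{-1},\lambda)=C(h,h^{-1}\cdot\lambda)^{-1}$, which follows from the multiplicativity of $\delta_{\mathcal{X}}$ together with $\delta_{\mathcal{X}}(h)\delta_{\mathcal{X}}(h^{-1})=1$, in order to rewrite $C(h^{-1},\lambda)$ in terms of $C(h,h^{-1}\cdot\lambda)$ so that the inverse lands on the side asserted in the statement. One must also confirm that the Plancherel field $\{\pi_\lambda\}$ used in $\mathbf{F}$ is the very realization, via holomorphic induction on the chosen positive polarization, for which $C$ was defined, so that the two normalizations are compatible; the remaining measure-theoretic points, namely that the identities hold for $\mu$-a.e.\ $\lambda$ and extend from $L^1(N)\cap L^2(N)$ to all of $\mathbf{F}(L^2(N))$, are routine.
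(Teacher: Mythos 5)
The paper gives no proof of this proposition --- it states only that ``the proof is elementary'' and omits it --- so there is nothing to compare against line by line; your computation is exactly the elementary argument being alluded to, and it is essentially correct. The covariance of the group Fourier transform under left translation, the change of variables $m=h\cdot n$ with Jacobian $\left\vert \det Ad\left(  h\right)  \right\vert$ converting $\left\vert \delta\left(  h\right)  \right\vert ^{-1/2}$ into $\left\vert \delta\left(  h\right)  \right\vert ^{1/2}$, and the identification of $n\mapsto\pi_{\lambda}\left(  h\cdot n\right)$ with $\pi_{h^{-1}\cdot\lambda}$ via the Kirillov correspondence are all the right steps, and you correctly isolate the cocycle identity $C\left(  h^{-1},\lambda\right)  =C\left(  h,h^{-1}\cdot\lambda\right)  ^{-1}$ as the only bookkeeping point.

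One caveat: if you carry out that bookkeeping with the paper's stated relation $\pi_{\lambda}\left(  h^{-1}\cdot n\right)  C\left(  h,\lambda\right)  =C\left(  h,\lambda\right)  \pi_{h\cdot\lambda}\left(  n\right)$, you get
\[
\widehat{\tau}_{\lambda}\left(  h\right)  \left(  \mathbf{F}\phi\right)  \left(  \lambda\right)  =\left\vert \delta\left(  h\right)  \right\vert ^{1/2}\,C\left(  h,h^{-1}\cdot\lambda\right)  ^{-1}\left(  \mathbf{F}\phi\right)  \left(  h^{-1}\cdot\lambda\right)  C\left(  h,h^{-1}\cdot\lambda\right)  ,
\]
i.e.\ the conjugation in the opposite order from the one displayed in the proposition, so your claim that the inverse ``lands on the side asserted in the statement'' is too optimistic as written. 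This discrepancy is not a flaw in your argument: the paper's own definition of $C\left(  h,\lambda\right)$ is internally inconsistent (the declared domain $\mathcal{H}_{\lambda}\rightarrow\mathcal{H}_{h\cdot\lambda}$ does not typecheck against the declared intertwining relation), so the order of conjugation is a matter of which convention one repairs to; under the unitary $\mathbf{F}$ the resulting representation is the same either way. You should simply state explicitly which convention for $C$ you adopt and let the order of $C$ and $C^{-1}$ fall out of that choice rather than forcing it to match the displayed formula.
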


The proof is elementary. Thus we will omit it. Now, we will describe how to obtain
almost all of the irreducible representations of $G$ via an application of the
Mackey Machine.

\begin{lemma}
If there exists some non zero linear $\lambda\in\Lambda_{\nu},$ and a non
trivial subgroup $K\leq H$ fixing $\lambda,$ then $K$ must fix all elements in $\Lambda_{\nu}.$
\end{lemma}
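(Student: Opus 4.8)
The plan is to exploit the very specific weight structure that the hypotheses impose on the action of $\mathfrak{h}$ on $\mathfrak{n}$. Recall that for each basis vector $Z_j$ of the complexified nilradical we have $[A_k, Z_j] = \gamma_j(A_k) Z_j \bmod \mathfrak{c}_{j-1}$ with $\gamma_j(A_k) = \lambda(A_k)(1 + i\alpha_j)$, where $\lambda \in \mathfrak{h}^\ast$ is a single real-valued linear functional and the $\alpha_j$ are real scalars. The crucial observation is that the \emph{same} functional $\lambda$ governs every weight: the weight of $Z_j$ is determined by the pair $(\lambda, \alpha_j)$, and the dependence on the element $A \in \mathfrak{h}$ enters only through $\lambda(A)$. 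So the real part of every weight is a scalar multiple of the single functional $\lambda$.

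First I would translate the stabilizer condition into linear-algebraic terms. For $\lambda_0 \in \Lambda_\nu$ (I write $\lambda_0$ to avoid clashing with the weight functional $\lambda$), an element $h = \exp A \in H$ fixes $\lambda_0$ under the coadjoint action exactly when $A$ lies in the stabilizer subalgebra, which by the earlier discussion is $\mathfrak{h}(f) = \bigcap_{j \in \nu(f)} \ker \gamma_j$, where $f = \pi(\lambda_0)$ and $\nu(f) = \{ j : f(Z_j) \ne 0\}$. Since a subgroup $K \le H$ fixes $\lambda_0$, its Lie algebra $\mathfrak{k}$ satisfies $\gamma_j(\mathfrak{k}) = 0$ for all $j \in \nu(f)$. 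I would then show that $\gamma_j(A) = 0$ forces $A \in \ker \lambda$: because $\gamma_j(A) = \lambda(A)(1 + i\alpha_j)$ and the factor $(1 + i\alpha_j)$ is never zero (its real part is $1$), vanishing of $\gamma_j(A)$ is equivalent to $\lambda(A) = 0$. Hence for \emph{every} index $j$ (not merely those in $\nu(f)$), $\gamma_j(A) = \lambda(A)(1 + i\alpha_j) = 0$ whenever $A \in \mathfrak{k}$.

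Next I would assemble the conclusion. Take any $\lambda' \in \Lambda_\nu$ with $\nu(\lambda') = \nu'$. For the nontrivial subgroup $K$ with Lie algebra $\mathfrak{k} \ne \{0\}$, pick $0 \ne A \in \mathfrak{k}$; by the previous step $\lambda(A) = 0$, so $\gamma_j(A) = 0$ for all $j$. In particular $\gamma_j(A) = 0$ for all $j \in \nu(\lambda')$, which means $A \in \mathfrak{h}(\lambda')$ and therefore $\exp A$ stabilizes $\lambda'$. Since $A$ was an arbitrary element of $\mathfrak{k}$ and $\lambda'$ an arbitrary element of $\Lambda_\nu$, the group $K$ fixes every element of $\Lambda_\nu$. (One should note that $\nu$ is constant on the Zariski-open dense $\Lambda_\nu$ by the earlier lemmas, so the index set $\nu(f)$ is the same for all relevant functionals, which makes the quantifier manipulation clean.)

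The step I expect to carry the real content is the reduction from ``$\gamma_j(A) = 0$ for $j \in \nu(f)$'' to ``$\lambda(A) = 0$,'' since this is where the hypothesis of non-purely-imaginary eigenvalues is indispensable: if the weights were allowed to be purely imaginary, the factor multiplying $\lambda(A)$ could be $i\alpha_j$ without a real part, and the argument that $\gamma_j(A)=0 \iff \lambda(A)=0$ would fail. Everything else is bookkeeping with the single shared functional $\lambda$. The main subtlety to handle carefully is making sure that $\nu(f)$ is nonempty — that is, that some $\gamma_j$ is actually constrained by fixing $\lambda_0$ — which follows because $\lambda_0 \ne 0$ guarantees $f(Z_j) \ne 0$ for at least one $j$, giving a genuine constraint $\lambda(A) = 0$ rather than a vacuous one.
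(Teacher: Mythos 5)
Your argument reaches the right conclusion, but its central step rests on a misreading of the hypothesis. You treat the $\lambda$ in $\gamma_{j}(A_{k})=\lambda(A_{k})(1+i\alpha_{j})$ as a \emph{single} real functional shared by all the roots, and deduce that $\gamma_{j}(A)=0$ for one relevant $j$ forces $\lambda(A)=0$ and hence $\gamma_{j'}(A)=0$ for \emph{every} index $j'$. That is not what the hypothesis means (despite the notation): each root separately has the form $\gamma_{j}=\lambda_{j}(1+i\alpha_{j})$ with $\lambda_{j}=\operatorname{Re}\gamma_{j}$ a real functional depending on $j$; the assumption is only that no nonzero root is purely imaginary. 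If a single $\lambda$ governed all weights, any nontrivial stabilizer $K$ would annihilate every root and hence lie in $Z(G)\cap H$, contradicting several of the paper's own examples --- e.g.\ the Heisenberg group with the two-parameter dilation group, where the roots are $\gamma_{Z}=(1,0)$, $\gamma_{Y}=(1,-1)$, $\gamma_{X}=(0,1)$ in the basis $(A_{1},A_{2})$, the stabilizer $K=\exp(\mathbb{R}A_{2})$ of $\Lambda_{\nu}$ is nontrivial while $Z(G)$ is trivial, and $\gamma_{X}(A_{2})=1\neq 0$. So your intermediate claim that $A\in\mathfrak{k}$ implies $\gamma_{j}(A)=0$ for all $j$ is false in general, and it is precisely the step you single out as carrying ``the real content.''

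The good news is that the detour is unnecessary: the complete argument is the one you relegate to a parenthesis. Since the action is diagonal, $\exp A$ fixes $f$ iff $e^{-\gamma_{j}(A)}f(Z_{j})=f(Z_{j})$ for all $j$, i.e.\ iff $\gamma_{j}(A)=0$ for every $j\in\nu(f)$ (the non-purely-imaginary hypothesis is what rules out $\gamma_{j}(A)\in 2\pi i\mathbb{Z}\setminus\{0\}$, applied root by root --- not via a shared functional). Hence the stabilizer of $f$ is $\exp\bigl(\bigcap_{j\in\nu(f)}\ker\gamma_{j}\bigr)$, which depends only on the index set $\nu(f)$; and $\nu(f)=\nu=\{1,\dots,n\}\setminus\mathbf{e}^{\circ}$ is the same for every $f\in\Lambda_{\nu}$ by the very definition of $\Lambda_{\nu}$. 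That is exactly the paper's proof. Rewrite your argument so that this is the main line and drop the shared-$\lambda$ reduction.
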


\begin{proof}
Recall the definition of $\Lambda_{\nu}:$%
\[
\Lambda_{\nu}=\left\{  f\in\Lambda:f\left(  Z_{j}\right)  \neq0,\text{ }%
j\in\left\{  1,2,\cdots,n\right\}  \backslash\mathbf{e}^{\circ}\right\}  .
\]
Suppose there exists a linear functional $f\in\Lambda_{\nu}$ and $h\neq1,$ such
that $h\cdot f=f$. Since the action of $h$ is a diagonal action, then it must
be the case that $ad\log h\left(  Z_{j}\right)  =0$ for all $j\in\left\{
1,2,\cdots,n\right\}  \backslash\mathbf{e}^{\circ}.$ Thus for any $f\in
\Lambda_{\nu},$ we have that
\[
K=\left\{  h\in H:ad\log h\left(  Z_{j}\right)  =0\text{ for }j\in\left\{
1,2,\cdots,n\right\}  \backslash\mathbf{e}^{\circ}\right\}  .
\]
This completes the proof.
\end{proof}

\begin{lemma}\label{claim}
Let $\pi_{\lambda}$ be an irreducible representation of $N$ corresponding to a
linear functional $\lambda\in\Lambda_{\nu}$ via Kirillov's map, and let $K$
the stabilizer subgroup of the coadjoint action of $H$ on $\Lambda_{\nu}.$ We
define the extension of $\pi_{\lambda}$ as $\widetilde{\pi}_{\lambda},$ which
is an irreducible representation of $N\rtimes K$ acting in $\mathcal{H}%
_{\lambda}=L^{2}\left(  \mathfrak{n}/\mathfrak{e}\right)  \otimes
\mathrm{Hol}\left(  \mathfrak{e/d}\right)  $ such that if $\gamma_{\lambda
}\left(  \cdot\right)  $ is the restriction of $C\left(  \lambda,\cdot\right)
$ to $K.$ More precisely, the definition of such extension is given by
$\widetilde{\pi}_{\lambda}\left(  n,k\right)  \phi\left(  x\right)
=\pi_{\lambda}\left(  n\right)  \gamma_{\lambda}\left(  h\right)  \phi\left(
x\right)  .$ Furthermore, let $\left\{  \chi_{\sigma}:\sigma\in\mathfrak{k}%
^{\ast}\right\}  =\widehat{K},$ and recall that $\Sigma^{\circ}$ is the
cross-section for the coadjoint orbits of $H$ in $\Lambda_{\nu}.$ The
following set
\[
\left\{  \mathrm{Ind}_{NK}^{NH}\left(  \widetilde{\pi}_{\lambda}\otimes
\chi_{\sigma}\right)  :\left(  \lambda,\sigma\right)  \in\Sigma^{\circ}%
\times\mathfrak{k}^{\ast}\right\}
\]
exhausts almost all of the irreducible representations of $G$ which will
appear in the Plancherel transform of $G,$ and if $L$ denotes the left regular
representation of $G,$ we have
\[
L\simeq\int_{\Sigma^{\circ}\times\mathfrak{k}^{\ast}}^{\oplus}\mathrm{Ind}%
_{NK}^{NH}\left(  \widetilde{\pi}_{\lambda}\otimes\chi_{\sigma}\right)
\otimes1_{L^{2}\left(  H/K,\mathcal{H}_{\lambda}\right)  }d\mu\left(
\lambda,\sigma\right)
\]
and $d\mu\left(  \lambda,\sigma\right)  $ is absolutely continuous with
respect to the natural Lebesgue measure on $\Sigma^{\circ}\times
\mathfrak{k}^{\ast}.$
\end{lemma}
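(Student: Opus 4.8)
The plan is to assemble the decomposition of $L$ from the regular representation of the normal subgroup $N$ by induction in stages, and then to run the Mackey machine fibrewise over $\widehat{N}$; the type $\mathrm{I}$ nature of $G$ (it is exponential solvable) is what licenses every direct-integral manipulation below. I would begin with the identity $L\simeq\mathrm{Ind}_{\{e\}}^{G}1\simeq\mathrm{Ind}_{N}^{G}\left(\mathrm{Ind}_{\{e\}}^{N}1\right)=\mathrm{Ind}_{N}^{G}\left(L_{N}\right)$, where $L_{N}$ is the left regular representation of $N$. Since $N$ is connected, simply connected and nilpotent, it is unimodular and type $\mathrm{I}$, and the Plancherel theorem reads $L_{N}\simeq\int_{\Lambda_{\nu}}^{\oplus}\pi_{\lambda}\otimes1_{\mathcal{H}_{\lambda}}\,\abs{\mathbf{Pf}(\lambda)}\,d\lambda$, the Plancherel measure being carried by the Zariski-open dense cross-section $\Lambda_{\nu}$ with the explicit Pfaffian density $\abs{\mathbf{Pf}(\lambda)}$ recalled at the start of this section. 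Because induction commutes with direct integrals and with tensoring by a constant multiplicity space, this already gives $L\simeq\int_{\Lambda_{\nu}}^{\oplus}\mathrm{Ind}_{N}^{G}(\pi_{\lambda})\otimes1_{\mathcal{H}_{\lambda}}\,\abs{\mathbf{Pf}(\lambda)}\,d\lambda$.

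Next I would decompose each $\mathrm{Ind}_{N}^{G}(\pi_{\lambda})$ by the Mackey machine. With $G=NH$ and $NK$ the subgroup attached to the common stabilizer $K$, which is independent of $\lambda\in\Lambda_{\nu}$ by the preceding lemmas, induction in stages gives $\mathrm{Ind}_{N}^{G}(\pi_{\lambda})\simeq\mathrm{Ind}_{NK}^{NH}\left(\mathrm{Ind}_{N}^{NK}(\pi_{\lambda})\right)$. Since $K$ fixes $\pi_{\lambda}$, the extension $\widetilde{\pi}_{\lambda}$ built from $\gamma_{\lambda}$ together with the tensor-product theorem identifies $\mathrm{Ind}_{N}^{NK}(\pi_{\lambda})\simeq\widetilde{\pi}_{\lambda}\otimes L_{K}$, where $L_{K}$ is the regular representation of $K$ lifted through $NK\to K$. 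As $K$ is a closed connected subgroup of $H\cong\mathbb{R}^{r}$ with Lie algebra $\mathfrak{k}$, the Euclidean Fourier transform yields $L_{K}\simeq\int_{\mathfrak{k}^{\ast}}^{\oplus}\chi_{\sigma}\,d\sigma$, whence $\mathrm{Ind}_{N}^{G}(\pi_{\lambda})\simeq\int_{\mathfrak{k}^{\ast}}^{\oplus}\mathrm{Ind}_{NK}^{NH}\left(\widetilde{\pi}_{\lambda}\otimes\chi_{\sigma}\right)d\sigma$. Because $K$ is exactly the stabilizer and $\widehat{N}/H$ is countably separated, Mackey's irreducibility and inequivalence criteria show each $\mathrm{Ind}_{NK}^{NH}(\widetilde{\pi}_{\lambda}\otimes\chi_{\sigma})$ is irreducible and that distinct pairs $(\lambda,\sigma)$ give inequivalent representations; since $\Lambda_{\nu}$ carries full Plancherel measure in $\Lambda$ and $\Sigma^{\circ}\simeq\Lambda_{\nu}/H$ by Proposition \ref{proposition18}, the family indexed by $\Sigma^{\circ}\times\mathfrak{k}^{\ast}$ exhausts $\widehat{G}$ up to a $\mu_{G}$-null set.

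Finally I would reindex from $\Lambda_{\nu}$ to $\Sigma^{\circ}\times\mathfrak{k}^{\ast}$. The crucial point is that $\mathrm{Ind}_{N}^{G}(\pi_{\lambda})$ depends only on the $H$-orbit of $\lambda$, hence only on its image in $\Sigma^{\circ}$. I would disintegrate the absolutely continuous measure $\abs{\mathbf{Pf}(\lambda)}\,d\lambda$ along the orbit map $\Lambda_{\nu}\to\Sigma^{\circ}$, whose fibres are the orbits $H\cdot\lambda\simeq H/K$; since the integrand $\mathrm{Ind}_{N}^{G}(\pi_{\lambda})\otimes1_{\mathcal{H}_{\lambda}}$ is constant along each fibre, integrating over the fibre accumulates the multiplicity into $L^{2}(H/K,\mathcal{H}_{\lambda})$, giving $L\simeq\int_{\Sigma^{\circ}}^{\oplus}\mathrm{Ind}_{N}^{G}(\pi_{\lambda})\otimes1_{L^{2}(H/K,\mathcal{H}_{\lambda})}\,d\bar{\mu}(\lambda)$ for the pushed-forward measure $\bar{\mu}$. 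Substituting the Mackey decomposition of the inner factor and writing $d\mu(\lambda,\sigma)=d\bar{\mu}(\lambda)\,d\sigma$ produces the stated formula; the absolute continuity of $d\mu$ with respect to Lebesgue measure on $\Sigma^{\circ}\times\mathfrak{k}^{\ast}$ follows because $\bar{\mu}$ is a disintegration of the absolutely continuous measure $\abs{\mathbf{Pf}(\lambda)}\,d\lambda$ and $d\sigma$ is Lebesgue on $\mathfrak{k}^{\ast}$. The main obstacle is to make this disintegration rigorous: one must exhibit a measurable section of the orbit map matching $\Sigma^{\circ}$, verify that the conditional measures on the fibres $H/K$ agree up to a smooth density with the invariant measure, and check that the field $\lambda\mapsto L^{2}(H/K,\mathcal{H}_{\lambda})$ is measurable with the Plancherel multiplicity predicted by the Kleppner--Lipsman formula for group extensions. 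I expect the non-unimodularity of $G$ to intervene only through the Duflo--Moore normalization of $d\mu$, leaving the representation-theoretic shape of the decomposition intact.
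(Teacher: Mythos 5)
Your argument is correct and is essentially the paper's own route: the paper does not prove this lemma but simply defers to Theorem 10.2 of Kleppner--Lipsman on group extensions (together with Currey's explicit Plancherel parameters), and what you have written is precisely a reconstruction of that argument --- induction in stages from $L_{N}$, the tensor-product identity $\mathrm{Ind}_{N}^{NK}(\pi_{\lambda})\simeq\widetilde{\pi}_{\lambda}\otimes L_{K}$, Mackey irreducibility for the full stabilizer $K$, and disintegration of $\left\vert \mathbf{Pf}(\lambda)\right\vert d\lambda$ over $\Sigma^{\circ}\simeq\Lambda_{\nu}/H$ with fibres $H/K$ accumulating the multiplicity space $L^{2}(H/K,\mathcal{H}_{\lambda})$. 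The technical points you flag at the end (measurable section, equivalence of the fibre measures with the invariant one, measurability of the field of Hilbert spaces) are exactly the ones handled in the cited references, so nothing essential is missing.
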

The claims in Lemma \ref{claim} summarize some standard facts in the analysis of exponential Lie groups. We refer the reader to Theorem 10.2 in \cite{Lipsman 1} where the general case of group extensions is presented, and to \cite{Plancherel} which specializes to the class of groups considered in this paper.
\begin{lemma}
For any $\lambda\in\Lambda_{\nu},$ let $K=\mathrm{Stab}_{G}(\lambda),$ such
that $K\not =\{1\}$. There exists a non trivial representation of $K$ inside
the symplectic group $\mathrm{Sp}\left(  \mathfrak{n/n}(\lambda)\right)  $,
and $\mathfrak{n}(\lambda)$ is the null-space of the matrix $\left(\lambda
[Z_{i},Z_{j}]\right)_{1\leq i,j\leq n}.$
\end{lemma}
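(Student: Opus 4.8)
The plan is to build the required representation $\rho\colon K\to\mathrm{Sp}(\mathfrak{n}/\mathfrak{n}(\lambda))$ by descending the adjoint action to the symplectic quotient, and then to prove by a weight computation that this representation cannot be trivial once $K\neq\{1\}$. Throughout I regard $K$ as the subgroup of $H$ fixing $\lambda$, as in the preceding lemmas, so that each $h\in K$ acts on $\mathfrak{n}$ by the Lie algebra automorphism $\mathrm{Ad}(h)$.

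First I would set up the map. Since $h\in K$ fixes $\lambda$ under the coadjoint action and $K$ is a group, one has $\lambda\circ\mathrm{Ad}(h)=\lambda$; hence for the skew form $B_{\lambda}(X,Y)=\lambda[X,Y]$ we get $B_{\lambda}(\mathrm{Ad}(h)X,\mathrm{Ad}(h)Y)=\lambda(\mathrm{Ad}(h)[X,Y])=B_{\lambda}(X,Y)$, using that $\mathrm{Ad}(h)$ is an automorphism. Thus $\mathrm{Ad}(h)$ preserves $B_{\lambda}$, hence preserves its radical $\mathfrak{n}(\lambda)$ (the null-space of $(\lambda[Z_i,Z_j])$), and so descends to a linear map $\rho(h)$ on $\mathfrak{n}/\mathfrak{n}(\lambda)$ preserving the induced nondegenerate skew form. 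That is, $\rho(h)\in\mathrm{Sp}(\mathfrak{n}/\mathfrak{n}(\lambda))$, and $h\mapsto\rho(h)$ is a homomorphism.

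The heart of the matter is nontriviality, and this is the step I expect to be the main obstacle. Suppose $\rho(h)=\mathrm{id}$ for every $h\in K$ and fix $h\in K$; I would deduce $h=1$. Passing to the complexification $\mathfrak{m}=\mathfrak{n}_{\mathbb{C}}$, the adaptable basis diagonalizes the action: $\mathrm{Ad}(h)Z_j=e^{\gamma_j(\log h)}Z_j$, and $\mathfrak{m}(\lambda)=\mathfrak{n}(\lambda)_{\mathbb{C}}$ is the radical of $B_{\lambda}$ on $\mathfrak{m}$. For each jump index $j\in\mathbf{e}^{\circ}$ the matrix $M_{\mathbf{e}^{\circ}}(\lambda)=(\lambda[Z_i,Z_j])_{i,j\in\mathbf{e}^{\circ}}$ is nonsingular, since its Pfaffian is nonzero on $\Lambda_{\nu}\subset\Omega_{\mathbf{e}^{\circ}}$; consequently $Z_j\notin\mathfrak{m}(\lambda)$, so its image $w_j$ in $\mathfrak{m}/\mathfrak{m}(\lambda)$ is a nonzero eigenvector with $\rho(h)w_j=e^{\gamma_j(\log h)}w_j$. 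Triviality of $\rho(h)$ forces $e^{\gamma_j(\log h)}=1$, i.e. $\operatorname{Re}\gamma_j(\log h)=0$ for all $j\in\mathbf{e}^{\circ}$. Here the hypothesis enters decisively: by assumption $\gamma_j(\log h)$ is a real scalar multiple of $1+i\alpha_j$, whose real part equals $1\neq0$, so a vanishing real part upgrades to $\gamma_j(\log h)=0$ for every $j\in\mathbf{e}^{\circ}$. It is precisely this non-purely-imaginary shape of the roots that prevents a nontrivial element of $K$ from acting as a unimodular rotation on the quotient and thereby escaping detection.

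Finally I would combine this with the description of $K$ from the earlier lemma, namely $\gamma_j(\log h)=0$ for all $j\in\nu=\{1,\dots,n\}\setminus\mathbf{e}^{\circ}$. Together these give $\gamma_j(\log h)=0$ for every $1\le j\le n$, so $\mathrm{Ad}(h)$ is the identity on $\mathfrak{m}$ and hence on $\mathfrak{n}$. As $N$ is simply connected, an automorphism with trivial differential is trivial, and since $H\leq\mathrm{Aut}(N)$ acts faithfully we conclude $h=1$, contradicting $K\neq\{1\}$. Hence $\rho$ is a nontrivial representation of $K$ into $\mathrm{Sp}(\mathfrak{n}/\mathfrak{n}(\lambda))$, as claimed.
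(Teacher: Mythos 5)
Your construction of the homomorphism $\rho\colon K\to\mathrm{Sp}(\mathfrak{n}/\mathfrak{n}(\lambda))$ is exactly the paper's argument: $\mathrm{Ad}(h)$ preserves $B_{\lambda}$ for $h\in K$, hence preserves its radical $\mathfrak{n}(\lambda)$ and descends to a symplectic map on the quotient. The paper stops there and never actually argues nontriviality, so your attempt to supply that step is the genuinely new content. The weight computation is sound as far as it goes: diagonality of the action gives $\rho(h)w_j=e^{\gamma_j(\log h)}w_j$ on the nonzero images $w_j$ of the $Z_j$ with $j\in\mathbf{e}^{\circ}$, the non-purely-imaginary shape of the roots upgrades $\operatorname{Re}\gamma_j(\log h)=0$ to $\gamma_j(\log h)=0$, and combining with the description of $K$ shows that any $h\in\ker\rho$ acts trivially on all of $\mathfrak{n}$.

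The gap is your final sentence, where you conclude $h=1$ from faithfulness of the $H$-action on $N$. Although the hypotheses literally say $H\leq\mathrm{Aut}(N)$, the paper's working framework allows the action to have a kernel: an element of $H$ acting trivially on $N$ is precisely an element of $Z(G)\cap H$, and the paper devotes half of its main theorem, and Example \ref{ex} in particular (where $\mathbb{R}B=\mathfrak{z}(\mathfrak{g})$ and $K=\exp(\mathbb{R}B)\neq\{1\}$), to the case $\dim(Z(G)\cap H)>0$. What your argument actually establishes is $\ker\rho=K\cap Z(G)$ --- which is the content of the paper's very next lemma, asserting $\gamma_{\lambda}(K_1)=\{1\}$ if and only if $K_1\leq Z(G)$ --- so $\rho$ is nontrivial if and only if $K\not\subseteq Z(G)$. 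In Example \ref{ex} the representation $\beta$ is trivial even though $K\neq\{1\}$, so the word ``non trivial'' in the statement cannot be proved without the extra hypothesis $K\not\leq Z(G)$; your proof silently restores correctness by assuming faithfulness, which is exactly the case in which the claim never fails. You should either add that hypothesis or, as the paper implicitly does, drop the nontriviality claim and record the kernel computation separately.
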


\begin{proof}
It is well-known that $\mathfrak{n/n}(\lambda)$ has a smooth symplectic
structure since the bilinear form $B_{\lambda}\left(
X,Y\right)  =\lambda\left[  X,Y\right]  $ is a non degenerate, skew-symmetric
2-form on  $\mathfrak{n/n}(\lambda)$. Let $h\in K,$ since $h\cdot\lambda=\lambda,$ then the bilinear form
$B_{\lambda}\left(  X,Y\right)  $ is $K$-invariant. In other words, for any
$h\in K$, $B_{\lambda}\left(  h\cdot X,h\cdot Y\right)  =B_{\lambda}\left(
X,Y\right)  .$ Thus, there is a natural matrix representation $\beta$ of $K$
such that $\beta\left(  K\right)  $ is a closed subgroup of the symplectic
group $\mathrm{Sp}\left(  \mathfrak{n/n}(\lambda)\right)  .$ Identifying
$\mathfrak{n/n}(\lambda)$ with a supplementary basis of $\mathfrak{n}%
(\lambda)$ in $\mathfrak{n}$, which we denote $\mathcal{B}$, this
representation is nothing but the adjoint representation of $K$ acting on
$\mathcal{B}$.
\end{proof}

\vskip 0.5cm

In this paper, $Z(G)$ stands for the center of the Lie group $G$, and
$\mathfrak{z(g)}$ stands for its Lie algebra. Also, we remind the reader that $\gamma_{\lambda}(\cdot)$ is the restriction of the representation $C(\lambda,\cdot)$ to the group $K$.

\begin{lemma}
Assume that $K_{1}$ is a subgroup of $K$. $\gamma_{\lambda}\left(
K_{1}\right)  =\left\{  1\right\}  $ if and only $K_{1}\leq Z\left(  G\right)
.$
\end{lemma}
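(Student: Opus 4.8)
The plan is to reduce both sides of the equivalence to the same system of linear conditions on $A:=\log k$; here I use that $K$ is a closed connected subgroup of $H\cong\mathbb{R}^r$, so $\exp:\mathfrak{k}\to K$ is a diffeomorphism, and that each $Z_j$ is a genuine eigenvector, $[A,Z_j]=\gamma_j(A)Z_j$, by diagonalizability. First I would identify $Z(G)$ among the elements $(1,k)$ with $k\in K_1\le K\le H$. Since $H$ is abelian, $(1,k)$ commutes with every $(1,h)$ automatically, so $(1,k)\in Z(G)$ if and only if it commutes with every $(n,1)$; by the law $(1,k)(n,1)=(k\cdot n,k)$ this holds iff $k$ acts trivially on $N$, i.e. $\mathrm{Ad}(k)=\mathrm{id}_{\mathfrak{n}}$. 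As the $Z_j$ are eigenvectors and a weight $\gamma_j(A)=\mu_j(A)(1+i\alpha_j)$ vanishes iff $\mu_j(A)=0$, this is equivalent to $\gamma_j(A)=0$ for all $1\le j\le n$. This is the arithmetic form of ``$K_1\le Z(G)$'' I will match.

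Next I would translate $\gamma_\lambda(k)=1$. By definition $\gamma_\lambda(k)=C(k,\lambda)$ acts on $\mathcal{H}_\lambda$ by $f\mapsto|\delta_{\mathcal{X}}(k)|^{-1/2}\,f(k^{-1}\cdot\,)$, where $k$ acts on the domain $\mathcal{X}\cong(\mathfrak{n}/\mathfrak{e})\times(\mathfrak{e}/\mathfrak{d})\cong\mathfrak{n}/\mathfrak{d}$ linearly through $\mathrm{Ad}(k)$, which preserves the $H$-invariant positive polarization and hence every subquotient in sight. A positive scalar multiple of a composition operator by a linear automorphism is the identity operator exactly when the automorphism is trivial and the scalar is $1$, and triviality forces $\delta_{\mathcal{X}}(k)=1$; hence $\gamma_\lambda(k)=1$ iff $\mathrm{Ad}(k)$ acts trivially on $\mathfrak{n}/\mathfrak{d}$. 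Since $\mathrm{Ad}(k)$ is diagonal, this says $\gamma_a(A)=0$ for every jump index $a$ whose eigendirection survives in $\mathfrak{n}/\mathfrak{d}$ (the vanishing of $\mu_a(A)$ coming from $|e^{\gamma_a(A)}|=1$, which then forces $\gamma_a(A)=0$).

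Finally I would close the loop using the weights. If $\gamma_\lambda(k)=1$, then $\gamma_a(A)=0$ for each surviving $a$; and because $k\in K$ stabilizes $\lambda\in\Lambda_\nu$ with $\mathfrak{k}=\bigcap_{j\in\nu}\ker\gamma_j$, $\nu=\{1,\dots,n\}\setminus\mathbf{e}^{\circ}$, I also have $\gamma_m(A)=0$ for every non-jump index $m$. For each jump pair $(i_t,j_t)$ the defining bracket satisfies $\lambda[Z_{i_t},Z_{j_t}]\neq0$, so some eigenvector $Z_c$ with $\lambda(Z_c)\neq0$ — necessarily $c\in\nu$ — occurs in $[Z_{i_t},Z_{j_t}]$, and the derivation property of $\mathrm{ad}(A)$ gives $\gamma_c=\gamma_{i_t}+\gamma_{j_t}$. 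Moreover, isotropy of $\mathfrak{p}(\lambda)$ together with $\lambda[Z_{i_t},Z_{j_t}]\neq0$ prevents both members of the pair from lying in $\mathfrak{d}$, so at least one of $i_t,j_t$ survives in $\mathfrak{n}/\mathfrak{d}$; combining $\gamma_{i_t}(A)+\gamma_{j_t}(A)=\gamma_c(A)=0$ with the vanishing of that surviving weight forces $\gamma_{i_t}(A)=\gamma_{j_t}(A)=0$. Thus $\gamma_j(A)=0$ for all $j$, so $(1,k)\in Z(G)$ by the first step. The converse is immediate, since $K_1\le Z(G)$ makes each $k$ act trivially on $\mathfrak{n}$, a fortiori on $\mathcal{X}$, whence $\gamma_\lambda(k)$ is the identity. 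The step I expect to carry the real weight is this last passage from the Lagrangian half of the jump weights to the full set, via $\gamma_c=\gamma_{i_t}+\gamma_{j_t}$ and the isotropy of the polarization, together with checking that the composition-operator description of $\gamma_\lambda(k)$ remains valid in the non-real, holomorphically induced case.
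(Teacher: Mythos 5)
Your proof is correct and rests on the same mechanism as the paper's: triviality of $\gamma_{\lambda}(k)$ forces $\mathrm{Ad}(k)$ to act trivially on the ``configuration'' half $\mathcal{X}$ of the symplectic space $\mathfrak{n}/\mathfrak{n}(\lambda)$, and the $K$-invariance of $B_{\lambda}$ then propagates that triviality to the other half, so only the implementation differs. The paper passes through the symplectic representation $\beta(K)\leq\mathrm{Sp}(\mathfrak{n}/\mathfrak{n}(\lambda))$ and argues that a symplectic matrix with at least half of its eigenvalues equal to $1$ must be the identity because the multiplicity of the eigenvalue $1$ is even; as literally stated that parity argument is too weak (even multiplicity at least $d$ does not give multiplicity $2d$), and what is really being used is the reciprocal pairing of eigenvalues across a pointwise-fixed Lagrangian. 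Your root computation $\gamma_{i_t}(A)+\gamma_{j_t}(A)=\gamma_{c}(A)=0$, with $c\in\nu$ and $\mathfrak{k}=\bigcap_{j\in\nu}\ker\gamma_{j}$, combined with the isotropy of $\mathfrak{p}(\lambda)$ excluding both members of a jump pair from $\mathfrak{d}$, is exactly that pairing made explicit, so your version is if anything tighter than the paper's at this step; it also handles the non-jump directions cleanly via the definition of $K$, which the paper leaves implicit when it concludes from $\beta(k)=\mathrm{id}$ that $k$ is central. The one point both treatments take on faith is that $\gamma_{\lambda}(k)$ is a genuine weighted composition operator, with no extra multiplier, in the holomorphically induced case; you flag this explicitly, and the paper's displayed formula for $\gamma_{\lambda}(k)$ makes the same assumption, so there is no gap relative to the paper.
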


\begin{proof}
Clearly if there exists a non trivial subgroup such that $K_{1}\leq Z\left(
G\right)  $ then $\gamma_{\lambda}\left(  K_{1}\right)  =\left\{  1\right\}
.$ For the other way around, let $k\in K_{1}$. Notice that $$\gamma_{\lambda}\left(  k\right)  \phi\left(  x\right)  =\left\vert
\delta_{\mathcal{X}}\left(  h\right)  \right\vert ^{-1/2}\phi\left(
\beta\left(  k\right)  ^{-1}x\right).$$  We have already seen that
$\beta(k)$ is a symplectic matrix, and at least half of its eigenvalues are
$1$. Since for any symplectic matrix, the multiplicity of eigenvalues $1$ if
they occur is even, then it follows that $\beta(k)$ is the identity. Thus, $k$ is a central element.
\end{proof}

\begin{remark}
Let $\beta$ be the finite dimensional representation of $K$ in $\mathrm{Sp}%
(\mathfrak{n}/\mathfrak{n}_{\lambda})$. By the first isomorphism theorem,
$\beta(K)\simeq K/(Z(G)\cap H)$.
\end{remark}

\begin{lemma}
\label{orbit} If there exists some $x\in\mathcal{X}$ with $\phi_{x}%
:K\rightarrow\mathcal{X}$ and $\phi_{x}\left(  k\right)  =k\cdot x$ such that
$\mathrm{rank}\left(  \phi_{x}\right)  =\max_{y\in\mathcal{X}}\left(
\mathrm{rank}\left(  \phi_{y}\right)  \right)  $ then the number of elements
in the cross-section for the $K$ orbit in $\mathcal{X}$ is equal to
$2^{\dim\mathcal{X}}\text{ if }\mathrm{rank}\left(  \phi_{x}\right)
=\dim\mathcal{X},$ and is infinite otherwise.
\end{lemma}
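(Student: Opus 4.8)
The plan is to reduce the statement to the explicit linear action of $K$ on the vector space $\mathcal{X}$ and then count generic orbits directly. First I would record the shape of this action. Since $K\cong\mathbb{R}^{s}$ is connected and abelian and its action on $\mathfrak{n}$ is diagonalizable with weights $\gamma_{j}$ satisfying $\mathrm{Im}\,\gamma_{j}=\alpha_{j}\,\mathrm{Re}\,\gamma_{j}$, the induced linear action on $\mathcal{X}\cong\mathfrak{n}/\mathfrak{e}\times\mathfrak{e}/\mathfrak{d}$ splits, in suitable real coordinates, into one-dimensional real blocks (the indices with $\alpha_{j}=0$) on which $k=\exp A$ acts by the positive scalar $e^{\gamma_{j}(A)}$, and two-dimensional blocks $\mathbb{C}$ (the indices with $\alpha_{j}\neq 0$) on which $k$ acts by $w\mapsto e^{\gamma_{j}(A)}w$, i.e. scaling by $e^{\mathrm{Re}\,\gamma_{j}(A)}$ composed with rotation by $\alpha_{j}\,\mathrm{Re}\,\gamma_{j}(A)$. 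Thus $\mathcal{X}\cong\mathbb{R}^{p}\times\mathbb{C}^{q}$ with $\dim\mathcal{X}=p+2q$, and $\phi_{x}$ is the orbit map of this linear action.

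Next I would compute $\mathrm{rank}(\phi_{x})$ at a generic point $x$ with all coordinates nonzero, where the rank is that of the real-linear tangent map sending $A$ to the velocity of $t\mapsto\exp(tA)\cdot x$ at $t=0$. On a real block the velocity is $\mathrm{Re}\,\gamma_{j}(A)\,x_{j}$, contributing at most one real dimension; on a complex block the velocity is $\gamma_{j}(A)c_{j}=\mathrm{Re}\,\gamma_{j}(A)(1+i\alpha_{j})c_{j}$, which as $A$ ranges over $\mathfrak{k}$ sweeps out at most the real line $\mathbb{R}(1+i\alpha_{j})c_{j}$ and so again contributes at most one real dimension. Hence $\mathrm{rank}(\phi_{x})\le p+q$, and the equality $\mathrm{rank}(\phi_{x})=\dim\mathcal{X}=p+2q$ can hold only when $q=0$. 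This dichotomy is the key structural observation, and it is forced by the non-purely-imaginary hypothesis on the weights.

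In the case $\mathrm{rank}(\phi_{x})=\dim\mathcal{X}$ I would then use $q=0$, so $\mathcal{X}\cong\mathbb{R}^{p}$ and $K$ acts by positive diagonal scalings $x\mapsto(e^{\gamma_{1}(A)}x_{1},\dots,e^{\gamma_{p}(A)}x_{p})$. Full rank is exactly surjectivity of the linear map $\mathfrak{k}\to\mathbb{R}^{p}$, $A\mapsto(\gamma_{1}(A),\dots,\gamma_{p}(A))$, so on the conull open set where no coordinate vanishes each coordinate may be rescaled by an arbitrary positive factor independently. Since positive scalings preserve signs, the orbit of a generic $x$ is precisely its open sign-orthant, distinct orthants are distinct orbits, and a measurable cross-section for these generic orbits consists of one representative per orthant, namely of $2^{p}=2^{\dim\mathcal{X}}$ points, for instance the points $(\pm1,\dots,\pm1)$.

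Finally, in the case $\mathrm{rank}(\phi_{x})<\dim\mathcal{X}$ the generic orbits have dimension $d<m$ and fill no open set. I would deduce infiniteness by exhibiting a non-constant continuous $K$-invariant on the open dense maximal-rank locus: a complex block gives the invariant $\theta_{j}-\alpha_{j}\log\rho_{j}$ after writing $c_{j}=\rho_{j}e^{i\theta_{j}}$ (since $k$ increments $\log\rho_{j}$ by $\mathrm{Re}\,\gamma_{j}(A)$ and $\theta_{j}$ by $\alpha_{j}\,\mathrm{Re}\,\gamma_{j}(A)$), while a nontrivial relation $\sum_{j}c_{j}\,\mathrm{Re}\,\gamma_{j}=0$ on $\mathfrak{k}$ gives the invariant $\prod_{j}|x_{j}|^{c_{j}}$; equivalently the constant-rank theorem realizes the orbit space locally as a manifold of dimension $m-d\ge 1$. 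A non-constant invariant takes uncountably many values, so there are uncountably many orbits and any cross-section is infinite. The main obstacle I anticipate is the bookkeeping of the first two paragraphs, namely setting up the real/complex block decomposition of $\mathcal{X}$ compatibly with the holomorphically induced model and verifying that each complex block genuinely drops the rank, together with fixing the precise meaning of \emph{cross-section} as a set meeting each maximal-dimensional orbit exactly once, since only these generic orbits carry the relevant measure.
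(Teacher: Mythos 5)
Your proposal is correct and follows essentially the same route as the paper's own proof: full rank of the orbit map forces the $K$-action to have purely real weights, in which case the generic orbits are the open sign-orthants and a cross-section has $2^{\dim\mathcal{X}}$ points, while otherwise the orbits are lower-dimensional and any cross-section is infinite. The paper merely asserts the step ``open orbits are only possible if $K$ acts with real eigenvalues'' and deduces infinitude from meagreness of the orbits, whereas you justify both via the explicit real/complex block decomposition, the per-block rank bound, and the construction of non-constant continuous invariants -- a fleshing-out of the same argument rather than a different one.
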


\begin{proof}
Fix a cross-section $\mathcal{C}\simeq\mathcal{X}/K$, for $\mathcal{C}%
\subseteq\mathcal{X}$. For each $x\in\mathcal{C}$, let $r=\max_{x\in
\mathcal{C}}\left(  \mathrm{rank}\left(  \phi_{x}\right)  \right)  $ and
$\mathcal{X}_{1}=\left\{  x\in\mathcal{X}:\mathrm{rank}\left(  \phi
_{x}\right)  =r\right\}  .$ Then, $\mathcal{X}_{1}$ is open and dense in
$\mathcal{X}$. Assume that there exists some $y$ in $\mathcal{C}$ such that
$\mathrm{rank}\left(  \phi_{y}\right)  =\dim\left(  \mathcal{X}\right)  .$ If
$r=\dim\left(  \mathcal{X}\right)  ,$ then $\phi_{y}$ defines a submersion,
which means that $\phi_{y}$ is an open map. Furthermore, $\phi_{y}\left(
K\right)  $ which is the orbit of $y$ is open in $\mathcal{X}_{1}$. From the
definition of the action of $K$ this is only possible if and only if $K$ acts with real eigenvalues, and in that case, the number of orbits is
simply equal to $2^{\dim\mathcal{X}}.$ Now, assume that there exists no $y$ in
$\mathcal{C}$ such that $\mathrm{rank}\left(  \phi_{y}\right)  =\dim
\mathcal{X}$ then the orbits in $\mathcal{X}_{1}$ are always meagre in
$\mathcal{X}_{1}$. So a cross-section will contain an infinite amount of points.
\end{proof}

\begin{lemma}
\label{lemma27} Let $\gamma_{\lambda}\left(  \cdot\right)  $ be the
restriction of $C\left(  \lambda,\cdot\right)  $ to $K.$ We obtain the direct
integral decomposition
\[
\gamma_{\lambda}\simeq\int_{\left(  \mathfrak{k/h\cap z(g)}\right)  ^{\ast}%
}^{\oplus}\chi_{\overline{\sigma}}\otimes1_{\mathbb{C}^{\mathbf{m}}}%
d\overline{\sigma},
\]
where the multiplicity function is uniformly constant, and we have
$\mathbf{m}:\mathfrak{k}^{\ast}\to\mathbb{N}\cup\{\infty\}$ with
$\mathbf{m}(\sigma)$ being equal to the number of elements in the
cross-section $\mathcal{X}/K$.
\end{lemma}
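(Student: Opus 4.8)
The plan is to regard $\gamma_{\lambda}$ as a unitary representation of the abelian group $K$ on $\mathcal{H}_{\lambda}\cong L^{2}(\mathcal{X})$ whose action on the domain $\mathcal{X}$ is purely geometric: from the computation preceding the statement, $\gamma_{\lambda}(k)\phi(x)=\left\vert\delta_{\mathcal{X}}(k)\right\vert^{-1/2}\phi(\beta(k)^{-1}x)$, where $\beta$ is the symplectic representation of $K$ and the scalar is exactly the factor rendering the action unitary. Two facts recorded earlier organize everything. First, $\ker\beta=Z(G)\cap H$, equivalently $\beta(K)\simeq K/(Z(G)\cap H)$. Second, $\gamma_{\lambda}(k)=I$ precisely when $k\in Z(G)\cap H$. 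Hence $\gamma_{\lambda}$ factors through $K/(Z(G)\cap H)$, an abelian Lie group with Lie algebra $\mathfrak{k}/(\mathfrak{h}\cap\mathfrak{z}(\mathfrak{g}))$ and unitary dual $(\mathfrak{k}/(\mathfrak{h}\cap\mathfrak{z}(\mathfrak{g})))^{\ast}$; this already pins down which characters can occur.

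First I would disintegrate $L^{2}(\mathcal{X})$ over the orbit space $\mathcal{X}/K$. Applying the smooth, rational cross-section machinery of Section 3 now to the $K$-action on $\mathcal{X}$, I fix a measurable cross-section $\mathcal{C}\simeq\mathcal{X}/K$ so that on the principal stratum $\mathcal{X}_{1}$ of Lemma \ref{orbit} the map $\mathcal{C}\times(K/K_{c})\to\mathcal{X}_{1}$ is a measurable isomorphism, where $K_{c}$ denotes the stabilizer of a generic point. This yields $L^{2}(\mathcal{X})\simeq\int_{\mathcal{C}}^{\oplus}L^{2}(K\cdot c)\,dc$, each fiber carrying the restriction of $\gamma_{\lambda}$ to a single orbit $K\cdot c\simeq K/K_{c}$. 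The next step is to identify this orbit representation with the quasiregular representation $\mathrm{Ind}_{K_{c}}^{K}1$: since $K$ acts transitively on $K\cdot c$ and $\left\vert\delta_{\mathcal{X}}(k)\right\vert^{-1/2}$ is the square root of the Radon--Nikodym derivative of the $K$-quasi-invariant measure, after linearizing the dilation directions (taking logarithms in the coordinates where $\operatorname{Re}\gamma_{j}\neq0$) the orbit representation is unitarily equivalent to the left regular representation of $K/K_{c}$.

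I would then verify that on a generic orbit $K_{c}=Z(G)\cap H$: the identification of $\ker\gamma_{\lambda}$ with $Z(G)\cap H$, together with the fact that nonzero real parts of the weights force the $\beta$-action to be free modulo $\ker\beta$ on $\mathcal{X}_{1}$, gives $K_{c}=Z(G)\cap H$ almost everywhere. Consequently each orbit representation is $\mathrm{Ind}_{Z(G)\cap H}^{K}1$, the regular representation of the abelian group $K/(Z(G)\cap H)$, whose Plancherel decomposition is $\int_{(\mathfrak{k}/(\mathfrak{h}\cap\mathfrak{z}(\mathfrak{g})))^{\ast}}^{\oplus}\chi_{\overline{\sigma}}\,d\overline{\sigma}$ with each character appearing exactly once. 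Integrating over the cross-section and interchanging the two direct integrals yields
\[
\gamma_{\lambda}\simeq\int_{(\mathfrak{k}/(\mathfrak{h}\cap\mathfrak{z}(\mathfrak{g})))^{\ast}}^{\oplus}\chi_{\overline{\sigma}}\otimes 1_{L^{2}(\mathcal{C})}\,d\overline{\sigma}.
\]
Because $L^{2}(\mathcal{C})$ is independent of $\overline{\sigma}$, the multiplicity is uniformly constant and equal to $\dim L^{2}(\mathcal{C})$, i.e. the number of elements of the cross-section $\mathcal{X}/K$, which is $2^{\dim\mathcal{X}}$ or $\infty$ by Lemma \ref{orbit}; this is the asserted $\mathbf{m}$.

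The hard part will be the middle step: making the orbital disintegration rigorous and checking that each single-orbit piece is genuinely the regular representation of $K/K_{c}$. The difficulty is that $\beta$ mixes true dilations (from $\operatorname{Re}\gamma_{j}\neq0$) with rotations (from $\operatorname{Im}\gamma_{j}$), so orbits may be logarithmic spirals rather than rays, and one must confirm both that $\mathcal{X}/K$ is a standard Borel space admitting a section and that $\left\vert\delta_{\mathcal{X}}(k)\right\vert^{-1/2}$ is exactly the factor converting the transitive geometric action into $\mathrm{Ind}_{K_{c}}^{K}1$. Once the normalization and the generic stabilizer are settled, the uniform multiplicity and the integration domain follow immediately from abelian Plancherel theory.
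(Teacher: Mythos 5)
Your proposal is correct and follows essentially the same route as the paper: the paper likewise uses the diffeomorphism $\mathcal{X}_{1}\simeq\mathcal{X}_{1}/K\times K/(H\cap Z(G))$ to fiber $\mathcal{H}_{\lambda}$ over the orbit space, observes that $\gamma_{\lambda}$ acts by left translation on each fiber $K/(H\cap Z(G))$ (using the earlier lemma that $\ker\gamma_{\lambda}=Z(G)\cap H$), and then applies the abelian Fourier transform of $K/(H\cap Z(G))$ to obtain the stated direct integral with constant multiplicity equal to the cardinality of $\mathcal{X}/K$. Your extra care about the generic stabilizer and the Radon--Nikodym normalization only makes explicit what the paper asserts directly.
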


\begin{proof}
Recall that $\gamma_{\lambda}\left(  h\right)  f\left(  x\right)  =\left\vert
\delta_{\mathcal{X}}\left(  h\right)  \right\vert ^{-1/2}f\left(  h^{-1}\cdot
x\right)  $ and let $\mathbf{m}$ be the number of elements in the
cross-section for the $K$-orbits in $\mathcal{X}$. If $K=\left\{  1\right\}  $
then clearly, each point in $\mathcal{X}$ is its own orbit and
$\mathbf{m=\infty.}$ If $K$ acts on some invariant open subset of
$\mathcal{X}$ by spirals, then the cross-section will contain an infinite
number of elements. Let $\mathcal{X}_{1}$ as defined in Lemma \ref{orbit}. We
have the following natural diffeomorphism $\alpha:\mathcal{X}_{1}/K\times
K/\left(  H\cap Z\left(  G\right)  \right)  \rightarrow\mathcal{X}_{1}$ such
that $\alpha\left(  x,\overline{k}\right)  =\overline{k}\cdot x.$ Thus,
$\mathcal{X}_{1}$ becomes a total space with base space $\mathcal{X}_{1}/K,$
and fibers $K/\left(  H\cap Z\left(  G\right)  \right)  \cdot x$ such that
\[
\mathcal{X}_{1}=\displaystyle\bigcup\limits_{x\in\mathcal{X}_{1}/K} \left(
K/\left(  H\cap Z\left(  G\right)  \right)  \cdot x\right)  .
\]
First,\ for each $x$ in the cross-section $\mathcal{X}_{1}/K,$ identify
$K/\left(  H\cap Z\left(  G\right)  \right)  \cdot x$ with $K/\left(  H\cap
Z\left(  G\right)  \right)  ,$ and the Hilbert space $$\mathcal{H}_{\lambda}
\simeq\ \left(  L^{2}\left(  K/\left(  H\cap Z\left(  G\right)  \right)
\right)  \right)  ^{\mathbf{m}}\simeq L^{2}\left(  K/\left(  H\cap Z\left(
G\right)  \right)  \right)  \otimes\mathbb{C}^{\mathbf{m}}.$$ In fact for each
linear functional $\lambda$, the representation $\gamma_{\lambda}$ can be
modelled as being quasi-equivalent to the left regular representation on
$K/\left(  H\cap Z\left(  G\right)  \right)  .$ Let $\phi$ be a function in
$\mathcal{H}_{\lambda}$ and for each $x\in\mathcal{X}_{1}/K,$ we define
$\phi_{x}$ as the restriction of the function $\phi$ to the orbit of $x.$ It
is easy to see that the action of $\gamma_{\lambda}\left(  \cdot\right)  $
becomes just a left translation acting on $\phi_{x}$ for each $x\in
\mathcal{X}_{1}/K.$ $K/\left(  H\cap Z\left(  G\right)  \right)  $ being a
commutative Lie group, we can decompose its left regular representation by
using its group Fourier transform. Letting $\left(  \mathfrak{k/h\cap
z(g)}\right)  ^{\ast},$ the unitary dual of the group $K/\left(  H\cap
Z\left(  G\right)  \right)  ,$ we obtain a decomposition of the representation
$\gamma_{\lambda}$ into its irreducible components as follows.
\[
\gamma_{\lambda}\simeq\int_{\left(  \mathfrak{k/h\cap z(g)}\right)  ^{\ast}%
}^{\oplus}\chi_{\overline{\sigma}}\otimes1_{\mathbb{C}^{\mathbf{m}}}%
d\overline{\sigma},
\]
where $\chi_{\overline{\sigma}}$ are characters defined on $Z(G)\cap H$ and
$\int_{\left(  \mathfrak{k/h\cap z(g)}\right)  ^{\ast}}^{\oplus}%
\chi_{\overline{\sigma}}\otimes1_{\mathbb{C}^{\mathbf{m}}}d\overline{\sigma}$
is modelled as acting in the Hilbert space $\int_{\left(  \mathfrak{k/h\cap
z(g)}\right)  ^{\ast}}^{\oplus}\mathbb{C} \otimes1_{\mathbb{C}^{\mathbf{m}}%
}d\sigma.$ This completes the proof.
\end{proof}

\begin{lemma}
\label{plancherel decomp} Let $\Lambda_{\nu}\rightarrow\Sigma^{\circ}%
\simeq\Lambda_{\nu}/H$ be the quotient map induced by the action of $H$. The
push-forward of the Lebesgue measure on $\Lambda_{\nu}$ via the quotient map
is a measure equivalent to the Lebesgue measure on $\Sigma^{\circ}%
\simeq\Lambda_{\nu}/H.$
\end{lemma}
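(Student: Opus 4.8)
The plan is to show that the push-forward measure is equivalent to Lebesgue measure on $\Sigma^{\circ}$ by exploiting the explicit structure of the $H$-action established in the preceding propositions. Recall that $\Sigma^{\circ}$ is a cross-section for the $H$-orbits in $\Lambda_{\nu}$ (Proposition \ref{proposition18}), and that on $\Lambda_{\nu}$ the stabilizer is constant, equal to $K$. The first step is to decompose $H$ as $H=K\times H'$, where $H'$ is a complementary subgroup acting freely on $\Lambda_{\nu}$, so that the orbit map identifies $\Lambda_{\nu}$ diffeomorphically with $\Sigma^{\circ}\times (H/K)$. Concretely, using the adaptable basis normalization (the elements $A_r,\dots,A_1$ with $\operatorname{Re}(\gamma_t(A_t))=1$, $\gamma_t(A_{t'})=0$), the coordinates on $\Lambda_{\nu}$ split into the cross-section coordinates (the entries pinned down by $|l(Z_j)|=1$ for $j\in\{i_{s_1},\dots,i_{s_r}\}$) and the radial/angular coordinates along the orbit directions.

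Next I would compute the Jacobian of the orbit map explicitly. Since the action is diagonal with weights $\gamma_j(A)=\lambda(A)(1+i\alpha_j)$, the flow of $\exp(tA_t)$ scales the coordinate $l(Z_{i_{s_t}})$ by $e^{t}$ (up to a rotation from the imaginary part $\alpha_{i_{s_t}}$) and fixes the others by the normalization. Thus in suitable coordinates the map $(\sigma,h)\mapsto h\cdot\sigma$ from $\Sigma^{\circ}\times(H/K)\to\Lambda_{\nu}$ is, componentwise, of the form $(\text{unit vector},\, t)\mapsto e^{t}\cdot(\text{rotation})\cdot(\text{unit vector})$, which is a smooth change of variables whose Jacobian is a nowhere-vanishing smooth (indeed real-analytic) function. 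The key point is that this Jacobian is strictly positive and finite on the dense open set $\Lambda_{\nu}$, so by the change-of-variables formula the push-forward of Lebesgue measure on $\Lambda_{\nu}$ under the quotient map $\Lambda_{\nu}\to\Sigma^{\circ}$ differs from Lebesgue measure on $\Sigma^{\circ}$ only by integrating out the $H/K$ fibers against this positive density. Equivalently, one writes $d\lambda = J(\sigma,h)\, d\sigma\, dh$ on $\Lambda_\nu$ and observes that the fiber integral $\int_{H/K} J(\sigma,h)\,dh$ is a positive finite function of $\sigma$.

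The main obstacle I anticipate is the presence of the imaginary parts $\alpha_j$ of the weights, which make the orbit a logarithmic spiral rather than a straight ray; I must confirm that the spiral still meets $\Sigma^{\circ}$ transversally at a single point (guaranteed by Proposition \ref{proposition18}) and that the radial scaling by $e^{t}$ dominates, so that the Jacobian never degenerates and the fiber integral converges. Concretely, because $\operatorname{Re}(\gamma_t(A_t))=1$ the modulus grows like $e^{t}$ and the angular spiraling contributes only a bounded-modulus rotation factor to the Jacobian, so integrability and positivity are unaffected. Once the Jacobian is shown to be a positive smooth function bounded away from $0$ and $\infty$ on compact subsets, mutual absolute continuity of the two measures follows immediately, and the proof concludes by invoking the change-of-variables formula together with the disintegration of Lebesgue measure along the fibration $\Lambda_{\nu}\simeq\Sigma^{\circ}\times(H/K)$.
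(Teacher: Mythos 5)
Your overall strategy---identify $\Lambda_{\nu}$ with $\Sigma^{\circ}\times(H/K)$ via the orbit map, observe that the Jacobian of this identification is a smooth, nowhere-vanishing density, and conclude mutual absolute continuity---is sound, and it is essentially an explicit unwinding of the paper's own two-line argument, which simply notes that the quotient map is a submersion and that the push-forward of Lebesgue measure under a submersion is equivalent to Lebesgue measure on the image. The spiral-versus-ray concern you raise is handled exactly as you say: Proposition \ref{proposition18} already guarantees that each orbit meets $\Sigma^{\circ}$ exactly once, and only the real parts of the weights enter the modulus, so the identification is a genuine diffeomorphism.

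One step as you state it would fail, however: the claim that the fiber integral $\int_{H/K}J(\sigma,h)\,dh$ is a \emph{finite} positive function of $\sigma$. The fibers $H/K\cong\mathbb{R}^{r}$ are noncompact and the Jacobian grows exponentially along the orbit directions; already in Example \ref{ex} one has $\Lambda_{\nu}=\mathbb{R}\setminus\{0\}$, $\Sigma^{\circ}=\{\pm1\}$, and the fiber integral is of the form $\int_{\mathbb{R}}e^{t}\,dt=\infty$. Consequently the push-forward is not a finite density times Lebesgue measure; it is typically not even $\sigma$-finite. This does not sink the lemma, because equivalence of measures concerns only null sets: writing $d\lambda=J(\sigma,h)\,d\sigma\,dh$ with $J>0$ and applying Fubini, a Borel set $E\subseteq\Sigma^{\circ}$ is Lebesgue-null if and only if its preimage $E\times(H/K)$ is Lebesgue-null in $\Lambda_{\nu}$, which is precisely the mutual absolute continuity of the push-forward (understood as a measure class, i.e.\ a pseudo-image) with Lebesgue measure on $\Sigma^{\circ}$. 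You should replace the finiteness claim and the appeal to a fiberwise density with this null-set argument.
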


\begin{proof}
This Lemma follows from the following facts. The quotient map is a submersion
everywhere, and the push-forward of a Lebesgue measure via a submersion is
equivalent to a Lebesgue measure on the image set.
\end{proof}

Now, we will compute an explicit decomposition of the Plancherel measure on
$\Lambda_{\nu}$ under the action of the dilation group $H$. We first recall
the more general theorem for disintegration of Borel measures.

\begin{lemma}
Let $G$ be a locally compact group. Let $X$ be a left Borel $G$-space and
$\mu$ a quasi-invariant $\sigma$-finite positive Borel measure on $X.$ Assume
that there is a $\mu$-null set $X_{0\text{ }}$such that $X_{0\text{ }}$ is
$G$-invariant and $X-X_{0}$ is standard. Then for all $x\in X-X_{0},$ the
orbit $G\cdot x$ is Borel isomorphic to $G/G_{x}$ under the natural mapping,
and there is a quasi-invariant measure $\mu_{x}$ concentrated on the orbit
$G\cdot x$ such that for all $f\in L^{1}\left(  X,\mu\right)  ,$%
\[
\int_{X}f\left(  x\right)  d\mu\left(  x\right)  =\int_{\left(  X-X_{0}%
\right)  /G}\int_{G/G_{x}}f\left(  g\cdot x\right)  d\mu_{x}\left(
gG_{x}\right)  d\overline{\mu}\left(  x\right)  ,
\]
where $G_{x}$ is the stability group at $x.$
\end{lemma}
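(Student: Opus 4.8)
The plan is to treat the statement as a special case of the disintegration of a quasi-invariant measure along the orbit map, using that the action on $X-X_{0}$ is smooth. First I would discard the null set: since $X_{0}$ is $G$-invariant and $\mu(X_{0})=0$, the integral of any $L^{1}$ function is unchanged upon restricting to $X-X_{0}$, so it suffices to produce the disintegration on the standard Borel $G$-space $Y:=X-X_{0}$. Because $Y$ is standard, the orbit equivalence relation is smooth, and by the Effros--Mackey theory of smooth Borel actions the quotient $Y/G$ is countably separated (hence standard) and, for each $x\in Y$, the natural map $gG_{x}\mapsto g\cdot x$ is a Borel isomorphism of $G/G_{x}$ onto the orbit $G\cdot x$. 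This already yields the first assertion of the lemma.

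Next I would construct the orbital measures. Every homogeneous space $G/G_{x}$ of the locally compact group $G$ carries a quasi-invariant $\sigma$-finite measure, obtained from a Bruhat (rho-)function; transporting it through the Borel isomorphism above produces a quasi-invariant measure $\mu_{x}$ concentrated on $G\cdot x$. Any two quasi-invariant measures on $G/G_{x}$ are mutually equivalent, so $\mu_{x}$ is determined up to equivalence, and its Radon--Nikodym cocycle $\frac{d(g\cdot\mu_{x})}{d\mu_{x}}$ is the usual $\rho$-cocycle of the homogeneous space. Then I would invoke the abstract disintegration theorem for the Borel surjection $p:Y\rightarrow Y/G$, $p(x)=G\cdot x$, between standard Borel spaces: writing $\overline{\mu}=p_{*}\mu$ for the image measure, there exist measures $\nu_{y}$ concentrated on the fibres $p^{-1}(y)=G\cdot y$ with
\[
\int_{Y}f\,d\mu=\int_{Y/G}\Big(\int_{G\cdot y}f\,d\nu_{y}\Big)\,d\overline{\mu}(y),\qquad f\in L^{1}(Y,\mu),
\]
and this family $\{\nu_{y}\}$ is essentially unique up to $\overline{\mu}$-a.e. normalization. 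It remains to identify $\nu_{y}$ with a scalar multiple of $\mu_{y}$.

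The main obstacle is precisely this last matching step: the abstract conditional measures $\nu_{y}$ are a priori only concentrated on the orbits, with no built-in quasi-invariance. Here I would use the $G$-quasi-invariance of $\mu$. For $g\in G$ the pushforward $g\cdot\mu$ is equivalent to $\mu$ with a measurable Radon--Nikodym cocycle, and $p$ is $G$-invariant ($p\circ g=p$), so translating the displayed formula by $g$ and appealing to the essential uniqueness of the disintegration forces each $\nu_{y}$ to transform under $G$ by the same cocycle; that is, $\nu_{y}$ must be quasi-invariant on the orbit. Since quasi-invariant measures on $G\cdot y\cong G/G_{y}$ are unique up to equivalence, $\nu_{y}$ is proportional to $\mu_{y}$ for $\overline{\mu}$-a.e. $y$. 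Absorbing the proportionality factor into the normalization of $\overline{\mu}$ and rewriting the inner integral through the parametrization $gG_{y}\mapsto g\cdot y$ converts $\int_{G\cdot y}f\,d\nu_{y}$ into $\int_{G/G_{y}}f(g\cdot y)\,d\mu_{y}(gG_{y})$, which is the claimed formula. The cocycle bookkeeping in this uniqueness argument is the only genuinely delicate point; everything else is an assembly of standard facts about smooth actions and quasi-invariant measures on homogeneous spaces.
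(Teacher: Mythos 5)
The paper does not actually prove this lemma: it is quoted as a known disintegration theorem and the reader is referred to Kleppner--Lipsman for the proof. So there is no in-paper argument to compare yours against step by step; what you have written is essentially the standard Mackey-style proof one finds in that reference --- discard the invariant null set, identify each orbit with a homogeneous space, equip $G/G_x$ with its (essentially unique) quasi-invariant measure class, disintegrate $\mu$ over the quotient map, and use essential uniqueness of the disintegration together with quasi-invariance of $\mu$ to force the conditional measures to be quasi-invariant on the fibres. In outline this is the right route.

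There is, however, one step that is genuinely wrong as stated: ``Because $Y$ is standard, the orbit equivalence relation is smooth, and \dots\ the quotient $Y/G$ is countably separated.'' Standardness of $Y$ as a Borel $G$-space does \emph{not} imply smoothness of the orbit equivalence relation; an irrational rotation on the circle, or any properly ergodic action on a standard space, is a counterexample. The hypothesis that is actually needed (and the one imposed in the cited source) is that the quotient $\left(X-X_{0}\right)/G$ itself be countably separated, equivalently standard; only under that hypothesis does the Effros--Mackey theory give the Borel isomorphism $G/G_{x}\simeq G\cdot x$, and only then does the abstract disintegration of $\mu$ over $p:Y\rightarrow Y/G$ that you invoke exist at all (one cannot disintegrate Lebesgue measure on the circle into $\sigma$-finite orbital measures over the non-countably-separated orbit space of an irrational rotation). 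The lemma as transcribed in the paper is imprecise on this point, and your reading inherits the imprecision rather than repairing it. Granting the corrected hypothesis, the remaining delicate step --- which you name but do not carry out --- is upgrading ``for each fixed $g$, $g\cdot\nu_{y}\sim\nu_{y}$ for $\overline{\mu}$-almost every $y$'' to ``for $\overline{\mu}$-almost every $y$, $\nu_{y}$ is quasi-invariant under all of $G$''; this requires a Fubini argument over Haar measure on $G$ combined with the group law, and it is where the substantive work in the cited proof lies.
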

We refer the interested reader to \cite{Lipsman 1} for a proof of the above lemma.
\begin{proposition}
\label{planchereldecomp} (Disintegration of the Plancherel measure) Under the
action of $H$ the Plancherel measure on $\Lambda_{\nu}$ is decomposed into a
measure on the cross-section $\Sigma^{\circ}$ and a family of measures on each
orbit such that for any non negative measurable function $F\in L^{1}%
(\Lambda_{\nu})$, we have
\[
\int_{\Lambda_{\nu}}F\left(  f\right)  \left\vert \mathbf{Pf}\left(  f\right)
\right\vert df=\int_{\Sigma^{\circ}}\int_{H/K}F\left(  \overline{h}\cdot
\sigma\right)  d\omega_{\sigma}\left(  \overline{h}\right)  \left\vert
\mathbf{Pf}\left(  \sigma\right)  \right\vert d\sigma
\]
where for each $\sigma\in\Sigma^{\circ},$ $d\omega_{\sigma}\left(
\overline{h}\right)  =\Delta\left(  \overline{h}\right)  d\overline{h},$
$d\overline{h}$ is the natural Haar measure on $H/K,$ and $d\sigma$ is a
Lebesgue measure on $\Sigma^{\circ}$ with $\overline{h}=hK$, and $\Delta$ is
the modular function defined on the group $H/K.$
\end{proposition}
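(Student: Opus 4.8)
The plan is to obtain the stated formula as the explicit disintegration of the Plancherel measure $d\mu(\lambda)=|\mathbf{Pf}(\lambda)|\,d\lambda$ under the $H$-action, combining the abstract disintegration lemma stated just above with a transformation computation that identifies the orbital cocycle with the modular function. First I would invoke that lemma with $X=\Lambda_{\nu}$, $G=H$, and $\mu$ the Plancherel measure. Since the stabilizer $H_{f}=K$ is constant on $\Lambda_{\nu}$ and $\Sigma^{\circ}\simeq\Lambda_{\nu}/H$ by Proposition~\ref{proposition18}, the lemma produces quasi-invariant orbital measures $\mu_{\sigma}$ on each orbit $H\cdot\sigma\simeq H/K$ together with a base measure $\overline{\mu}$ on $\Sigma^{\circ}$ satisfying $\int_{\Lambda_{\nu}}F\,d\mu=\int_{\Sigma^{\circ}}\int_{H/K}F(\overline{h}\cdot\sigma)\,d\mu_{\sigma}(\overline{h})\,d\overline{\mu}(\sigma)$. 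It then remains only to identify $\mu_{\sigma}=\Delta(\overline{h})\,d\overline{h}$ and $\overline{\mu}=|\mathbf{Pf}(\sigma)|\,d\sigma$.

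The heart of the argument is to show that $\mu$ is relatively invariant under $H$ with modulus exactly $\Delta$. I would compute the two transformation factors separately. For the Pfaffian, since each $Z_{i}$ is an eigenvector with $\operatorname{ad}(\log h)Z_{i}=\gamma_{i}(\log h)Z_{i}\bmod\mathfrak{c}_{i-1}$, the coadjoint action gives $M_{\mathbf{e}^{\circ}}(h\cdot f)=D\,M_{\mathbf{e}^{\circ}}(f)\,D$ with $D=\mathrm{diag}(e^{-\gamma_{i}(\log h)})_{i\in\mathbf{e}^{\circ}}$; from $\mathbf{Pf}(DMD)=\det(D)\,\mathbf{Pf}(M)$ one obtains $|\mathbf{Pf}(h\cdot f)|=e^{-\sum_{i\in\mathbf{e}^{\circ}}\operatorname{Re}\gamma_{i}(\log h)}\,|\mathbf{Pf}(f)|$. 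For the Lebesgue factor, note that $\Lambda_{\nu}$ is coordinatized by the functions $f\mapsto f(Z_{j})$ for $j\in\nu$, and that in the adaptable ordering the coadjoint action is triangular with diagonal entries $e^{-\gamma_{j}(\log h)}$; hence $d\lambda$ scales by $e^{-\sum_{j\in\nu}\operatorname{Re}\gamma_{j}(\log h)}$. Because $\{1,\dots,n\}=\mathbf{e}^{\circ}\cup\nu$ is a disjoint union, and because complex weights occur in conjugate pairs so that only real parts survive upon taking absolute values, the two factors combine to $e^{-\sum_{j=1}^{n}\operatorname{Re}\gamma_{j}(\log h)}=\det\bigl(\mathrm{Ad}(h)|_{\mathfrak{n}}\bigr)^{-1}=\Delta(h)$; that is, $d\mu(h\cdot f)=\Delta(h)\,d\mu(f)$.

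Finally I would conclude from this relative invariance. Since $\Delta|_{K}=1$ (for $k\in K$ one has $\gamma_{j}(\log k)=0$ on $\nu$, and the real parts of the weights all equal the single functional $\lambda$, so $\det(\mathrm{Ad}(k)|_{\mathfrak{n}})=1$), the function $\Delta$ descends to $H/K$, and $\Delta(\overline{h})\,d\overline{h}$ is the relatively invariant measure on $H/K$ carrying precisely the cocycle forced by the modulus of $\mu$. By the essential uniqueness of quasi-invariant measures on the homogeneous space $H/K$, each $\mu_{\sigma}$ must be a positive multiple $c(\sigma)\,\Delta(\overline{h})\,d\overline{h}$; absorbing $c(\sigma)$ into the base measure reduces the problem to identifying $\overline{\mu}$. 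Here I would apply Lemma~\ref{plancherel decomp}: the push-forward of Lebesgue measure on $\Lambda_{\nu}$ under $\Lambda_{\nu}\to\Sigma^{\circ}$ is Lebesgue on $\Sigma^{\circ}$, and since the Pfaffian function restricts on the cross-section to $|\mathbf{Pf}(\sigma)|$, this forces $\overline{\mu}=|\mathbf{Pf}(\sigma)|\,d\sigma$, completing the identification. The main obstacle is the bookkeeping in the second paragraph: verifying that the Pfaffian scaling over the jump indices $\mathbf{e}^{\circ}$ and the Jacobian of the coadjoint action over the complementary indices $\nu$ assemble into exactly $\det(\mathrm{Ad}(h)|_{\mathfrak{n}})^{-1}$, which hinges on the eigenvector structure of the adaptable basis, the triangularity of the action, and the partition $\{1,\dots,n\}=\mathbf{e}^{\circ}\cup\nu$.
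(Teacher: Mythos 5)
Your proposal is correct and follows the route the paper itself indicates: the paper omits the proof as an ``elementary computation involving changing variables,'' and your argument --- invoking the preceding abstract disintegration lemma and then identifying the fiber and base measures through the relative invariance of $|\mathbf{Pf}(\lambda)|\,d\lambda$ under $H$ with modulus $\Delta$ --- is precisely that computation carried out. The only caveat is that your justification of $\Delta|_{K}=1$ leans on the paper's restrictive single-$\lambda$ weight hypothesis; the fact also follows more robustly from the later observation that $K$ acts symplectically on $\mathfrak{n}/\mathfrak{n}(\lambda)$.
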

The proof is obtained via some elementary computations involving changing variables. It is quite trivial. Thus, we shall omit it.
\begin{theorem}
\label{decomposition of quasi regular representation} The quasiregular
representation is unitarily equivalent to the following direct integral
decomposition:
\[
\int_{\Sigma^{\circ}}^{\oplus}\left(  \int_{\left(  \mathfrak{k/z\left(
g\right)  }\mathfrak{\cap h}\right)  ^{\ast}}^{\oplus}\mathrm{Ind}_{NK}%
^{NH}\left(  \widehat{\pi}_{\lambda}\otimes\chi_{\overline{\sigma}}\right)
\otimes1_{\mathbb{C}^{\mathbf{m}}}d\overline{\sigma}\right)  \left\vert
\mathbf{Pf}\left(  \lambda\right)  \right\vert d\lambda,
\]
with multiplicity function $\mathbf{m}$ equal to $2^{\dim\mathcal{X}}\text{ if
}\mathrm{rank}\left(  \phi_{x}\right)  =\dim\mathcal{X},$ or infinite otherwise.
\end{theorem}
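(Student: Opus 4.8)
The plan is to assemble the decomposition directly from Lipsman's disintegration formula (Lemma \ref{Ronald}), feeding in the orbital data and the fiberwise decomposition established in the preceding sections. First I would check that the hypotheses of Lemma \ref{Ronald} hold: $N$ is nilpotent, hence unimodular; the dual space $\widehat{N}/H$ is countably separated because the smooth cross-section $\Sigma^{\circ}$ of Proposition \ref{proposition18} provides a Borel parametrization of the $H$-orbits in $\Lambda_{\nu}$; and the extension $\widetilde{\pi}_{\lambda}$ of Lemma \ref{claim} is type I since $G$ is exponential solvable. With these in place, Lemma \ref{Ronald} gives
\[
\tau \simeq \int_{\widehat{N}/H}^{\oplus}\int_{\widehat{H_{\gamma}}}^{\oplus}\pi_{\gamma,\sigma}\otimes 1_{\mathbb{C}^{n_{\gamma}(\sigma)}}\, d\mu_{\gamma}(\sigma)\, d\dot{\mu}_{N}(\gamma),
\]
and the task reduces to identifying each ingredient with the quantities in the statement.

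For the base space, Kirillov's map together with Proposition \ref{proposition18} identifies $\widehat{N}/H$ with $\Sigma^{\circ}$, so a generic $\gamma$ is the irreducible $\widehat{\pi}_{\lambda}$ attached to $\lambda\in\Sigma^{\circ}$, and its stabilizer $H_{\gamma}$ is the constant subgroup $K$. By Lemma \ref{claim} the irreducibles of $G$ lying over the orbit of $\lambda$ are precisely $\pi_{\gamma,\sigma}=\mathrm{Ind}_{NK}^{NH}(\widetilde{\pi}_{\lambda}\otimes\chi_{\overline{\sigma}})$. The fiber decomposition of $\widetilde{\pi}_{\lambda}$ entering Lemma \ref{Ronald} is the decomposition of its restriction $\gamma_{\lambda}$ to $K$, which is exactly Lemma \ref{lemma27}:
\[
\gamma_{\lambda}\simeq\int_{(\mathfrak{k/h\cap z(g)})^{\ast}}^{\oplus}\chi_{\overline{\sigma}}\otimes 1_{\mathbb{C}^{\mathbf{m}}}\, d\overline{\sigma}.
\]
This reads off $\mu_{\gamma}$ as Lebesgue measure on $(\mathfrak{k/h\cap z(g)})^{\ast}$ and the multiplicity $n_{\gamma}(\sigma)=\mathbf{m}$, where by Lemma \ref{orbit} the constant $\mathbf{m}$ equals the number of points in the cross-section $\mathcal{X}/K$, namely $2^{\dim\mathcal{X}}$ when $\mathrm{rank}(\phi_{x})=\dim\mathcal{X}$ and infinite otherwise.

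For the base measure, the push-forward $\dot{\mu}_{N}$ of the Plancherel measure of $N$ under the quotient $\Lambda_{\nu}\to\Sigma^{\circ}$ is precisely the disintegration of Proposition \ref{planchereldecomp}: integrating out the orbital fibers $H/K$ leaves $|\mathbf{Pf}(\lambda)|\, d\lambda$ on $\Sigma^{\circ}$. Substituting all of these data into Lipsman's formula produces the asserted direct integral. The one point demanding genuine care is the passage from the full fiber $\widehat{H_{\gamma}}=\widehat{K}$ appearing in Lemma \ref{Ronald} to the smaller parameter space $(\mathfrak{k/h\cap z(g)})^{\ast}$; this is legitimate because $\gamma_{\lambda}$ is trivial exactly on $H\cap Z(G)$ (established earlier, with $H\cap Z(G)\subseteq K$), so $\mu_{\gamma}$ is supported on those characters of $K$ that descend to $K/(H\cap Z(G))$ and only these contribute. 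I expect this measure-matching, rather than any of the underlying algebra, to be the main obstacle in a fully rigorous write-up, the remainder being a bookkeeping substitution into the formula of Lemma \ref{Ronald}.
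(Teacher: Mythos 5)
Your proposal is correct and follows essentially the same route as the paper, which simply cites Lemma \ref{plancherel decomp}, Lemma \ref{lemma27}, and Lipsman's Lemma \ref{Ronald} and leaves the assembly to the reader; you have filled in exactly that assembly, including the hypothesis checks and the identification of the fiber measure and multiplicity. The one point you flag as delicate (the support of $\mu_{\gamma}$ on characters descending to $K/(H\cap Z(G))$) is indeed the only substantive detail the paper glosses over, and your justification of it via the triviality of $\gamma_{\lambda}$ on $H\cap Z(G)$ is the intended one.
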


The proof for Theorem \ref{decomposition of quasi regular representation}
follows from Lemma \ref{plancherel decomp}, Lemma \ref{lemma27}, and Theorem
\ref{Ronald} which is proved in Theorem 7.1 \cite{Lipsman}.

\begin{proposition}
The quasiregular representation $\tau=\mathrm{Ind}_{H}^{G}\left(  1\right)  $
is contained in the left regular representation if and only if $\dim(Z\left(
G\right)  \cap H)=0$.
\end{proposition}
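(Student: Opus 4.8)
The plan is to compare the two direct integral decompositions already in hand---that of $L$ from Lemma \ref{claim} and that of $\tau$ from Theorem \ref{decomposition of quasi regular representation}---by invoking the containment criterion of Lemma \ref{irr}. Both representations are disintegrated over the \emph{same} family of irreducibles $\mathrm{Ind}_{NK}^{NH}(\widetilde{\pi}_{\lambda}\otimes\chi_{\sigma})$ of $G$, which are naturally parametrized by pairs $(\lambda,\sigma)$ with $\lambda\in\Sigma^{\circ}$ and $\sigma$ a character of $K$. The key observation is the difference in the character variable: in the decomposition of $L$ the parameter $\sigma$ ranges over all of $\mathfrak{k}^{\ast}$, whereas in the decomposition of $\tau$ the parameter $\overline{\sigma}$ ranges only over $(\mathfrak{k/h\cap z(g)})^{\ast}$, which I identify with the subspace of those functionals in $\mathfrak{k}^{\ast}$ that annihilate $\mathfrak{h}\cap\mathfrak{z(g)}$.

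With this identification, the spectral measure of $\tau$ is concentrated on the subset $\Sigma^{\circ}\times(\mathfrak{k/h\cap z(g)})^{\ast}$ of $\Sigma^{\circ}\times\mathfrak{k}^{\ast}$, while by Lemma \ref{claim} the spectral measure of $L$ is equivalent to Lebesgue measure on the full product $\Sigma^{\circ}\times\mathfrak{k}^{\ast}$. Lemma \ref{irr} then reduces the question to two conditions: absolute continuity of the measure of $\tau$ with respect to that of $L$, and the pointwise multiplicity inequality $\mathbf{m}\le\dim L^{2}(H/K,\mathcal{H}_{\lambda})$ almost everywhere.

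I would first dispose of the multiplicity condition. Since $N$ is non-commutative and $\pi_{\lambda}$ is a generic infinite-dimensional irreducible, $\dim\mathcal{H}_{\lambda}=\infty$, so the fiber multiplicity of $L$ is $\dim L^{2}(H/K,\mathcal{H}_{\lambda})=\infty$; hence the multiplicity $\mathbf{m}$ of $\tau$---whether $2^{\dim\mathcal{X}}$ or infinite---never exceeds it, and containment is governed entirely by absolute continuity. Now if $\dim(Z(G)\cap H)=0$, then $\mathfrak{h}\cap\mathfrak{z(g)}=\{0\}$ and $(\mathfrak{k/h\cap z(g)})^{\ast}=\mathfrak{k}^{\ast}$, so both measures live on the same full product and are each equivalent to Lebesgue measure; absolute continuity holds and $\tau$ is contained in $L$. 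Conversely, if $\dim(Z(G)\cap H)\neq0$, then $(\mathfrak{k/h\cap z(g)})^{\ast}$ is a proper linear subspace of $\mathfrak{k}^{\ast}$ of positive codimension, hence Lebesgue-null; the measure of $\tau$ is therefore carried by a set of $L$-measure zero, absolute continuity fails, and by Lemma \ref{irr} no containment is possible.

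The main obstacle I anticipate is the bookkeeping in the first step---verifying that the irreducibles $\widehat{\pi}_{\lambda}\otimes\chi_{\overline{\sigma}}$ appearing in $\tau$ genuinely coincide, as representations of $G$ after applying $\mathrm{Ind}_{NK}^{NH}$, with the members $\widetilde{\pi}_{\lambda}\otimes\chi_{\sigma}$ of the Plancherel list of $L$, and that the character parameter $\overline{\sigma}$ is correctly realized as a character of $K$ trivial on $H\cap Z(G)$. This is precisely the content of Lemma \ref{lemma27}, where $\gamma_{\lambda}$ was shown to decompose over characters of $K/(H\cap Z(G))$; once the two parameter spaces are aligned in this way, the measure-theoretic comparison above is routine.
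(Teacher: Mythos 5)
Your proposal is correct and follows essentially the same route as the paper: both compare the decomposition of $L$ from Lemma \ref{claim} with that of $\tau$, observe that the spectral measure of $\tau$ lives on $\Sigma^{\circ}\times(\mathfrak{k}/\mathfrak{h}\cap\mathfrak{z(g)})^{\ast}$, which is Lebesgue-null in $\Sigma^{\circ}\times\mathfrak{k}^{\ast}$ exactly when $\dim(Z(G)\cap H)>0$, and note that the uniform infinite multiplicity of $L$ makes the multiplicity inequality of Lemma \ref{irr} automatic. The only cosmetic difference is that you cite Lemma \ref{irr} explicitly where the paper phrases the same conclusion in terms of mutual singularity of the two measures.
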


\begin{proof}
Assume that $Z\left(  G\right)  \cap H$ is not equal to the trivial group
$\{1\}.$ We have proved that $\gamma_{\lambda}\simeq\int_{\left(
\mathfrak{k}/(\mathfrak{z}\left(  \mathfrak{g}\right)  \cap\mathfrak{h}%
\right)  )^{\ast}}^{\oplus}\chi_{\sigma}\otimes1_{\mathbb{C}^{\mathbf{m}%
\left(  \sigma\right)  }}d\overline{\sigma}.$ By Proposition
\ref{plancherel decomp} and also, Theorem 3.1 in \cite{Lispman2}, we have
\[
\tau\simeq\int_{\Sigma^{\circ}}^{\oplus}\int_{\left(  \mathfrak{k}%
/(\mathfrak{z}\left(  \mathfrak{g}\right)  \cap\mathfrak{h}\right)  )^{\ast}%
}^{\oplus}\mathrm{Ind}_{NK}^{NH}\left(  \widetilde{\pi}_{\lambda}\otimes
\chi_{\sigma}\right)  \otimes1_{\mathbb{C}^{\mathbf{m}\left(  \lambda
,\sigma\right)  }}d\overline{\sigma} d\lambda.
\]
The measure $d\overline{\sigma}$ is a measure belonging to the Lebesgue class
measure on $\left(  \mathfrak{k}/\mathfrak{z}\left(  \mathfrak{g}\right)
\cap\mathfrak{h}\right)  ^{\ast}$, which we identify with $\mathbb{R}%
^{\dim(\mathfrak{k}/(\mathfrak{z}\left(  \mathfrak{g}\right)  \cap
\mathfrak{h})}$. The Plancherel measure of the group $G$ is supported on
$\Sigma^{\circ}\times\mathfrak{k}^{\ast}$ and belongs to the Lebesgue class
measure $d\lambda d\sigma$ such that $d\sigma$ is the Lebesgue measure on
$\mathfrak{k}^{\ast}=\mathbb{R}^{\dim(\mathfrak{k})}$. Clearly, if
$\dim(Z\left(  G\right)  \cap H)>0$, then $\mathbb{R}^{\dim(\mathfrak{k}%
/(\mathfrak{z}\left(  \mathfrak{g}\right)  \cap\mathfrak{h})}$ is meagre in
$\mathbb{R}^{\dim(\mathfrak{k})}$. Thus, the measure occurring in the
decomposition of the quasiregular representation, and the measure occurring in
the decomposition of the left regular representation are mutually singular if
and only if $\dim(Z\left(  G\right)  \cap H)>0$. Finally, we have
\begin{align*}
L  &  \simeq\int_{\Sigma^{\circ}}^{\oplus}\int_{\mathfrak{k}^{\ast}}^{\oplus
}\mathrm{Ind}_{NK}^{NH}\left(  \widetilde{\pi}_{\lambda}\otimes\chi_{\sigma
}\right)  \otimes1_{L^{2}\left(  H/K,\mathcal{H}_{\lambda}\right)  }\text{
}d\sigma d\lambda\\
&  \simeq\int_{\Sigma^{\circ}}^{\oplus}\int_{\mathfrak{k}^{\ast}}^{\oplus
}\mathrm{Ind}_{NK}^{NH}\left(  \widetilde{\pi}_{\lambda}\otimes\chi_{\sigma
}\right)  \otimes1_{\mathbb{C}^{\infty}}\text{ }d\sigma d\lambda.
\end{align*}
Since the irreducible representations occurring in the decomposition of $L$
have uniform infinite multiplicities, the quasiregular representation
$\tau=\mathrm{Ind}_{H}^{G}1$ is contained in the left regular representation
if and only if $\dim(Z\left(  G\right)  \cap H)=0$.
\end{proof}

Finally we have our main result.

\begin{theorem}
Assume that $G=N\rtimes H$ is unimodular. Then $\tau$ is never admissible.
Assume that $G$ is nonunimodular. $\tau$ is admissible if and only if
$\dim\left(  Z\left(  G\right)  \cap H\right)  =0.$
\end{theorem}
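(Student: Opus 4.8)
The plan is to treat the two regimes of the statement separately, in each case reducing the admissibility question to the containment $\tau \hookrightarrow L$ that was already characterized in the preceding Proposition. The observation that drives everything, and which is valid regardless of unimodularity, is that admissibility of $\tau$ is not merely abstract: if $\phi$ is an admissible vector, then the map $W_{\phi}\colon L^{2}(N)\to L^{2}(G)$, $W_{\phi}\psi(x)=\langle \psi,\tau(x)\phi\rangle$, intertwines $\tau$ with the left regular representation $L$ and is by hypothesis an isometry, so its range realizes $\tau$ as a subrepresentation of $L$. Consequently, in \emph{both} cases, admissibility of $\tau$ already forces $\dim(Z(G)\cap H)=0$ by the preceding Proposition. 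The real content of the theorem is therefore to decide in each regime whether this necessary condition is also sufficient.

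For the nonunimodular case this is immediate from the machinery assembled above. By Lemma \ref{lem5}, when $G$ is nonunimodular $\tau$ is admissible if and only if $\tau$ is equivalent to a subrepresentation of $L$, and by the preceding Proposition the latter holds if and only if $\dim(Z(G)\cap H)=0$; composing the two equivalences yields the asserted criterion. I would only make explicit that the direct integral decompositions of $\tau$ in Theorem \ref{decomposition of quasi regular representation} and of $L$ in Lemma \ref{claim} run over the same family $\mathrm{Ind}_{NK}^{NH}(\widetilde{\pi}_{\lambda}\otimes\chi_{\sigma})$, so that Lemma \ref{irr} applies with $\mu_{\tau}\ll\nu_{G}$ and with multiplicity $\mathbf{m}\le\infty$, which is exactly what turns the containment into a genuine equivalence rather than a one-sided implication.

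The delicate case, and the one I expect to be the main obstacle, is $G$ unimodular, where I claim $\tau$ is never admissible. Here Lemma \ref{lem5} is simply unavailable, so $\tau\hookrightarrow L$ no longer suffices and one must rule out admissibility directly. The structural fact to exploit is that the spectral measure of $\tau$ produced in Theorem \ref{decomposition of quasi regular representation} lies in the Lebesgue class $|\mathbf{Pf}(\lambda)|\,d\lambda\,d\overline{\sigma}$ on $\Sigma^{\circ}\times(\mathfrak{k}/\mathfrak{z(g)}\cap\mathfrak{h})^{\ast}$, hence is purely continuous and charges no atom; equivalently, since the Plancherel measure $\nu_{G}$ of $G$ is itself continuous on this parameter set, none of the irreducibles $\mathrm{Ind}_{NK}^{NH}(\widetilde{\pi}_{\lambda}\otimes\chi_{\sigma})$ occurring in $\tau$ is square-integrable modulo the centre. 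I would then invoke the unimodular half of the same theory of F\"uhr that underlies Lemma \ref{lemma1}: for a unimodular type I group, an admissible representation must have its Plancherel-spectral measure supported on the discrete series, i.e.\ on atoms of $\nu_{G}$, because when $G$ is unimodular the Duflo--Moore operators reduce to scalars and furnish no compensating unbounded factor of the kind that makes $L$ itself admissible in the nonunimodular regime. Since $\mu_{\tau}$ has no atomic part, $\tau$ can carry no admissible vector. The crux is precisely this last step: articulating, through the Plancherel formula and the scalar nature of the Duflo--Moore operators, why isometric containment in $L$ cannot be upgraded to admissibility in the absence of discrete series --- the very phenomenon that Lemma \ref{lemma1} records for $L$ in the special case $\tau=L$.
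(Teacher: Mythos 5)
Your reduction of both regimes to the containment $\tau\hookrightarrow L$, and your treatment of the nonunimodular case via Lemma \ref{lem5} together with the preceding Proposition, are correct and essentially identical to the paper's argument. The problem is the unimodular case, where your key claim is false: for a unimodular type I group it is \emph{not} true that an admissible representation must have spectral measure concentrated on atoms of the Plancherel measure. The criterion recorded in F\"uhr (and quoted in the paper's proof) is that a subrepresentation $\pi<L$ with $\pi\simeq\int^{\oplus}\sigma\otimes 1_{\mathbb{C}^{m(\sigma)}}\,d\mu(\sigma)$ is admissible if and only if $\int m(\sigma)\,d\mu(\sigma)<\infty$; because the Duflo--Moore operators are trivial, the admissibility condition forces $\widehat{\phi}(\sigma)$ to be a coisometry of rank $m(\sigma)$, so the Hilbert--Schmidt (i.e.\ $L^2$) requirement on $\phi$ is exactly finiteness of that integral --- no atomicity is involved. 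A concrete counterexample to your claim: for $G=\mathbb{R}$, the subrepresentation of $L$ acting on a band-limited subspace whose frequency support has finite Lebesgue measure is admissible, although the spectral measure is purely continuous. So "continuous spectrum" alone cannot rule out admissibility, and your argument does not close the unimodular case.

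What the paper actually does there is show that $\int_{\Sigma}\mathbf{m}(\lambda,\sigma)\,d\mu(\lambda,\sigma)$ always diverges, by a dichotomy on the multiplicity $\mathbf{m}$ computed in Lemma \ref{orbit}. If $\mathbf{m}=\infty$ the divergence is immediate. If $\mathbf{m}<\infty$, then necessarily $K\neq\{1\}$ (a trivial stabilizer gives $\mathbf{m}=\infty$ since each point of $\mathcal{X}$ is its own orbit), hence $\mathfrak{k}^{\ast}\cong\mathbb{R}^{\dim\mathfrak{k}}$ is a nontrivial factor of the parameter space $\Sigma=\Sigma^{\circ}\times\mathfrak{k}^{\ast}$; and since the Pfaffian density $\left\vert\mathbf{Pf}_{\mathbf{e}}(\lambda,\sigma)\right\vert$ depends only on $\lambda$, the integral over $\mathfrak{k}^{\ast}$ of a positive constant function diverges. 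This structural input --- finite multiplicity forces a nontrivial little group, whose dual contributes infinite Plancherel mass --- is the missing idea; without it, or some substitute establishing $\int\mathbf{m}\,d\mu=\infty$, the unimodular half of the theorem is not proved.
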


\begin{proof}
First, assume that $G$ is unimodular. Clearly if $$\dim\left(  Z\left(
G\right)  \cap H\right)  =0$$ then, $\tau$ will be contained in the left
regular representation. However $G$ being unimodular, it is known (see
\cite{Fuhr}) that any subrepresentation of the left regular representation is
admissible if and only if
\begin{equation}
\label{integral}\int_{\Sigma}\mathbf{m}\left(  \lambda,\sigma\right)
d\mu(\lambda,\sigma) <\infty.
\end{equation}
However that is not possible because, the multiplicity is constant a.e.,
$\mathbf{m}\left(  \lambda,\sigma\right)  =\mathbf{m}$%
\begin{align*}
\int_{\Sigma}\mathbf{m}\left(  \lambda,\sigma\right)  d\mu\left(
\lambda,\sigma\right)   &  =\int_{\Sigma}\mathbf{m\cdot}\text{ }d\mu\left(
\lambda,\sigma\right) \\
&  =\mathbf{m\cdot}\text{ }\mu\left(  \Sigma\right)  .
\end{align*}
Now for the first case. Assume that $\mathbf{m}$ is infinite, then clearly,
the integral will diverge. For the second case, assume that $\mathbf{m}$ is
finite. Then, there exists at least a non trivial $k\in\mathfrak{k}$ such that
$\Sigma=\Sigma^{\circ}\times\mathfrak{k}^{\ast}$ and, using Currey's measure(\cite{Plancherel}), up to multiplication by a constant,
\[
d\mu\left(  \lambda,\sigma\right)  =\left\vert \mathbf{Pf}_{\mathbf{e}}\left(
\lambda,\sigma\right)  \right\vert d\lambda d\sigma.
\]
where $
\mathbf{Pf}_{\mathbf{e}}\left(  \lambda,\sigma\right)  =\det\left(\left(  \lambda,\sigma\right)  \left[  Z_{i_{r}},Z_{j_{s}}\right]
\right)_{1\leq r,s\leq \mathbf{d}}.$ It is thus clear from the definition of the action of $H$, that the function
$\mathbf{Pf}_{\mathbf{e}}\left(  \lambda,\sigma\right)  $ is really a function
of $\lambda.$ Thus, we just write $\mathbf{Pf}_{\mathbf{e}}\left(
\lambda,\sigma\right)  =\mathbf{Pf}_{\mathbf{e}}\left(  \lambda\right)  $ and
\begin{align*}
\int_{\Sigma}\mathbf{m}\left(  \lambda,\sigma\right)  d\mu\left(
\lambda,\sigma\right)   &  =\mathbf{m}\int_{\Sigma^{\circ}}\int_{\mathfrak{k}%
^{\ast}}\text{ }\left\vert \mathbf{Pf}_{\mathbf{e}}\left(  \lambda\right)
\right\vert d\lambda d\sigma\\
&  =\mathbf{\infty.}%
\end{align*}
If $G$ is unimodular and $\dim\left(  Z\left(  G\right)  \cap H\right)  >0$
then, $\tau$ must be disjoint from the left regular representation. Now assume
that $G$ is nonunimodular. We have 2 different cases. If $\dim\left(  Z\left(
G\right)  \cap H\right)  >0$ then the quasiregular representation is disjoint
from the left regular representation which automatically prevents $\tau$ from
being admissible. Secondly, assume that $\dim\left(  Z\left(  G\right)  \cap
H\right)  =0.$ We have
\[
\tau\simeq\int_{\Sigma^{\circ}}^{\oplus}\int_{\mathfrak{k}^{\ast}}^{\oplus
}\mathrm{Ind}_{NK}^{NH}\left(  \widetilde{\pi}_{\lambda}\otimes\chi_{\sigma
}\right)  \otimes1_{\mathbb{C}^{\mathbf{m}\left(  \lambda,\sigma\right)  }%
}d\sigma d\lambda,
\]
and of course, as seen previously, the multiplicity function is uniformly
constant and, $\mathbf{m}\left(  \lambda,\sigma\right)  \leq\infty.$ Thus,
$\tau$ is quasi-equivalent with the left regular representation. $G$ being
nonunimodular, it follows that $\tau$ is admissible.
\end{proof}

\begin{remark}
We bring the attention of the reader to the fact that the theorem above
supports Conjecture 3.7 in \cite{CV} which states that a monomial
representation of a unimodular exponential solvable Lie group $G$ never has
admissible vectors. The general case remains an open problem.
\end{remark}
Based on our main theorem, we can assert the following.
\begin{remark}
 Let $N$ be a nilpotent Lie group with Lie algebra $\mathfrak{n}.$ Let $H$ be given such that at least one of the basis element of $\mathfrak{h}$ commutes with all basis elements of $\mathfrak{n}.$ Then $Z(N\rtimes H)\cap H$ is clearly non trivial, and $\tau$ cannot be admissible as a representation of $G$.  \end{remark}

\section{Examples}
In this section, we will present several examples, and we will show how to apply our results in order to settle the admissibility of $\tau$ in each case. 

\begin{example} Coming back to Example \ref{ex}, clearly $G$ is not unimodular. Since the center of the group has a non-trivial intersection with $H$ then $\tau$ is not an admissible representation. \end{example}

\begin{example} Recall Example \ref{ex2}. Since $G$ is nonunimodular and since the center of the group is trivial, then $\tau$ is an admissible representation of $G.$
\end{example}
\begin{example}
Let $G$ a Lie group with Lie algebra $\mathfrak{g}$ spanned by $\left\{
Z,Y,X,A_{1},A_{2},A_{3}\right\}  $ such that
\begin{align*}
\left[  X,Y\right]    & =Z,\left[  A_{1},X\right]  =X,\\
\left[  A_{2},X\right]    & =X,\left[  A_{3},X\right]  =2X\\
\left[  A_{1},Y\right]    & =Y,\left[  A_{2},Y\right]  =-Y,\\
\left[  A_{3},Y\right]    & =-Y,\left[  A_{1},Z\right]  =Z,\\
\left[  A_{3},Z\right]    & =Z.
\end{align*}
Since the center of $G$ is equal to
\[
\exp\left(
\mathbb{R}
\left(  -\frac{1}{2}A_{1}-\frac{3}{2}A_{2}+A_{3}\right)  \right)  <H
\]
then $\tau$ is not admissible.
\end{example}

\begin{example}
Let $G$ a Lie group with Lie algebra $\mathfrak{g}$ spanned by $$\left\{
Z,Y,X,W,A_{1},A_{2},A_{3},A_{4}\right\}  $$ with non-trivial Lie brackets
\begin{align*}
\left[  X,Y\right]    & =Z,\left[  W,X\right]  =Y,\\
\left[  A_{1},W\right]    & =\frac{1}{3}W,\left[  A_{1},X\right]  =\frac{1}%
{3}X,\\
\left[  A_{1},Y\right]    & =\frac{2}{3}Y,\left[  A_{1},Z\right]  =Z\\
\left[  A_{2},W\right]    & =-W,\left[  A_{2},X\right]  =X,\\
\left[  A_{2},Z\right]    & =Z,\left[  A_{3},W\right]  =1/5W,\\
\left[  A_{3},X\right]    & =2/5X,\left[  A_{2},Y\right]  =3/5Y,\\
\left[  A_{3},Z\right]    & =Z,\left[  A_{4},X\right]  =1/2X,\\
\left[  A_{4},Y\right]    & =1/2Y,\left[  A_{4},Z\right]  =Z.
\end{align*}
In this example the Lie algebra $\mathfrak{h}$ is spanned by the vectors $A_1,A_2,A_3,A_4.$ The center of $G$ is equal to%
\[
\exp\left(
\mathbb{R}
\left(  -\frac{9}{10}A_{1}-\frac{1}{10}A_{2}-A_{3}\right)  \right)
\exp\left(
\mathbb{R}
\left(  -\frac{3}{4}A_{1}-\frac{1}{4}A_{2}+A_{4}\right)  \right)  <H
\]
then $\tau$ is not admissible
\end{example}
\begin{example} Let us suppose that $\mathfrak{g}$ is spanned by the vectors $$U_1,U_2,Z_1,Z_2,Z_3,X_1,X_2,X_3,A$$ and $\mathfrak{h}$ is spanned by the vector $A$. Furthermore, assume that we have the following non-trivial Lie brackets $$[X_3,X_2]=Z_1, [X_3,X_1]=Z_2, [X_2,X_1]=Z_3,[A,U_1+i U_2]=(1+i)(U_1+i U_2).$$ We remark that in this example, the nilradical of $\mathfrak{g}$ is a step-two freely generated nilpotent Lie algebra with $3$ generators. Since $G$ is nonunimodular, and since the center of $G$ is trivial, then $\tau$ is admissible.
\end{example}

\begin{example} 
Let $N$ be the Heisenberg group%
\[
N=\left\{  \left(
\begin{array}
[c]{cccc}%
1 & x & y & z\\
0 & 1 & 0 & y\\
0 & 0 & 1 & 0\\
0 & 0 & 0 & 1
\end{array}
\right)  :\left(
\begin{array}
[c]{c}%
z\\
y\\
x
\end{array}
\right)  \in%
\mathbb{R}
^{3}\right\}  ,
\]
and the dilation group $H$ is isomorphic to $%
\mathbb{R}
^{2}$ such that
\[
H=\left\{  \left(
\begin{array}
[c]{cccc}%
e^{t} & 0 & 0 & 0\\
0 & e^{t-r} & 0 & 0\\
0 & 0 & e^{r} & 0\\
0 & 0 & 0 & 1
\end{array}
\right)  :\left(
\begin{array}
[c]{c}%
t\\
r
\end{array}
\right)  \in%
\mathbb{R}
^{2}\right\}  .
\]
The action of $H$ on $N$ is given as follows.
\begin{align*}
& \left(
\begin{array}
[c]{cccc}%
e^{t} & 0 & 0 & 0\\
0 & e^{t-r} & 0 & 0\\
0 & 0 & e^{r} & 0\\
0 & 0 & 0 & 1
\end{array}
\right)  \left(
\begin{array}
[c]{cccc}%
1 & x & y & z\\
0 & 1 & 0 & y\\
0 & 0 & 1 & 0\\
0 & 0 & 0 & 1
\end{array}
\right)  \left(
\begin{array}
[c]{cccc}%
e^{t} & 0 & 0 & 0\\
0 & e^{t-r} & 0 & 0\\
0 & 0 & e^{r} & 0\\
0 & 0 & 0 & 1
\end{array}
\right)  ^{-1}\\
& =\left(
\begin{array}
[c]{cccc}%
1 & xe^{r} & ye^{t}e^{-r} & ze^{t}\\
0 & 1 & 0 & ye^{t-r}\\
0 & 0 & 1 & 0\\
0 & 0 & 0 & 1
\end{array}
\right)  \allowbreak
\end{align*}
It is easy to see that the Lie algebra of $G$ is spanned by $\left\{  Z,Y,X,A\right\}
$ with non-trivial Lie brackets%
\begin{align*}
\left[  A_{1},Z\right]    & =Z,\left[  A_{1},Y\right]  =Y\\
\left[  A_{2},Y\right]    & =-Y,\left[  A_{2},X\right]  =X.
\end{align*}
Here
\[
K=\left\{  \left(
\begin{array}
[c]{cccc}%
1 & 0 & 0 & 0\\
0 & e^{-r} & 0 & 0\\
0 & 0 & e^{r} & 0\\
0 & 0 & 0 & 1
\end{array}
\right)  :r\in%
\mathbb{R}
\right\}
\]
but the center of the $G$ is trivial. Thus, there is a non-trivial subgroup of the dilation group stabilizing the center of $N$
and thus stabilizing almost all of elements of the unitary dual of $N.$ The
spectrum of the left regular representation of $G=N\rtimes H$ is supported on
the two disjoint lines, and the irreducible representations occurring in
decomposition of the left regular representation occur with infinite
multiplicities. Also, the spectrum of the quasiregular representation $\tau$
is parametrized by two disjoint lines, but the irreducible representations
occurring in the decomposition of $\tau$ occur twice almost everywhere. Since
the group $G$ is nonunimodular, and $\tau$ is contained in $L$ then $\tau$ is admissible. 
\end{example}

\begin{example}
Let us suppose that $\mathfrak{n}$ is spanned by $T_{1},T_{2},Z,Y,X$ such that
$\left[  X,Y\right]  =Z,$ $\mathfrak{h}$ is spanned by $A_{1},$ $A_{2}%
,A_{3},A_{4},A_{5}$ such that
\begin{align*}
\left[  A_{2},X\right]    & =1/2X,\\
\left[  A_{2},Y\right]    & =1/2Y,\\
\left[  A_{2},Z\right]    & =Z,\\
\left[  A_{3},X\right]    & =X\\
\left[  A_{3},Y\right]    & =-Y\\
\left[  A_{5},X\right]    & =X,\left[  A_{6},Y\right]  =Y\\
\left[  A_{3},T_{1}+iT_{1}\right]    & =\left(  1+i\right)  \left(
T_{1}+iT_{1}\right)  \\
\left[  A_{4},T_{1}+iT_{1}\right]    & =\left(  2+2i\right)  \left(
T_{1}+iT_{1}\right)  \\
\left[  A_{1},T_{1}+iT_{1}\right]    & =\left(  1+i\right)  \left(
T_{1}+iT_{1}\right).
\end{align*}
The center of $G$ is given by%
\[
\exp\left(
\mathbb{R}
\left(  A_{1}-2A_{2}-1/2A_{4}\right)  \right)  \exp\left(
\mathbb{R}
\left(  A_{3}-1/2A_{4}-A_{5}\right)  \right)  <H.
\]
Thus $\tau$ is not admissible.
\end{example}

 The author is grateful to the anonymous referee for his careful reading, comments, corrections, and helpful suggestions


\begin{thebibliography}{99}
\bibitem {Ali}S. T. Ali, J-P. Antoine and J-P. Gazeau, Coherent States,
Wavelets and Their Generalizations. Springer-Verlag, New York, 2000.
\bibitem {Didier} Arnal, Didier, B. Currey, B. Dali, Construction of
canonical coordinates for exponential Lie groups. Trans. Amer. Math. Soc. 361
(2009), no. 12, 6283--6348
\item A. Calogero, A characterization of wavelets on general lattices, J. Geom. Anal. 10 (2000), no. 4, 597–-622
\bibitem{Cordero} E. Cordero, A. Tabacco, Triangular subgroups of $Sp(d,R)$ and reproducing formulae, Journal of Functional Analysis
Volume 264, Issue 9, (2013)
\bibitem {Currey}B. N. Currey, An explicit Plancherel
formula for completely solvable Lie groups, Michigan Math. J. 38:1 (1991),
75--87
\bibitem {Plancherel}B. Currey, Explicit orbital parameters and the Plancherel measure for exponential Lie groups. Pacific J. Math. 219 (2005), no. 1, 97–-138.
\bibitem {Currey 3}B.N. Currey, Decomposition and multiplicities for quasiregular representations of algebraic solvable Lie groups. J. Lie Theory 19 (2009), no. 3, 557–-612
\bibitem {Currey 4}B.N. Currey, Admissibility for a class of quasiregular representations. Canad. J. Math. 59 (2007), no. 5, 917–942
\bibitem {CV}B.N. Currey, V. Oussa, Admissibility for monomial
representations of exponential Lie Groups, Journal of Lie Theory 22 (2012), No. 2, 481--487
\bibitem{Mari} F. De Mari, E. De Vito, Admissible vectors for mock metaplectic representations, Applied and Computational Harmonic Analysis, Volume 34, Issue 2, (2013), Pages 163–200 
\bibitem {Fuhr}H. F\" uhr,Abstract harmonic analysis of continuous wavelet transforms. Lecture Notes in Mathematics, 1863. Springer-Verlag, Berlin, 2005
\bibitem {Fuhr2}H. F\" uhr, Admissible vectors for the regular representation,
Proceedings of the American Mathematical Society Volume 130, Number 10 Pages 2959-2970
\bibitem{Fuhr3}H. F\" uhr, Generalized Calderón conditions and regular orbit spaces. Colloq. Math. 120 (2010), no. 1, 103–126
\bibitem {corwin}L. Corwin and P. Greenleaf, Representations of nilpotent Lie
groupes and their applications. Part 1: Basic theory and examples, Cambridge
Studies in Advanced Mathematics,18 (1990).
\bibitem {Fuijwara}H. Fujiwara, Representations monomiales des groupes de Lie
resolubles exponentiels, in The orbit method in representation theory, Proceedings of Fuji Kawaguchiko Conference, Ed. T. Kawazoe, T. Oshima and S. Sano, World Scientific, 1992, 140-150
\bibitem {kirilov}A.A. Kirillov, Unitary representations of nilpotent Lie groups, Uspekhi Mat. Nauk.17, 53-104, (1962).
\bibitem {Guido}R. Laugasen, N. Weaver, G. Weiss, E.N. Wilson, Continuous wavelets associated with a general class of admissible groups and their characterization, J. Geom. Anal. 12 (2002), 89
\bibitem {Lipsman}R. Lipsman, Orbital parameters for induced and restricted
representations Transactions of the American Mathematical Society, Volume 313,
Number 2, June 1989.
\bibitem {Lipsman 1}A. Kleppner, R. Lipsman The Plancherel formula for group extensions, Ann. Sci. Ec. Norm.Sup., 4e serie 5 (1972), 459--516
\bibitem {Lispman2}R. Lipsman, Harmonic analysis on exponential solvable homogeneous spaces: the algebraic or symmetric cases, Pac. Jour. Math. 140 (1989), 117-147
\bibitem {Lipsman3}R. Lipsman, Group Representations Lecture Notes in Math. 388, Springer, 1974.
\bibitem{Lipsman4} R. Lipsman, The Plancherel formula for homogeneous spaces with exponential spectrum. J. Reine Angew. Math. 500 (1998), 49–-63
\bibitem{Lipsman5} R. Lipsman,  Lipsman, Ronald L. Representations of exponential solvable Lie groups induced from maximal subgroups. Michigan Math. J. 40 (1993), no. 2, 299–-320
\bibitem{Lipsman6}  Lipsman, Ronald L. The multiplicity function on exponential and completely solvable homogeneous spaces. Geom. Dedicata 39 (1991), no. 2, 155–161
\bibitem {Liu}Liu, H. and Peng, L., Admissible wavelets associated with the Heisenberg group, Pac. Jour. Math. 180 (1997), 101-12
\bibitem {Mackey}G.W. Mackey Unitary representations of group extensions I, Acta Math. 99 (1958), 265-311.
\end{thebibliography}
\end{document}